\newtheorem{theorem}{Theorem}[section]
\newtheorem{thm}[theorem]{Theorem}
\newtheorem{lemma}[theorem]{Lemma}
\newtheorem{prop}[theorem]{Proposition}
\newtheorem{corollary}[theorem]{Corollary}
\theoremstyle{definition}
\newtheorem{definition}[theorem]{Definition}
\newtheorem{defi}[theorem]{Definition}
\newtheorem{remark}[theorem]{Remark}
\theoremstyle{plain}
\newtheorem*{namedthm}{\namedthmname}
\newcounter{namedthm}
 \newcommand{\R}{\mathbb R}
 \newcommand{\Q}{\mathbb Q}
 \newcommand{\C}{\mathbb C}
 \newcommand{\N}{\mathbb N}
 \newcommand{\vep}{\varepsilon}
 \newcommand{\capa}{{\rm Cap}}
 \newcommand \psh {{\rm PSH}}
 \newcommand \PSH {{\rm PSH}}
\newcommand{\id}{{\bf 1}}
 \newcommand \Amp {{\rm Amp}}
 \newcommand \rmE {{\rm E}}
 \newcommand \vol{{\rm Vol}}
 \newcommand \Ent{{\rm Ent}}
\newcommand{\Ric}{{\rm Ric}}
 \newcommand \cE{\mathcal E }
 \newcommand \mN{\mathcal N}
 \newcommand \cM {\mathcal{M}}
\definecolor{ao}{rgb}{0.0, 0.5, 0.0}
 \numberwithin{equation}{section}
\subjclass[2010]{32W20, 32U05, 32Q15, 35A23}
\keywords{K\"ahler manifolds, Monge-Amp\`ere  energy, entropy, big classes}
\begin{document}

\title[Convexity of the Mabuchi functional in the big setting]{Convexity of the Mabuchi functional in big cohomology classes}
\author{Eleonora Di Nezza, Stefano Trapani, Antonio Trusiani}

\date{\today}

\begin{abstract}
 We study the Mabuchi functional associated to a big cohomology class. We define an invariant associated to transcendental Fujita approximations, whose vanishing is related to the Yau-Tian Donaldson conjecture. Assuming vanishing (finiteness) of this invariant we establish (almost) convexity along weak geodesics. As an application, we give an explicit expression of the distance $d_p$ in the big setting for finite entropy potentials.
\end{abstract}

 \maketitle

\section{Introduction}
Let $X$ be a compact K\"{a}hler manifold of complex dimension $n$ and fix a K\"{a}hler form $\omega$. Let $d$ and $d^c$ be the real differential operators defined as $d:=\partial +\bar{\partial},\,  d^c:=\frac{i}{2\pi}\left(\bar{\partial}-\partial \right).$ 
  By the $d d^c$-lemma, the space 
 of K\"{a}hler forms cohomologus to $\omega$ can be identified with the space 
 $$ { \mathcal{H}}  =  \{ u \in \mathcal{C}^{\infty}(X) : \omega + dd^c u > 0 \} / \R.$$
To study canonical K\"{a}hler metrics on $X$, Mabuchi in \cite{Mab86}, \cite{Mab87} introduced a natural  Riemannian metric $g$ on $\mathcal{H}.$ He defined the Mabuchi functional $\mathcal{M}$ (known as well as K-energy) such that its critical points are constant scalar curvature K\"ahler (cscK for short) metrics. Furthermore, he demonstrated that the Mabuchi functional is convex along smooth geodesics of $(\mathcal{H}, g)$.  However, $(\mathcal{H}, g)$ is an infinite dimensional Fr\'echet Riemannian manifold, hence the existence of smooth geodesics is not guaranteed, as shown in \cite{LV13}, \cite{DL12}. Nevertheless, a natural notion of weak geodesics exists to connect two points in $\mathcal{H}$. In \cite{BB17}, Berman and Berndtsson proved convexity of the Mabuchi functional along such weak geodesics and, as a consequence they established the uniqueness of the cscK metric in a given cohomology class (whenever it exists).

 \smallskip

In the 1990's Tian  \cite{Tian94} made an influential conjecture stating that the existence of a cscK metric is equivalent to the properness of the Mabuchi functional. \\
There were several attempts by many in this direction. The conjecture was first proven in the (Fano) K\"ahler-Einstein case by Darvas and Rubinstein \cite{DR17}. The fact that the existence of a cscK metric implies the properness of the $K$-energy is due to Berman, Darvas and Lu \cite{BDL20}, while the reverse implication was proven more recently by Chen and Cheng \cite{ChCh1, ChCh2} (see also \cite{LTW21}, \cite{Li22}, \cite{PTT23}, \cite{PT24} for some results in the singular setting). 

Motivated by the classification problem in birational geometry, in \cite{BBGZ13}, the authors studied the K\"ahler-Einstein equation as a solution to a similar variational problem, but in the more singular context of a big cohomology class $\{\theta\}\in H^{1,1} (X, \R)$. We say that $\{\theta\}$ is big if and only if its volume $\vol(\theta)>0$ and we define this notion in Section \ref{sec:prelim}. 
The notion of big cohomology classes is in fact invariant under bimeromorphic maps, while this is not the case for K\"ahler classes, and naturally arises in algebraic geometry.

%\textcolor{red}{Recently, in \cite{DR24}, \cite{DZ24}, \cite{Xu23} the search of these Kähler-Einstein metrics in the anticanonical class has been further developed and connected to algebro-geometric notions and criteria.} Thus, it is a natural extension to study the Mabuchi functional in the setting of big cohomology classes.
  \smallskip

 In this paper we thus define and study the (relative) Mabuchi functional in a big cohomology class $\{\theta\}$. We recall that a function $\varphi: X\rightarrow \mathbb{R}\cup\{-\infty\}$ is quasi-plurisubharmonic (qpsh) if it can be locally written as the sum of a plurisubharmonic function and a smooth function. $\varphi$ is called $\theta$-plurisubharmonic  ($\theta$\emph{-psh}) if it is qpsh and $\theta_\varphi:=\theta+dd^c \varphi\geq 0$ in the sense of currents. We let $\psh(X,\theta)$ denote the set of $\theta$-psh functions that are not identically $-\infty$. Thanks to \cite{BEGZ10}, given a $\theta$-psh function $\varphi$, the \emph{non-pluripolar Monge-Amp{\`e}re measure} of $\varphi$ is well defined and denoted by 
$\theta_\varphi^n:=(\theta+dd^c\varphi)^n.$ In $\psh(X,\theta)$ there exists a ``best candidate" which is less singular than any other $\theta$-psh function:
 $$V_\theta:=\sup\{ u\in \psh(X,\theta), \, u\leq 0 \}.$$
 We then say that a $\theta$-psh function $\varphi$ has minimal singularities if it is relative bounded with respect to $V_\theta$, i.e. $|V_\theta-\varphi|\leq C$, for some $C>0$.\\
Also, we have that $\vol(\theta)= \int_X \theta_{V_\theta}^n$. Thus, the fact that $\{\theta\}$ is big means that the mass of (the Monge-Amp{\`e}re measure associated to) $V_\theta$ is strictly positive.
The function $V_\theta$ is then the less singular function with ``full mass". \\
For any other mass $0<m< \vol(\theta)$, one can consider \emph{model potentials} $\phi$ having that mass, $\int_X \theta_\phi^n=m$, which are going to play the role of $V_\theta$ in a relative setting. We refer to Section \ref{subsec env} for a precise definition.

 \smallskip
 
 Taking inspiration from the Kähler setting (and from the big and nef case studied by Di Nezza and Lu in \cite{DNL21}), we define the Mabuchi functional relative to $(X,\theta_\varphi)$, for any closed and positive $(1,1)$-current $\theta_\varphi$ with \emph{well defined Ricci curvature} (see Section \ref{sec: convex}), as
\begin{equation}\label{eqM intro}
 	\cM_{\theta, \varphi}(u) := \bar{S}_{\varphi} E(\theta; u,{\varphi}) - n E_{\Ric (\theta_\varphi)} (\theta; u,{\varphi}) +\Ent(u,\varphi),
\end{equation}
for any $u$ with the same singularity type of $\varphi$, that is $|u-\varphi|\leq C$, for some $C>0$. We refer to Sections \ref{sec: entr} and \ref{sec: convex} for the definitions of the energy terms $E, E_{\Ric(\theta_\varphi)}$ and the entropy term $\Ent$ appearing in \eqref{eqM intro}.

The main result of the paper is the (almost)-convexity of $\cM_{\theta, \varphi}$ along weak geodesic. Our strategy is to treat this problem as a limiting case of classes with ``prescribed singularities". The latter notion was introduced in \cite{DDL1}, \cite{DDL5}, \cite{DDL6}  and further developed in \cite{Trus19}, \cite{Trus20}, \cite{Trus20b}. 

To be more precise, we consider a \emph{monotone transcendental Fujita approximation} of $\{\theta\}$, i.e. a sequence of model 
potentials $(\phi_k)_k\subset \PSH(X,\theta) $, $\phi_k\nearrow V_\theta$ such that for any $k$, there exists a modification $\pi_k : Y_k \rightarrow X$, $Y_k$ compact K\"ahler manifold, and
$$ \pi_k^* (\theta+dd^c {\phi_k}) =(\eta_k+ dd^c{\tilde{\phi}_k}) +[F_k] $$
where $F_k$ is an effective $\R$-divisor, $\tilde{\phi}_k$ is a potential with minimal singularities and $\eta_k $ represents a big and nef class. The existence of a sequence $(\phi_k)_k$ is a consequence of \cite{Dem92} (see Lemma \ref{lem:Fujita}). We refer to Section \ref{sec:fuj} for more details.\\
There is a key quantity associated to a given monotone transcendental Fujita approximation $(\phi_k)_k$:
$$H(\phi_k):= \liminf_{k\to +\infty} \{\eta_k^{n-1}\}\cdot K_{Y_k/X}.$$
Note that $H(\phi_k)\geq 0$ and let us stress that it does not depend on the modifications $\pi_k$ (Lemma \ref{lem:Independence on modification}). We then consider $$H:= \inf\{ H(\phi_k), \, (\phi_k)_k \, {\rm\, monotone\; transcendental\;  Fujita \;approximation}\}.$$ 

Our main result states as follows:

 \begin{thm}
   Let $u_0,u_1\in\psh(X,\theta)$ with minimal singularities and let $(u_t)_{t\in [0,1]}$ be the weak geodesic connecting $u_0$ and $u_1$. Let $\varphi\in \cE(X,\theta)$ be such that $\theta_\varphi^n=\vol(\theta)\omega^n$, $\sup_X \varphi=0$. Then $u_t$ has minimal singularities and the function  $t\mapsto \cM_{\theta, \varphi}(u_t)$ is almost convex in $[0,1]$, i.e.
\begin{equation}\label{intro:Almost_Convexity_Statement_Thm}
    \cM_{\theta,\varphi}(u_t)\leq (1-t)\cM_{\theta,\varphi}(u_0)+t\cM_{\theta,\varphi}(u_1)+ \frac{n\lVert u_0-u_1\rVert_\infty }{2\vol(\theta)} H.
\end{equation}
\end{thm}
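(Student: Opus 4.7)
The strategy is to reduce \eqref{intro:Almost_Convexity_Statement_Thm} to the big-and-nef convexity of the Mabuchi functional from \cite{DNL21} by pulling back to the modifications $Y_k$ associated with a monotone transcendental Fujita approximation $(\phi_k)_k$, and then to take $k\to\infty$. The defect term arises because $\pi_k^*\Ric(\theta_\varphi) = \Ric(\pi_k^*\theta_\varphi) + [K_{Y_k/X}]$ --- here $\Ric(\theta_\varphi) = \Ric(\omega) - dd^c\log\vol(\theta)$ is smooth, since $\theta_\varphi^n = \vol(\theta)\omega^n$ is smooth and strictly positive --- so the Ricci-energy term on $X$ and its counterpart on $Y_k$ differ by a pairing against the relative canonical divisor. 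Minimal singularities of $u_t$ follow from a standard barrier argument: $(1-t)u_0 + tu_1 \geq V_\theta - C$ gives a lower bound, while $\max(u_0, u_1) \leq V_\theta + C$ provides an upper barrier (being constant in $t$, it is a competitor in the upper envelope defining the weak geodesic).

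For the core estimate, fix $(\phi_k)_k$ with $H(\phi_k) \to H$, and the associated modifications $\pi_k^*(\theta+dd^c\phi_k) = \eta_k + dd^c\tilde\phi_k + [F_k]$. Truncate $u_i^k := \max(u_i, \phi_k - M)$ with $M$ large, so that $u_i^k \nearrow u_i$ as $k\to\infty$. Lifting to $Y_k$ and subtracting $\tilde\phi_k$ and the current of integration along $F_k$ yields $\eta_k$-psh functions $\tilde u_i^k$ with minimal singularities in the big-and-nef class $\{\eta_k\}$; connect them by a weak geodesic $\tilde u_t^k$. By \cite{DNL21} the Mabuchi functional on $\{\eta_k\}$ is convex along $\tilde u_t^k$. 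When comparing with $\cM_{\theta,\varphi}(u_t^k)$, the entropy $\Ent$ and the energy $\bar S_\varphi E$ are bimeromorphically invariant by invariance of non-pluripolar products \cite{BEGZ10}, while the Ricci-energy terms differ by an integral pairing $\tilde u_t^k$ against $[K_{Y_k/X}]$ wedged with the appropriate Monge--Amp\`ere product. Integration by parts together with $\lVert \tilde u_0^k - \tilde u_1^k\rVert_\infty \leq \lVert u_0 - u_1\rVert_\infty$ bounds this discrepancy by $\frac{n\lVert u_0-u_1\rVert_\infty}{2\vol(\theta)}\{\eta_k^{n-1}\}\cdot K_{Y_k/X}$.

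To conclude, let $k\to\infty$. The pushforwards of $\tilde u_t^k + \tilde\phi_k$ agree with weak geodesics connecting $u_0^k, u_1^k$ on $X$ (by stability of the Dirichlet problem for the homogeneous Monge--Amp\`ere operator under prescribed-singularity approximations \cite{Trus19, Trus20}) and increase monotonically to $u_t$. Continuity of the $E$ and $E_{\Ric(\theta_\varphi)}$ terms under such monotone approximations with endpoints of full non-pluripolar mass $\vol(\theta)$, together with lower semi-continuity of $\Ent(\cdot,\varphi)$ in the direction favorable to a $\leq$ inequality, transfers the convexity from $Y_k$ to $X$ with error $\frac{n\lVert u_0-u_1\rVert_\infty}{2\vol(\theta)}H(\phi_k)$; taking $\liminf_k$ yields \eqref{intro:Almost_Convexity_Statement_Thm}. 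The main obstacle is controlling the Ricci-energy term under the double limit (the $M$-truncation and then $\phi_k \nearrow V_\theta$), where non-pluripolar integrals against singularly-weighted Monge--Amp\`ere products must converge; the key inputs are the smoothness of $\Ric(\theta_\varphi)$ and Lemma \ref{lem:Independence on modification}, the latter ensuring that $H(\phi_k)$ is intrinsic to the Fujita approximation, so one can choose $(\pi_k)$ compatibly with the analytic estimates without affecting the bound.
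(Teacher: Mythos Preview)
Your overall strategy matches the paper's: Fujita-approximate, lift to the big-and-nef class $\{\eta_k\}$ on $Y_k$, apply the convexity from \cite{DNL21}, identify the defect as a pairing against $K_{Y_k/X}$, then let $k\to\infty$. However, the key reduction step has a genuine gap. Your truncation $u_i^k := \max(u_i,\phi_k-M)$ goes in the wrong direction: since $u_0,u_1$ have minimal singularities, $u_i \geq V_\theta - C \geq \phi_k - C$, so for $M$ large the max is simply $u_i$. More to the point, to lift a $\theta$-psh function $u$ to an $\eta_k$-psh function on $Y_k$ via $u\mapsto (u-\phi_k)\circ\pi_k+\tilde\phi_k$ you need $u$ to be \emph{more} singular than $\phi_k$, so that $\pi_k^*\theta_u - [F_k]\geq 0$. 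A potential with minimal singularities is strictly \emph{less} singular than $\phi_k$ for finite $k$, hence $\pi_k^*\theta_{u_i}-[F_k]$ need not be positive and your $\tilde u_i^k$ is not $\eta_k$-psh in general. The paper instead \emph{projects down}: it sets $u_{i,k}:=P_\theta[\phi_k](u_i)\in\PSH(X,\theta,\phi_k)$, which does lift via the bijection $\mathbf{L}_k$ of Lemma~\ref{lem:Bijection}, and establishes separately that $u_{i,k}\nearrow u_i$ together with the required control on masses and entropy (Lemma~\ref{lem_projection}).

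There is a second, related omission: you cannot use $\varphi$ itself as the reference at level $k$, since $\varphi\notin\cE(X,\theta,\phi_k)$ for the same reason. The paper introduces auxiliary potentials $\varphi_k\in\cE(X,\theta,\phi_k)$ solving $\theta_{\varphi_k}^n=m_{\phi_k}\,\omega^n$, shows $\varphi_k\to\varphi$ in capacity, and works with $\cM_{\theta,\varphi_k}$ at each step. The comparison of $\cM_{\theta,\varphi_k}$ with the Mabuchi functional on $Y_k$ (Proposition~\ref{cor:Mabuchi_UpDown}) is then a substantial computation that expresses the defect as a sum of energies restricted to the exceptional divisors $E_j\not\subset E_{nK}(\eta_k)$; your ``integration by parts'' sketch does not account for this, and in particular one needs to control non-pluripolar products restricted to the $E_j$ (Lemma~\ref{lem:Restr of min sing}) before the bound in terms of $\{\eta_k^{n-1}\}\cdot K_{Y_k/X}$ falls out.
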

The proof consists in two big steps: first, for any monotone trascendental Fujita approximation $(\phi_k)_k$ and for each $k$, we prove (almost)-convexity for $ \cM_{\theta,\varphi_k} $ where $\varphi_k$ is a suitable $\theta$-psh function such that $|\varphi_k-\phi_k|\leq C$; then we perform a limiting procedure.
 \smallskip
 
Also the inequality in \eqref{intro:Almost_Convexity_Statement_Thm} is interesting only if the quantity $H$ is finite or equal to zero.
%\begin{itemize}
  %  \item Condition A:  $H:=\limsup_{k\to +\infty} \{\eta_k^{n-1}\}\cdot K_{Y_k/X} <+\infty$;
  %  \item Condition B:  $\limsup_{k\to +\infty} \{\eta_k^{n-1}\}\cdot K_{Y_k/X} =0$.
%\end{itemize}
%Let us stress that both conditions ($H<+\infty$ or $H=0$) do not depend on the modifications $\pi_k$ (Lemma \ref{lem:Independence on modification}).

\noindent As a consequence of our theorem we get the following:

\begin{corollary}
  Assume $H=0$. Then the function $t\mapsto \cM_{\theta, \varphi}(u_t)$ is convex in $[0,1]$.
\end{corollary}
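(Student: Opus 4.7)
The plan is to deduce the corollary directly from the main theorem, with the only twist being that the error term on the right of \eqref{intro:Almost_Convexity_Statement_Thm} only compares $\cM_{\theta,\varphi}$ at an interior point $u_t$ with its values at the endpoints $u_0, u_1$, whereas genuine convexity requires comparison of every interior point with every pair of (other) points. The standard device is therefore to apply the theorem to all sub-geodesics of $(u_t)_{t\in[0,1]}$.

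More precisely, convexity of $t\mapsto\cM_{\theta,\varphi}(u_t)$ on $[0,1]$ is equivalent to the statement that for all $0\le s<t\le 1$ and all $\lambda\in[0,1]$,
\begin{equation*}
  \cM_{\theta,\varphi}\bigl(u_{(1-\lambda)s+\lambda t}\bigr)
  \;\le\; (1-\lambda)\,\cM_{\theta,\varphi}(u_s)+\lambda\,\cM_{\theta,\varphi}(u_t).
\end{equation*}
Fix such an $s<t$. By the main theorem, $u_r$ has minimal singularities for every $r\in[0,1]$, so in particular $u_s$ and $u_t$ do. Moreover, weak geodesics are defined via a Perron-type envelope construction, and it is standard that the restriction (after affine reparametrization to $[0,1]$) $\lambda\mapsto u_{(1-\lambda)s+\lambda t}$ is again the weak geodesic joining its new endpoints $u_s$ and $u_t$: any sub-segment of such an envelope is the envelope for the restricted boundary data.

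Applying the main theorem to this sub-geodesic with parameter $\lambda\in[0,1]$ yields
\begin{equation*}
  \cM_{\theta,\varphi}\bigl(u_{(1-\lambda)s+\lambda t}\bigr)
  \;\le\; (1-\lambda)\,\cM_{\theta,\varphi}(u_s)+\lambda\,\cM_{\theta,\varphi}(u_t)
  +\frac{n\lVert u_s-u_t\rVert_\infty}{2\vol(\theta)}\,H.
\end{equation*}
The hypothesis $H=0$ makes the error term vanish, giving exactly the desired convexity inequality. The only non-formal ingredient is the sub-segment statement for weak geodesics, which is the main (and mild) obstacle to check; everything else is an immediate substitution into the main theorem.
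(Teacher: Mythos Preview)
Your proof is correct and is precisely the argument the paper leaves implicit: the paper simply states the convexity as an immediate consequence of the main theorem (see the last line of Theorem~\ref{thm:Convexity_Mabuchi}), without spelling out the sub-segment step. Your observation that the endpoint inequality alone does not force convexity, and that one must reapply the theorem to the restricted geodesics $\lambda\mapsto u_{(1-\lambda)s+\lambda t}$ (which are again psh geodesics between potentials with minimal singularities), is exactly the missing detail, and it goes through as you describe.
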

When $\{\theta\}$ is big and nef we find $H=0$. The convexity of the Mabuchi functional in the big and nef case was proved by Di Nezza and Lu in \cite{DNL21} using the fact that a big and nef class can be approximated by K\"ahler classes.
 
%For completeness, we mention that when only Condition A is satisfied, we show ``almost" convexity of the Mabuchi functional. We refer to \eqref{eqn:Almost_Convexity_Statement_Thm} for the definition of almost convexity and to Theorem \ref{thm:Convexity_Mabuchi} for the precise statement. 

\smallskip
In Section \ref{sec:YTD} we examine the condition $H=0$. Notably, we observe that $H=0$ when $\{\theta\}$ has a bimeromorphic Zariski decomposition (Theorem \ref{thm:We_have_Convexity}). From this, it follows easily that $\cM_{\theta,\varphi}$ is convex in complex dimension 2.
\smallskip

As it was observed in \cite{Li23}, when $X$ is a projective manifold, and $\alpha$ is the cohomology class of a big $\Q$-divisor, if $H=0$ then one can solve the Yau-Tian-Donaldson Conjecture (see Remark \ref{rk:B YTD}). It is conjectured that $H$ is zero \cite[Conjecture 4.7]{Li23}. This raises the natural question of how the convexity of the Mabuchi functional for any big integral class is connected to the Yau-Tian-Donaldson Conjecture.

\medskip 

Once the convexity of the Mabuchi functional is established, we manage to ensure a uniform control of the entropy along the geodesic segment:

\begin{corollary}\label{cor: bound entropy geo}
   Assume $H<+\infty$. Let $C_1>0$ and let $u_0,u_1\in \PSH(X,\theta)$ be such that $u_0-u_1$ is bounded. Assume $\Ent(u_0),\Ent(u_1)\leq C_1$. Then there exists a positive constant $C_2$ such that
    $$
    \Ent(u_t)\leq C_2
    $$
    for any $t\in [0,1]$, where $C_2$ only depends on $C_1, n,X, \{\omega\},\{\theta\}, \lVert u_0-u_1\rVert_\infty$, $H$ and on a lower bound of $\vol(\theta)$.
\end{corollary}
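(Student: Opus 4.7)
The plan is to sandwich the Mabuchi functional along the geodesic: bound $\cM_{\theta,\varphi}(u_t)$ from above via the almost-convexity inequality of the main theorem, and then recover a bound on $\Ent(u_t)$ by controlling the two energy terms in the definition of $\cM_{\theta,\varphi}$.

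First I would fix a reference $\varphi\in\cE(X,\theta)$ with $\theta_\varphi^n=\vol(\theta)\omega^n$ and $\sup_X\varphi=0$ (which exists by the big analogue of Calabi--Yau), so that $\cM_{\theta,\varphi}$ is well defined. If $u_0,u_1$ do not already have minimal singularities, I would approximate them from above by potentials with minimal singularities, the same bounded difference and uniformly bounded entropy, and recover the general case at the end by lower semicontinuity of the entropy. Applying the main theorem together with the hypothesis $H<+\infty$ then yields
$$\cM_{\theta,\varphi}(u_t)\le \max\bigl(\cM_{\theta,\varphi}(u_0),\cM_{\theta,\varphi}(u_1)\bigr)+\frac{n\lVert u_0-u_1\rVert_\infty}{2\vol(\theta)}H.$$

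Second, I would bound the endpoint values. Writing out
$$\cM_{\theta,\varphi}(u_i)=\bar S_\varphi\, E(\theta;u_i,\varphi)-n\, E_{\Ric(\theta_\varphi)}(\theta;u_i,\varphi)+\Ent(u_i,\varphi),$$
the normalization $\theta_\varphi^n/\vol(\theta)=\omega^n/\int_X\omega^n$ identifies the relative entropy $\Ent(u_i,\varphi)$ with $\Ent(u_i)$ up to an additive constant depending only on $\vol(\theta)$ and $\{\omega\}$, so this term is at most $C_1+\mathrm{const}$. The two energy terms are controlled in absolute value by a constant multiple of $\lVert u_i-\varphi\rVert_\infty$, which in turn is bounded in terms of $\lVert u_0-u_1\rVert_\infty$, $\lVert u_0-\varphi\rVert_\infty$ and the geometric data. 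Along the weak geodesic, the standard plurisubharmonicity in the complexified strip variable forces $\lVert u_t-\varphi\rVert_\infty\le\max(\lVert u_0-\varphi\rVert_\infty,\lVert u_1-\varphi\rVert_\infty)$, so the same linear-in-$L^\infty$ estimate provides a uniform upper bound for $|\bar S_\varphi E(\theta;u_t,\varphi)|$ and $|n E_{\Ric(\theta_\varphi)}(\theta;u_t,\varphi)|$.

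Rearranging the defining identity of $\cM_{\theta,\varphi}$ to solve for $\Ent(u_t,\varphi)$ and combining the three estimates above yields the uniform bound $\Ent(u_t)\le C_2$ with $C_2$ of the admissible form. The main technical obstacle I expect is the treatment of $u_0,u_1$ outside the minimal-singularity regime, since the main theorem is stated only for minimal-singularity endpoints: the approximation procedure must be compatible with each piece of $\cM_{\theta,\varphi}$, and in particular with the relative Ricci energy $E_{\Ric(\theta_\varphi)}$, whose very definition depends on the (a priori singular) current $\theta_\varphi$. Once this technical point is settled, the corollary is essentially a rearrangement of the almost-convexity inequality \eqref{intro:Almost_Convexity_Statement_Thm}.
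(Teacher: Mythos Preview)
Your overall strategy (almost-convexity of $\cM_{\theta,\varphi}$, then rearrange to isolate the entropy) is the same as the paper's, but your control of the energy terms introduces an illicit dependency. You bound $|E(\theta;u_i,\varphi)|$ and $|E_{\Ric(\theta_\varphi)}(\theta;u_i,\varphi)|$ via $\lVert u_i-\varphi\rVert_\infty$, and then list $\lVert u_0-\varphi\rVert_\infty$ among the quantities that control $C_2$. But the statement requires $C_2$ to depend only on $C_1,n,X,\{\omega\},\{\theta\},\lVert u_0-u_1\rVert_\infty,H$ and a lower bound on $\vol(\theta)$; the quantity $\lVert u_0-\varphi\rVert_\infty$ is not in this list, is not bounded by it (take $u_0=V_\theta-N$ for $N$ large), and in the general case need not even be finite. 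The paper avoids this by never estimating $E(\theta;u_t,\varphi)$ or $E(\theta;u_i,\varphi)$ individually: after expanding the almost-convexity inequality, only the \emph{differences}
\[
E(\theta;u_t,\varphi)-E(\theta;u_i,\varphi)=E(\theta;u_t,u_i),\qquad E_{\Ric(\omega)}(\theta;u_t,\varphi)-E_{\Ric(\omega)}(\theta;u_i,\varphi)=E_{\Ric(\omega)}(\theta;u_t,u_i)
\]
appear, via the cocycle property, and these are bounded by $\lVert u_t-u_i\rVert_\infty\le\lVert u_0-u_1\rVert_\infty$. This cancellation of the $\varphi$-dependence is precisely what gives the constant the required form.

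Your treatment of the non-minimal-singularity case is also too optimistic. Approximating $u_0,u_1$ from above by minimal-singularity potentials ``with the same bounded difference and uniformly bounded entropy'' is not automatic: a naive cutoff such as $\max(u_i,V_\theta-C)$ does not in general control the entropy of the Monge--Amp\`ere measure. The paper does not produce such an approximation; instead it refers to an adaptation of \cite[Proposition 4.3]{DNL21}, which uses the $d_1$-geometry on $\cE^1(X,\theta)$ together with the almost-convexity of $\cM_{\theta,\varphi}$.
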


This observation is the key to show that the distance $d_1$ admits an explicit expression. Let us recall that, following \cite{DDL3}, the distance $d_1$ (in the big setting) is defined as
$$d_1(u_0, u_1):= E(\theta;u_0, V_\theta)+E(\theta;u_1, V_\theta) -2 E(\theta; P_\theta(u_0, u_1), V_\theta),$$
where $E$ denotes the energy functional as in \eqref{eqM intro} and 
$$P_\theta(u_0, u_1):= \sup \{v\in \psh(X, \theta), \, v\leq \min(u_0, u_1)\}.$$

\begin{thm}\label{thm dist}
Assume $H<+\infty$. Let $u_0,u_1\in \Ent(X,\theta)$ and $u_t$ be the weak geodesic connecting $u_0$ and $u_1$. If $u_0-u_1$ is bounded then
    \begin{equation}\label{eqn: intro di}
        d_1(u_0,u_1)=\int_X | \dot{u}_t| \,\theta_{u_t}^n
    \end{equation}
    for any $t\in [0,1]$.
\end{thm}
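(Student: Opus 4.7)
The plan is to reduce \eqref{eqn: intro di} to the case of endpoints with minimal singularities via a truncation-and-limit argument, with the uniform entropy estimate of Corollary \ref{cor: bound entropy geo} providing the key input that makes the limit work.

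First I would treat the case where $u_0,u_1\in\psh(X,\theta)$ both have minimal singularities. Then the weak geodesic $u_t$ also has minimal singularities, and \eqref{eqn: intro di} reduces to the standard representation of the $d_1$-distance in terms of the geodesic (in the spirit of \cite{DDL3}). It follows from the differentiation identity
$$\frac{d}{dt}E(\theta;u_t,V_\theta)=\int_X \dot{u}_t\,\theta_{u_t}^n,$$
valid in the minimal-singularity setting, applied on the sub-intervals on either side of the rooftop $P_\theta(u_0,u_1)$, combined with the affine-in-$t$ nature of $u_t$ that forces $\dot u_t$ to be constant in $t$ with sign determined by that of $u_1-u_0$.

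For general $u_0,u_1\in\Ent(X,\theta)$ with $\lVert u_0-u_1\rVert_\infty<\infty$, I would produce a sequence of minimal-singularity approximations $u_i^j\searrow u_i$ ($i\in\{0,1\}$, $j\in\N$) such that $\lVert u_0^j-u_1^j\rVert_\infty\leq \lVert u_0-u_1\rVert_\infty$ and $\sup_j\Ent(u_i^j)<\infty$; the truncation $u_i^j:=\max(u_i,V_\theta-j)$ is a natural candidate, the $\infty$-norm estimate following from the Lipschitz property of $\max$ and the entropy bound from standard properties of $\Ent$ under envelopes. Let $u_t^j$ denote the weak geodesic joining $u_0^j$ to $u_1^j$. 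By the minimal-singularity case,
$$d_1(u_0^j,u_1^j)=\int_X |\dot u_t^j|\,\theta_{u_t^j}^n.$$

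The left-hand side converges to $d_1(u_0,u_1)$ by continuity of the Monge--Amp\`ere energy along decreasing sequences with uniformly bounded energy. On the right, stability of weak geodesics under decreasing convergence of the endpoints gives $u_t^j\searrow u_t$ for each $t\in[0,1]$, and hence $\dot u_t^j\to\dot u_t$ in an appropriate sense. Corollary \ref{cor: bound entropy geo} applies to each $u_t^j$ to yield a uniform bound $\Ent(u_t^j)\leq C$ independent of $j$ and $t$. The main obstacle is then the passage to the limit $\int_X|\dot u_t^j|\,\theta_{u_t^j}^n\to\int_X|\dot u_t|\,\theta_{u_t}^n$: the Monge--Amp\`ere measures $\theta_{u_t^j}^n$ converge to $\theta_{u_t}^n$ only weakly, which is insufficient to integrate a merely bounded function. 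The uniform entropy bound from Corollary \ref{cor: bound entropy geo} is precisely what supplies the missing uniform integrability of the densities via the de la Vall\'ee--Poussin criterion applied to $x\log x$, upgrading the weak convergence to the strong convergence needed to conclude.
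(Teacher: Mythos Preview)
Your argument for the minimal-singularities case contains a genuine error: psh geodesics are \emph{not} affine in $t$. The map $t\mapsto u_t(x)$ is convex for each $x$, but in general strictly so; one does \emph{not} have $u_t=(1-t)u_0+tu_1$, and $\dot u_t$ is neither constant in $t$ nor has sign governed pointwise by $u_1-u_0$. What is affine is only the energy $t\mapsto E(\theta;u_t,V_\theta)$. Consequently the differentiation identity you quote yields $\int_X \dot u_t\,\theta_{u_t}^n=E(\theta;u_1,V_\theta)-E(\theta;u_0,V_\theta)$, which is the integral of $\dot u_t$, not of $|\dot u_t|$; passing from one to the other is exactly the nontrivial content. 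The phrase ``sub-intervals on either side of the rooftop'' is also unclear: the geodesics from $u_0$ and $u_1$ down to $P_\theta(u_0,u_1)$ are not subarcs of the geodesic from $u_0$ to $u_1$, so the Pythagorean formula in \cite{DDL3} does not directly decompose $\int_X|\dot u_t|\,\theta_{u_t}^n$.

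The paper proceeds quite differently and never isolates a ``minimal singularities'' base case in the big class $\{\theta\}$. Instead it uses a monotone transcendental Fujita approximation $(\phi_k)_k$: one projects $u_{i,k}:=P_\theta[\phi_k](u_i)$, transfers via the bijection $\mathbf{L}_k$ of Lemma~\ref{lem:Bijection} to potentials $v_{i,k}$ in a big \emph{and nef} class $\{\eta_k\}$ on $Y_k$, and there invokes \cite[Theorem~1.2]{DNL21}, which already contains both the constancy of $\int|\dot v_{t,k}|^p(\eta_k+dd^cv_{t,k})^n$ and the distance formula. Pulling back gives the formula for $d_1(u_{0,k},u_{1,k})$, and then one lets $k\to\infty$ using Gupta's definition of $d_p$ together with the convergence \eqref{eqn:Useful_Later} established in Theorem~\ref{thm:Geodesic_Formula2}. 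The entropy control (Corollary~\ref{cor: bound entropy geo}) and the explicit structure $\theta_{u_{i,k}}^n=g_{i,k}\theta_{u_i}^n$ with $0\le g_{i,k}\le 1$ from Lemma~\ref{lem_projection} are what make this limit go through. Your truncations $\max(u_i,V_\theta-j)$ do not enjoy this last property, so the claim that their entropy is bounded ``by standard properties'' would itself need an argument (it is likely true, since $\theta_{V_\theta}^n=\mathbf{1}_{\{V_\theta=0\}}\theta^n$ has bounded density, but it is not automatic).
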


For the purpose of the introduction, we state the above theorem for $d_1$ since the latter can be more easily defined in the big setting. But we actually can prove the analogous result for all the Finsler distances $d_p$, $p\geq 1$, recently defined in the big setting by Gupta \cite{Gup23b} (see Section \ref{sec geo dis} and Theorem \ref{thm dist big}).

%extending \cite[Theorem 1.2]{DNL21} and \cite[Theorem 4.7]{Gup23b}:
\smallskip
Let us recall that \eqref{eqn: intro di} was first proved by Chen \cite{Chen00} for $d_2$ in the K\"ahler case, specifically for smooth K\"ahler potentials. This equality was instrumental in demonstrating that $d_2$ is a genuine distance, rather than merely a semi-distance.

The distances $d_p$ on ``extended Mabuchi spaces", the so called Monge-Amp\`ere energies classes $\mathcal{E}^p$, were initially introduced by Darvas \cite{Dar17AJM} in the K\"ahler setting. Since then, they have been extensively studied by various authors due to their crucial role in the variational approach to finding K\"ahler-Einstein or cscK metrics. These distances have been defined in singular contexts through approximation procedures \cite {DNG18}, \cite{DDL3}, \cite{DNL20}, \cite{Xia19b}, \cite{Gup23b}. 

Having an explicit formulation for $d_p$ is crucial for several applications. For example, it enables us to extend the result of \cite{DNT21} and \cite{DNL21} on Monge-Amp\`ere measures on contact sets (see Proposition \ref{thm: MA contact big} and Corollary \ref{thm: domination}). Knowing the support of the Monge-Amp\`ere measure of a (singular) $\theta$-psh function is indeed a cornerstone of pluripotential theory.

 \medskip
 \noindent {\bf Acknowledgement.} The first author is supported by the project SiGMA ANR-22-ERCS-0004-02. \\
 Part of this material is based upon work done while the first author was supported by the National Science Foundation under Grant No. DMS-1928930, while the author was in residence at the Simons Laufer Mathematical Sciences Institute
(formerly MSRI) in Berkeley, California, during the Fall 2024 semester.
 
  \smallskip
 The second author is partially supported by PRIN \emph{Real and Complex Ma\-ni\-folds: Topology, Geometry and holomorphic dynamics} n.2017JZ2SW5, and by MIUR Excellence Department Projects awarded to the Department of Mathematics, University of Rome Tor Vergata, 2018-2022 CUP E83C18000100006, and 2023-2027. CUP E83C23000330006.
 \smallskip
 
 The third author is partially supported by the Knut and Alice Wallenberg Foundation.
 \smallskip

\section{Preliminaries}\label{sec:prelim}

We recall results from (relative) pluripotential theory of big cohomology classes. We borrow notation and terminology from \cite{DDL6}.

Let $(X,\omega)$ be a compact K\"ahler manifold of dimension $n$. Let $\theta$ be a smooth closed $(1,1)$-form on $X$. A function $\varphi: X\rightarrow \mathbb{R}\cup\{-\infty\}$ is quasi-plurisubharmonic (qpsh)  if it can be locally written as the sum of a plurisubharmonic function and a smooth function. $\varphi$ is called $\theta$-plurisubharmonic  ($\theta$\emph{-psh}) if it is qpsh and $\theta_\varphi:=\theta+dd^c \varphi\geq 0$ in the sense of currents. We let $\psh(X,\theta)$ denote the set of $\theta$-psh functions that are not identically $-\infty$.
In the whole paper we will assume that $\{\theta\}$ is \emph{big}, which means that it admits a \emph{Kähler current}, i.e. there exists $\psi \in {\rm PSH}(X,\theta)$ such that $\theta +dd^c \psi \geq \vep \omega$ for some small constant $\vep>0$. Here, $d$ and $d^c$ are real differential operators defined as $d:=\partial +\bar{\partial},\,  d^c:=\frac{i}{2\pi}\left(\bar{\partial}-\partial \right).$

%Following \cite[Definition 2.2]{DRNZ},
We say that a $\theta$-psh function $\varphi$ has % said to have
\emph{analytic singularities} if there exists a constant $c>0$ such that locally on $X$,
\begin{equation}
    \label{eqn:An_Sing}
    \varphi={c}\log\sum_{j=1}^{N}|f_j|^2+g,
\end{equation}
where $g$ is bounded and $f_1,\dots,f_N$ are local holomorphic functions.

\medskip

Demailly regularization's theorem ensures that there are plenty of K\"ahler currents with analytic singularities (see for e.g. \cite[Theorem 3.2]{DP04}).

The \emph{ample locus} $\Amp(\theta)$ of $\{ \theta\}$ is the set of points $x\in X$ such that there exists a K\"ahler current $T\in \{ \theta \}$ with analytic singularity smooth in a neighbourhood of $x$. The ample locus $\Amp({\theta})$ is a Zariski open subset, and it is nonempty \cite{Bou04}. The complement of the ample locus is known as the \emph{non-Kähler locus}, $\rmE_{nK}({\theta})$.

If $\varphi$ and $\varphi'$ are two $\theta$-psh functions on $X$, then $\varphi'$ is said to be \emph{less singular} than $\varphi$, i.e. $\varphi \preceq \varphi'$, if they satisfy $\varphi\le\varphi'+C$ for some $C\in \mathbb{R}$.
We say that $\varphi$ has the same singularity as $\varphi'$, i.e. $\varphi \simeq \varphi'$, if $\varphi \preceq \varphi'$ and $\varphi' \preceq \varphi$. The latter condition is easily seen to yield an equivalence relation, whose equivalence classes are denoted by $[\varphi]$, $\varphi \in \psh(X, \theta)$.

A $\theta$-psh function $\varphi$ is said to have \emph{minimal singularity type} if it is less singular than any other $\theta$-psh function.  Such $\theta$-psh functions with minimal singularity type always exist, one can consider for example
\begin{equation*}
V_\theta:=\sup\left\{ \varphi\,\,\theta\text{-psh}, \varphi\le 0\text{ on } X \right \}. 
\end{equation*}
Trivially, a $\theta$-psh function with minimal singularity type is locally bounded  in $\Amp({\theta})$.  It follows from \cite[Theorem 1.1]{DNT23} that $V_{\theta}$ is $C^{1, \bar{1}}$ in the ample locus ${\rm Amp}({\theta})$.

Given $\theta^1+dd^c\varphi_1,..., $ $ \theta^p+dd^c \varphi_p$   positive $(1,1)$-currents, where $\theta^j$ are closed smooth real $(1,1)$-forms, following the construction of Bedford-Taylor \cite{BT87} in the local setting, it has been shown in \cite{BEGZ10} that the sequence of currents 
\[
{\bf 1}_{\bigcap_j\{\varphi_j>V_{\theta_j}-k\}}(\theta^1+dd^c \max(\varphi_1, V_{\theta_1}-k))\wedge...\wedge (\theta^p+dd^c\max(\varphi_p, V_{\theta_p}-k))
\] 
is non-decreasing in $k$ and converges weakly to the so called \emph{non-pluripolar product} 
\begin{equation}\label{eq: BEGZ10_def}
\langle \theta^1_{\varphi_1 } \wedge\ldots\wedge\theta^p_{\varphi_p}\rangle .
\end{equation}
In the following, with a slight abuse of notation, we will denote the non-pluripolar product simply by $\theta^1_{\varphi_1 } \wedge\ldots\wedge\theta^p_{\varphi_p}$.
When $p =n$, the resulting positive $(n,n)$-current is a Borel measure that does not charge pluripolar sets. Pluripolar sets are Borel measurable sets that are contained in some set $\{\psi = -\infty\}$ (as it follows from \cite[Corollary 2.11]{BBGZ13}).

For a $\theta$-psh function $\varphi$, the \emph{non-pluripolar complex Monge-Amp{\`e}re measure} of $\varphi$ is
$$
\theta_\varphi^n:=(\theta+dd^c\varphi)^n.
$$

The volume of a big class $\{ \theta\}$  is defined by 
\[
{\rm Vol}({\theta}):= \int_{{\rm Amp}(\{\theta\})} \theta_{V_\theta}^n.
\]
For notational convenience in the following we simply write ${\rm Vol}(\theta)$, but keeping in mind that the volume is a cohomological constant.\\
By \cite[Theorem 1.16]{BEGZ10}, in the above expression one can replace $V_\theta$ with any $\theta$-psh function with minimal singularity type. A $\theta$-psh function $\varphi$ is said to have \emph{full Monge--Amp\`ere mass} if
\[
\int_X \theta_\varphi^n={\rm Vol}(\theta),
\]
and we then write $\varphi\in \mathcal{E}(X,\theta)$.

An important property of the non-pluripolar product is that it is local with respect to the plurifine topology (see  \cite[Corollary 4.3]{BT87},\cite[Section 1.2]{BEGZ10}).  
This topology is the coarsest such that all qpsh  functions on $X$ are continuous. For convenience we record the following version of this result for later use. 
\begin{lemma} \label{lem: plurifine}
Fix closed smooth  real big $(1,1)$-forms $\theta^1,...,\theta^n$.  Assume that $\varphi_j,\psi_j,j=1,...,n$ are $\theta^j$-psh functions such that $\varphi_j =\psi_j$ on an open set $U$ in the plurifine topology. Then 
$$
\id_{U} \theta^1_{\varphi_1} \wedge ... \wedge \theta^n_{\varphi_n} = \id_{U} \theta^1_{\psi_1} \wedge ... \wedge \theta^n_{\psi_n}.
$$
\end{lemma}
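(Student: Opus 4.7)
The plan is to reduce the statement to the classical Bedford--Taylor plurifine locality for locally bounded potentials by exploiting the very definition of the non-pluripolar product via canonical truncations, and to argue that the error sets outside the ample locus are pluripolar, hence invisible to the non-pluripolar measure.

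First I would set $V_j:=V_{\theta^j}$ and, for each $k\in\N$, introduce the canonical truncations
\begin{equation*}
\varphi_j^k:=\max(\varphi_j,V_j-k),\qquad \psi_j^k:=\max(\psi_j,V_j-k),
\end{equation*}
which are $\theta^j$-psh with minimal singularity type, hence locally bounded on $\Amp(\theta^j)$. Consider the sets
\begin{equation*}
E_k:=\bigcap_{j=1}^n\{\varphi_j>V_j-k\},\qquad F_k:=\bigcap_{j=1}^n\{\psi_j>V_j-k\}.
\end{equation*}
Since qpsh functions are continuous in the plurifine topology, each $E_k$ and $F_k$ is plurifinely open. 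By assumption $\varphi_j=\psi_j$ on the plurifinely open set $U$, so $U\cap E_k=U\cap F_k$. By definition \eqref{eq: BEGZ10_def}, the non-pluripolar product of the $\varphi_j$'s is the weak (monotone) limit of $\id_{E_k}\,\theta^1_{\varphi_1^k}\we\cdots\we\theta^n_{\varphi_n^k}$, and similarly for the $\psi_j$'s. Thus, by monotone convergence, it suffices to prove, for each fixed $k$, the identity
\begin{equation*}
\id_{U\cap E_k}\,\theta^1_{\varphi_1^k}\we\cdots\we\theta^n_{\varphi_n^k}=\id_{U\cap F_k}\,\theta^1_{\psi_1^k}\we\cdots\we\theta^n_{\psi_n^k}.
\end{equation*}

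On the plurifinely open set $W:=U\cap E_k\cap\bigcap_j\Amp(\theta^j)$, we have $\varphi_j^k=\varphi_j=\psi_j=\psi_j^k$, and all potentials involved are \emph{locally bounded} there. The Bedford--Taylor plurifine locality for locally bounded qpsh functions (\cite[Corollary 4.3]{BT87}, cf.\ also \cite[Section 1.2]{BEGZ10}) then yields
\begin{equation*}
\id_{W}\,\theta^1_{\varphi_1^k}\we\cdots\we\theta^n_{\varphi_n^k}=\id_{W}\,\theta^1_{\psi_1^k}\we\cdots\we\theta^n_{\psi_n^k}.
\end{equation*}
The remaining piece $(U\cap E_k)\sm W$ is contained in $\bigcup_j\{V_j=-\infty\}$, which is pluripolar; since the truncated products above, being non-pluripolar Monge--Amp\`ere masses of potentials with minimal singularities, do not charge pluripolar sets, this piece contributes zero on both sides. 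Combining and letting $k\to+\infty$ gives the claim.

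The main technical delicacy I anticipate is that $V_j$ is \emph{not} globally locally bounded on $X$, so the truncations $\varphi_j^k$ and $\psi_j^k$ are only locally bounded on $\Amp(\theta^j)$; one must therefore cleanly separate the plurifine argument (valid on the ample locus where Bedford--Taylor applies) from the pluripolar complement (where the non-pluripolar nature of the product kills the contribution). Once this splitting is made, the argument is a straightforward combination of the definition of the non-pluripolar product and classical local pluripotential theory.
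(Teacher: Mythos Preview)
Your argument is correct and is precisely the unpacking of the references the paper cites; the paper itself gives no proof of this lemma, simply pointing to \cite[Corollary 4.3]{BT87} and \cite[Section 1.2]{BEGZ10}, which is exactly the combination (Bedford--Taylor plurifine locality for bounded potentials plus the truncation definition of the non-pluripolar product) that you spell out.

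One small imprecision: the set $(U\cap E_k)\setminus W$ is contained in $\bigcup_j\big(X\setminus\Amp(\theta^j)\big)=\bigcup_j E_{nK}(\theta^j)$, not in $\bigcup_j\{V_j=-\infty\}$; the non-K\"ahler locus need not coincide with the polar set of $V_j$. This is harmless, since each $E_{nK}(\theta^j)$ is a proper analytic subset and hence pluripolar, so your conclusion that the complement contributes nothing to the non-pluripolar product stands unchanged.
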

Lemma \ref{lem: plurifine} will be referred to as the \emph{plurifine locality property}. We will often work with sets of the form $\{u<v\}$, where $u,v$ are quasi-psh functions. These are always open in the plurifine topology. 
\medskip

\noindent The classical Monge-Amp\`ere capacity (see \cite{BT82}, \cite{Kol98}, \cite{GZ05}) is defined by 
\[
\capa_{\omega}(E) := \sup \left \{ \int_E (\omega+dd^c u)^n \; : \; u\in \PSH(X,\omega),\; -1\leq u \leq 0 \right \}. 
\]
A sequence $u^k$ converges in capacity to $u$ if for any $\varepsilon>0$ we have
\[
\lim_{k\to +\infty} \capa_{\omega}(\{|u^k-u|\geq \varepsilon\}) =0. 
\]

\medskip
\noindent {The following extension of \cite[Theorem 2.6]{DDL6} will be used several times in the paper.}

  \begin{theorem}\label{thm:weak convergence}
    For $j\in \{1,\ldots,n\}$, let $\{\theta^j_k\}_k$ be a sequence of smooth closed real $(1,1)$-forms smoothly converging to smooth forms $\theta^j$ representing big cohomology classes. Suppose that for all  $j \in \{1,\ldots,n\}$ we  have  $u_j\in \PSH(X,\theta),u_j^k \in \PSH(X,\theta^j_k)$ such that $u_j^k \to u_j$ in capacity as $k \to + \infty.$ Let $\chi_k,\chi \geq 0$  be quasi continuous and uniformly bounded functions such that $\chi_k \to \chi$ in capacity. Then
    \begin{equation}
        \label{eqn:Lsc of MA operator}
        \liminf_{k \to +\infty} \int_X \chi_k \theta^1_{{k,}u_1^k} \wedge 
\wedge \theta^2_{{k,}u_2^k} \wedge \ldots  \wedge \theta^n_{{k,}u_n^k} \geq \int_X \chi \theta^1_{u_1} \wedge \theta^2_{u_2} \wedge \ldots  \wedge \theta^n_{u_n}.
    \end{equation}
In addition, if
\begin{equation}
    \label{eqn:Usc of MA masses}
    \limsup_{k \to +\infty} \int_X  \theta^1_{{k,}u_1^k} \wedge \theta^2_{{k,}u_2^k} \wedge \ldots  \wedge \theta^n_{{k,}u_n^k} \leq  \int_X  \theta^1_{u_1} \wedge  \theta^2_{u_2} \wedge \ldots  \wedge \theta^n_{u_n}
\end{equation}
then $$\chi_k  \theta^1_{{k,}u_1^k} \wedge \theta^2_{{k,}u_2^k} \wedge \ldots  \wedge \theta^n_{{k,}u_n^k} \to  \chi \theta^1_{u_1} \wedge \wedge \theta^2_{u_2} \wedge \ldots  \wedge \theta^n_{u_n} $$
in the weak sense of measure on $X.$ 
\end{theorem}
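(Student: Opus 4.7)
The plan is to reduce to the case of locally bounded potentials via canonical cut-offs and then invoke Bedford--Taylor-type convergence against uniformly bounded quasi-continuous weights on the ample loci, carefully tracking the extra parameter $k$ coming from the varying forms $\theta^j_k$.

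For each $M>0$ set $u_j^{k,M}:=\max(u_j^k,V_{\theta^j_k}-M)\in\PSH(X,\theta^j_k)$ and $u_j^M:=\max(u_j,V_{\theta^j}-M)\in\PSH(X,\theta^j)$. These potentials have minimal singularity type in their respective classes and hence are locally bounded on the corresponding ample loci. By the definition \eqref{eq: BEGZ10_def} of the non-pluripolar product and the plurifine locality property (Lemma \ref{lem: plurifine}),
\begin{equation*}
\int_X \chi_k\, \theta^1_{k,u_1^k}\wedge\cdots\wedge \theta^n_{k,u_n^k}\;=\;\lim_{M\to +\infty}\int_{E^k_M} \chi_k\, \theta^1_{k,u_1^{k,M}}\wedge\cdots\wedge \theta^n_{k,u_n^{k,M}},
\end{equation*}
where $E^k_M:=\bigcap_j\{u_j^k>V_{\theta^j_k}-M\}$, and the analogous formula holds in the limit with $E_M:=\bigcap_j\{u_j>V_{\theta^j}-M\}$. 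It therefore suffices to prove the liminf inequality for the bounded truncations at each fixed $M$ and then take the supremum over $M$ by monotonicity.

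For fixed $M$, the bounded statement is local in nature: on a Stein coordinate chart the smooth convergence $\theta^j_k\to\theta^j$ allows one to pick smooth local potentials $f^j_k\to f^j$ in $C^\infty$, so that the uniformly bounded psh functions $f^j_k+u_j^{k,M}$ converge in capacity to $f^j+u_j^M$. Classical Bedford--Taylor convergence for wedge products of locally bounded psh functions, combined with the weak convergence of such products tested against uniformly bounded quasi-continuous functions that converge in capacity, yields
\begin{equation*}
\chi_k\, \theta^1_{k,u_1^{k,M}}\wedge\cdots\wedge \theta^n_{k,u_n^{k,M}}\;\longrightarrow\;\chi\, \theta^1_{u_1^M}\wedge\cdots\wedge \theta^n_{u_n^M}
\end{equation*}
weakly on the ample locus. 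Since $V_{\theta^j_k}\to V_{\theta^j}$ in capacity (as a consequence of $\theta^j_k\to\theta^j$ smoothly and bigness), a standard inclusion argument at the level of the plurifine-open sets $E^k_M$ combined with Fatou's lemma delivers \eqref{eqn:Lsc of MA operator} after letting $M\to +\infty$.

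For the second statement, pick a constant $c$ with $0\leq \chi_k,\chi\leq c$. Applying the already established inequality \eqref{eqn:Lsc of MA operator} to the uniformly bounded quasi-continuous weights $c-\chi_k\to c-\chi$ in capacity and combining with the mass upper bound \eqref{eqn:Usc of MA masses} yields
\begin{equation*}
\limsup_{k\to +\infty}\int_X \chi_k\, \theta^1_{k,u_1^k}\wedge\cdots\wedge \theta^n_{k,u_n^k}\;\leq\;\int_X \chi\, \theta^1_{u_1}\wedge\cdots\wedge \theta^n_{u_n},
\end{equation*}
so the inequality in \eqref{eqn:Lsc of MA operator} upgrades to an equality in the limit. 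Since this holds for every non-negative uniformly bounded quasi-continuous $\chi_k\to\chi$, the full weak convergence of measures on $X$ follows. The main obstacle I anticipate is precisely the uniform control on the envelopes, namely establishing that $V_{\theta^j_k}\to V_{\theta^j}$ in capacity so that the truncation sets $E^k_M$ genuinely capture the full non-pluripolar mass in the limit; once this is in hand, the rest is a parameter-dependent iteration of the fixed-class argument in \cite[Theorem 2.6]{DDL6}.
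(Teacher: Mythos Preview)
Your strategy is essentially the same as the paper's: truncate at $V_{\theta^j_k}-M$, reduce to locally bounded potentials, invoke Bedford--Taylor-type convergence, then use the $c-\chi_k$ trick for the weak convergence. The paper packages the cut-off via the auxiliary functions $f_j^{k,C,\varepsilon}=\frac{\max(u_j^k-V_{\theta^j_k}+C,0)}{\max(u_j^k-V_{\theta^j_k}+C,0)+\varepsilon}$ rather than characteristic functions of $E^k_M$, which is technically cleaner but conceptually equivalent.

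The genuine gap is the one you flag yourself: the convergence $V_{\theta^j_k}\to V_{\theta^j}$ in capacity is the heart of the proof and is not a formal consequence of smooth convergence of the forms. The paper devotes roughly half the argument to it. The method is a sandwich $V_{\theta^j-\varepsilon\omega}\leq V_{\theta^j_k}\leq V_{\theta^j+\varepsilon\omega}$ for $k\geq k_0(\varepsilon)$, after which one must show that both monotone families converge to $V_{\theta^j}$ as $\varepsilon\to 0$. The increasing side $V_{\theta^j-\varepsilon\omega}\nearrow\phi^-$ is the delicate direction: the paper identifies $\phi^-$ with $V_{\theta^j}$ by using the contact-set formula $(\theta^j-\varepsilon\omega+dd^cV_{\theta^j-\varepsilon\omega})^n=\mathbf{1}_{\{V_{\theta^j-\varepsilon\omega}=0\}}(\theta^j-\varepsilon\omega)^n$ from \cite[Corollary 3.4]{DNT21}, continuity of the volume, and uniqueness of normalized solutions to the Monge--Amp\`ere equation. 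Without this, your ``standard inclusion argument'' for the sets $E^k_M$ does not get off the ground, since you have no control on where $V_{\theta^j_k}$ sits relative to $V_{\theta^j}$.

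A second, smaller point you are implicitly using: to conclude $u_j^{k,M}\to u_j^M$ in capacity you need that $\max$ preserves capacity convergence. This is not completely trivial (the obvious pointwise inequalities do not suffice), and the paper isolates it as a separate lemma (Lemma~\ref{lem:Capacity}) proved via $L^1$ convergence and a decomposition over $\{u\geq v\}$ and $\{v\geq u\}$.
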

\begin{proof}
Fix $j$ and let $T_j=\theta^j+dd^c\varphi_j$ be a Kähler current in $\{\theta^j\}$ with analytic singularities along the non-Kähler locus of $\{\theta_j\}$ (such a $T_j$ exists thanks to \cite[Theorem 3.17]{Bou04}). Let $\varepsilon_j>0$ such that $T_j\geq 2\varepsilon_j\omega$. For $k>>1$ we have $\theta^j_k\geq \theta^j-\varepsilon_j\omega$. In particular, $\varphi_j\in \PSH(X,\theta_k^j)$ and $\theta_k^j+dd^c \varphi_j$ is a Kähler current with analytic singularities. It then follows that for $k>>1$ we have $E_{nK}({\theta^j_k})\subseteq E_{nK}({\theta^j})$. Thus (up to take $k$ big enough) we can assume that for any $k$,
    \begin{equation}\label{amp}
    \Amp({\theta^j_k})\supseteq \Omega:=\cap_{j=1}^n \Amp({\theta^j}).
    \end{equation}
    We then claim that $V_{\theta_k^j}$ converges to $V_{\theta^j}$ in capacity. It follows from the smooth convergence $\theta_k^j\to \theta^j$ that for any $\varepsilon>0$ there exists $k_0=k_0(\varepsilon)>>1$ such that $\theta^j -\varepsilon \omega \leq \theta^j_k \leq \theta^j -\varepsilon \omega$ for all $k\geq k_0$. It then follows by definition that for all $k\geq k_0$,
    $$
    V_{\theta^j-\varepsilon\omega}\leq V_{\theta_k^j}\leq V_{\theta^j+\varepsilon\omega}.
    $$
     Thus to conclude the claim it is enough to show that $V_{\theta^j-\varepsilon\omega}\nearrow V_{\theta^j}$ and $V_{\theta^j+\varepsilon\omega}\searrow V_{\theta^j}$ as $\varepsilon\searrow 0$. Observe that $\{V_{\theta^j-\varepsilon\omega}\}_\varepsilon$ and $\{V_{\theta^j+\varepsilon\omega}\}_\varepsilon$ are (respectively) increasing/decreasing sequences as $\varepsilon\searrow 0$. We can then consider their point-wise $\theta^j$-psh limits 
    $$
    \phi^{+}:=\lim_{\varepsilon\to 0}V_{\theta^j+\varepsilon\omega},\quad  \phi^{-}:=\left(\lim_{\varepsilon\to 0}V_{\theta^j-\varepsilon\omega}\right)^*.
    $$
    We also observe that, by Hartog's lemma, $\sup_X  \phi^{+}= \sup_X  \phi^{-}=0$.
   Now, by construction $\phi^+\geq V_{\theta^j}$. On the other hand, $V_{\theta^j}\geq \phi^+$ since $\phi^+$ is a candidate in the envelope. Hence $\phi^+=V_\theta$.\\
    By \cite[Corollary 3.4]{DNT21} we know that 
    \begin{eqnarray}\label{ma contact}
    \left(\theta^j- \varepsilon\omega+dd^cV_{\theta^j- \varepsilon\omega}\right)^n &=& \mathbf{1}_{\{V_{\theta^j- \varepsilon\omega}=0\}}\left(\theta^j-\varepsilon\omega\right)^n =  \mathbf{1}_{\{V_{\theta^j- \varepsilon\omega}=0\}}\left((\theta^j)^n + O(\varepsilon) \right)  
    \end{eqnarray}
   Since $V_{\theta^j-\varepsilon\omega}$ is increasing to $\phi^-$ the sets 
   $\{V_{\theta^j- \varepsilon \omega} = 0 \} $ increase as 
   $\varepsilon$ decreases to $0,$   outside of a pluripolar set. 
   Let 
$$W :=  \bigcup_{\varepsilon > 0 \ \mbox{small \ }} \{ V_{\theta^j - \varepsilon \omega} = 0 \}.$$ 
Then
    $\mathbf{1}_{\{V_{\theta^j - \varepsilon \omega}=0\}}$ is increasing to  
    $\mathbf{1}_{W} \leq \mathbf{1}_{\{\phi^{-}=0\}}$  outside of the same pluripolar set. Using the fact that $\left(\theta^j- \varepsilon\omega+dd^cV_{\theta^j- \varepsilon\omega}\right)^n$ converges weakly to $\left(\theta^j_{\phi^{-}}\right)^n$, $\phi^- \leq V_{\theta^j} \leq 0$ and that the form $(\theta^j)^n$ is non-negative on the set $ \{ V_{\theta^j} = 0 \}$,  from \eqref{ma contact} we deduce that 
    $$\left(\theta^j_{\phi^{-}}\right)^n = \mathbf{1}_{W}  (\theta^j)^n \leq \mathbf{1}_{\{\phi^- =0\}} (\theta^j)^n \leq  \mathbf{1}_{\{V_{\theta^j} =0\}} (\theta^j)^n = \left(\theta^j_{V_{\theta^j}}\right)^n. $$
    On the other hand by continuity of the volume function,

    $$
    \int_X (\theta^j_{\phi^{-}})^n=\lim_{\varepsilon\to 0} \int_X\left(\theta^j- \varepsilon\omega+dd^cV_{\theta^j- \varepsilon\omega}\right)^n=\lim_{\varepsilon\to 0} \vol(\theta^j-\varepsilon\omega)=\vol(\theta^j)=\int_X \left(\theta^j_{V_\theta}\right)^n.
    $$
    It then follows that $\left(\theta^j_{\phi^{-}}\right)^n =\left(\theta^j_{V_\theta}\right)^n$. By uniqueness of normalized solutions of Monge-Amp\`ere equations,  we infer that $\phi^-=V_{\theta^j}$. The claim is then proved\newline
    The proof now proceeds exactly as that in \cite[Theorem 2.6]{DDL6} replacing the forms $\theta^j$ by $\theta^j_k$. We give the details for the reader's convenience.\\ 
    Fix an open relatively compact subset $U$ of $\Omega$. By \eqref{amp}, we know that the functions $V_{\theta^j_{{k}}}$ are bounded on $U$. Fix $C>0$, $\varepsilon>0$ and consider
    $$
    f_j^{k,C,\varepsilon}:=\frac{\max(u_j^k-V_{\theta^j_{{k}}}+C,0)}{\max(u_j^k-V_{\theta^j_{{k}}}+C,0)+\varepsilon}, \quad j=1,\dots,n, \quad k\in \N^*
    $$
    and
    $$
    u_j^{k,C}:=\max(u_j^k,V_{{\theta^j_{{k}}}}-C).
    $$
    Observe that for $C,j$ fixed, the functions $u_j^{k,C}\geq V_{\theta^j_{{k}}}-C$ are uniformly bounded in $U$ (since $V_{\theta^j_{{k}}}$ are uniformly bounded in $U$) and converge in capacity to $u_j^C:= \max(u_j,V_{\theta^j}-C)$ as $k\to +\infty$ by Lemma \ref{lem:Capacity} below. 

Moreover $f_j^{k,C,\varepsilon}=0$ if $u_j^k\leq V_{\theta^j_{k}}-C$. By locality of the non-pluripolar product we can write
    $$
    f^{k,C,\varepsilon}\chi_k\theta^1_{{k},u_1^k}\wedge \dots\wedge \theta^n_{{k},u_n^k}=f^{k,C,\varepsilon}\chi_k \theta^1_{{k},u_1^{k,C}}\wedge \dots \wedge \theta^n_{{k},u_n^{k,C}},
    $$
    where $f^{k,C,\varepsilon}=f_1^{k,C,\varepsilon}\cdots f_n^{k,C,\varepsilon}$. For each $C,\varepsilon$ fixed the functions $f^{k,C,\varepsilon}$ are quasi-continuous, uniformly bounded (with values in $[0,1]$) and converge in capacity to $f^{C,\varepsilon}:=f_1^{C,\varepsilon}\cdots f_n^{C,\varepsilon}$ where $f_j^{C,\varepsilon}$ is defined by
    $$
    f_j^{C,\varepsilon}:=\frac{\max(u_j-V_{\theta^j}+C,0)}{\max(u_j-V_{\theta^j}+C,0)+\varepsilon}.
    $$
    With the information above we can apply \cite[Proposition 2.2]{DDL6} to get that
    $$
    f^{k,C,\varepsilon}\chi_k \theta^1_{{k},u_1^{k,C}}\wedge \dots \wedge \theta^n_{{k},u_n^{k,C}}\longrightarrow f^{C,\varepsilon}\chi \theta^1_{u_1^C}\wedge \dots \wedge \theta^n_{u_n^C}\quad \text{as } k\to +\infty,
    $$
    in the weak sense of measures on $U$. In particular since $0\leq f^{k,C,\varepsilon}\leq 1$ we have that
    \begin{align*}
        \liminf_{k\to+\infty}\int_X \chi_k \theta^1_{{k},u_1^k}\wedge \dots \wedge \theta^n_{{k},u_n^k}&\geq \liminf_{k\to +\infty} \int_U f^{k,C,\varepsilon}\chi_k \theta^1_{{k},u_1^{k,C}}\wedge \dots \wedge \theta^n_{{k},u_n^{k,C}}\\
        &\geq \int_U f^{C,\varepsilon} \chi \theta^1_{u_1^C}\wedge \dots \wedge \theta^n_{u_n^C}.
    \end{align*}
    Now, letting $\varepsilon\to 0$ and then $C\to +\infty$, we obtain
    $$
    \liminf_{k\to+\infty}\int_X \chi_k \theta^1_{{k},u_1^k}\wedge \dots \wedge \theta^n_{{k},u_n^k}\geq \int_U \chi \theta^1_{u_1}\wedge \dots \wedge \theta^n_{u_n}.
    $$
    Finally, letting $U$ increase to $\Omega$ and noting that the complement of $\Omega$ is pluripoar, we conclude the proof of the first statement of the theorem. Note that in the particular case $\chi_k=\chi\equiv 1$, we have
    $$ \liminf_{k\to+\infty}\int_X \theta^1_{{k},u_1^k}\wedge \dots \wedge \theta^n_{{k},u_n^k}\geq \int_X \theta^1_{u_1}\wedge \dots \wedge \theta^n_{u_n}.$$
    \newline
Thus we actually have equality in \eqref{eqn:Usc of MA masses} and the $\limsup$ is a $\lim$.\newline
    Now, let $B\in \R$ such that $\chi,\chi_k\leq B$. By \eqref{eqn:Lsc of MA operator} we get that
    $$
    \liminf_{k\to+\infty}\int_X (B-\chi_k)\theta^1_{{k},u_1^k}\wedge \dots\wedge \theta^n_{{k},u_n^k}\geq \int_X (B-\chi) \theta^1_{u_1}\wedge \dots\wedge \theta^n_{u_n}.
    $$
    Flipping the signs and using (equality in) \eqref{eqn:Usc of MA masses}, we conclude the following inequality, finishing the proof:
    $$
    \limsup_{k\to +\infty}\int_X \chi_k \theta^1_{{k},u_1^k}\wedge \dots\wedge \theta^n_{{k},u_n^k}\leq \int_X \chi \theta^1_{u_1}\wedge \dots\wedge \theta^n_{u_n}.
    $$
\end{proof}

\begin{lemma}\label{lem:Capacity}
    Let $u_k,v_k$ be two sequences of quasi-psh functions that converge in capacity respectively to $u,v$. Then
    $$
    \max(u_k,v_k)\longrightarrow\max(u,v)
    $$
    in capacity.
\end{lemma}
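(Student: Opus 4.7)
The plan is to reduce the statement to a pointwise real-variable inequality and then invoke subadditivity of the Monge--Amp\`ere capacity. First I would record the elementary bound
\[
|\max(a,b)-\max(c,d)|\leq \max(|a-c|,|b-d|)
\]
valid for all $a,b,c,d\in \mathbb{R}$, which is proved by a short case analysis: assuming without loss of generality that $\max(a,b)\geq \max(c,d)$ and $\max(a,b)=a$, one has $\max(a,b)-\max(c,d)\leq a-c\leq |a-c|$, and the case $\max(a,b)=b$ is symmetric.

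Applying this pointwise with $a=u_k(x)$, $b=v_k(x)$, $c=u(x)$, $d=v(x)$, off the pluripolar set where any of the functions equals $-\infty$ (which has zero Monge--Amp\`ere capacity and can therefore be safely discarded), yields the set inclusion
\[
\{|\max(u_k,v_k)-\max(u,v)|\geq \varepsilon\}\subseteq \{|u_k-u|\geq \varepsilon\}\cup \{|v_k-v|\geq \varepsilon\}
\]
for every $\varepsilon>0$.

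Finally, by the subadditivity of $\capa_\omega$ (immediate from its definition as a supremum of integrals over bounded $\omega$-psh test functions) one obtains
\[
\capa_\omega\bigl(\{|\max(u_k,v_k)-\max(u,v)|\geq \varepsilon\}\bigr)\leq \capa_\omega\bigl(\{|u_k-u|\geq \varepsilon\}\bigr)+\capa_\omega\bigl(\{|v_k-v|\geq \varepsilon\}\bigr),
\]
and the right-hand side tends to $0$ as $k\to+\infty$ by the hypotheses $u_k\to u$ and $v_k\to v$ in capacity. No genuine obstacle arises; the only steps requiring minor care are the verification of the pointwise maximum inequality and the implicit restriction to the complement of the pluripolar set where quasi-psh functions can take the value $-\infty$.
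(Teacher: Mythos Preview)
Your argument is correct and considerably more direct than the paper's. The pointwise inequality $|\max(a,b)-\max(c,d)|\leq \max(|a-c|,|b-d|)$ immediately yields the set inclusion you need, and subadditivity of $\capa_\omega$ finishes the proof in one line.

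By contrast, the paper takes a longer route: it first proves $L^1$-convergence of $\varphi_k:=\max(u_k,v_k)$ to $\varphi:=\max(u,v)$ via the identity $2\max(a,b)=a+b+|a-b|$, then introduces the decreasing envelope $\psi_k:=\sup_{j\geq k}\varphi_j$, argues that $\psi_k\searrow\varphi$ (hence converges in capacity), and finally obtains the inclusion
\[
\{|\varphi_k-\varphi|\geq\delta\}\subseteq\{\psi_k-\varphi\geq\delta\}\cup\{|u_k-u|\geq\delta\}\cup\{|v_k-v|\geq\delta\}
\]
by splitting $X$ into $\{u\geq v\}$ and $\{v\geq u\}$. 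Your single inequality replaces all of this machinery; in particular you never need the $L^1$ step or the auxiliary envelope $\psi_k$. The paper's approach does showcase the standard technique of passing through a monotone regularization to get capacity convergence, but for this particular lemma your elementary bound is both shorter and more transparent.
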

\begin{proof}
    Set $\varphi_k:=\max(u_k,v_k)$, $\varphi:=\max(u,v)$ and $\psi_k:=\sup_{j\geq k}\varphi_j$. Note that $\psi_k \searrow$ and 
    $\psi_k \geq \varphi_k$.
     We first observe that $\varphi_k\to \varphi$ in $L^1$. Indeed, as for any two real numbers $a,b$ we have $2\max(a,b)=a+b+\lvert a-b\rvert$, we get
    \begin{align*}
        2\lVert \varphi_k-\varphi \rVert_{L^1}&\leq \lVert u_k-u \rVert_{L^1} + \lVert v_k-v \rVert_{L^1} + \lVert \lvert u_k-v_k\rvert-\lvert u-v\rvert \rVert_{L^1}\\
        &\leq \lVert u_k-u \rVert_{L^1} + \lVert v_k-v \rVert_{L^1} + \lVert u_k-u +v_k-v \rVert_{L^1}\\
        &\leq 2\left(\lVert u_k-u \rVert_{L^1} + \lVert v_k-v \rVert_{L^1} \right).
    \end{align*}
    We deduce that $\varphi_k\to \varphi$ weakly as $u_k\to u,v_k\to v$ weakly.\newline
    Thus we infer that $\psi_k\searrow \varphi$, in particular $\psi_k \to \varphi$ in capacity. Let $\delta>0$. Clearly
    \begin{equation}\label{eqn:Cap_Conv1}
        \left\{\lvert \varphi_k-\varphi \rvert\geq \delta\right\}\subset     \left\{ \varphi_k\geq \delta+\varphi \right\}\cup \left\{ \varphi \geq \delta+ \varphi_k\right\}
    \end{equation}
    and
    \begin{equation}\label{eqn:Cap_Conv2}
        \left\{ \varphi_k\geq \delta+\varphi \right\}\subseteq \left\{ \psi_k\geq \delta+\varphi \right\}.
    \end{equation}
 Then we set $A:=\{u\geq v\}, B:=\{v\geq u\}$. On $A$ we have
    \begin{equation}\label{eqn:Cap_Conv3}
        \left\{ \varphi \geq \delta+ \varphi_k\right\}=\left\{ u \geq \delta+ \varphi_k\right\}\subseteq \left\{ u \geq \delta+ u_k\right\} = \left\{ \lvert u_k-u\rvert \geq \delta \right\}
    \end{equation}
    as $\varphi_k\geq u_k$. Similarly, on $B$ we get
    \begin{equation}\label{eqn:Cap_Conv4}
        \left\{ \varphi \geq \delta+ \varphi_k\right\}\subseteq \left\{ \lvert v_k-v\rvert \geq \delta \right\}.
    \end{equation}
    Combining \eqref{eqn:Cap_Conv3} and \eqref{eqn:Cap_Conv4} we get
    \begin{equation}\label{eqn:Cap_Conv5}
        \left\{ \varphi \geq \delta+ \varphi_k\right\}=\left(\left\{ \varphi \geq \delta+ \varphi_k\right\}\cap A \right)\cup\left(\left\{ \varphi \geq \delta+ \varphi_k\right\}\cap B \right)\subset \left\{ \lvert u_k-u\rvert \geq \delta \right\} \cup \left\{ \lvert v_k-v\rvert \geq \delta \right\}.
    \end{equation}
    Thus \eqref{eqn:Cap_Conv1}, \eqref{eqn:Cap_Conv2} and \eqref{eqn:Cap_Conv5} leads to
    $$
    \left\{\lvert \varphi_k-\varphi \rvert\geq \delta\right\}\subseteq \left\{ \psi_k-\varphi\geq \delta \right\} \cup \left\{ \lvert u_k-u\rvert \geq \delta \right\} \cup \left\{ \lvert v_k-v\rvert \geq \delta \right\}.
    $$
    As $u_k\to u, v_k\to v, \psi_k\to \varphi$ in capacity, we conclude that $\varphi_k\to \varphi$ in capacity thanks to the subadditivity property of the Monge-Ampère capacity.
\end{proof}

\medskip
 \subsection{Envelopes and model potentials} \label{subsec env}
If $f$ is a function on $X$, we define the  envelope of $f$ in the class ${\rm PSH}(X,\theta)$ by
\[
P_{\theta}(f) := \left(\sup \{u\in {\rm PSH}(X,\theta) \; : \;  u\leq f\}\right)^*,
\]
with the convention that $\sup\emptyset =-\infty$. Observe that $P_{\theta}(f)\in {\rm PSH}(X,\theta)$ if and only if there exists some $u\in {\rm PSH}(X,\theta)$ lying below $f$. Note also that $V_{\theta}=P_{\theta}(0)$, and that $P_{\theta}(f+C)= P_{\theta}(f)+C$ for any constant $C$. 

\noindent In the particular case $f=\min(\psi,\phi)$, we denote the envelope as
$ P_\theta(\psi,\phi):=P_\theta(\min(\psi,\phi))$.
We observe that $ P_\theta(\psi,\phi)=  P_\theta(P_\theta(\psi),P_\theta(\phi))$, so w.l.o.g. we can assume  $\psi,\phi$ are two $\theta$-psh functions.

\medskip
Starting from the  ``rooftop envelope'' $ P_\theta(\psi,\phi)$ we introduce
$$P_\theta[\psi](\phi) := \Big(\lim_{C \to \infty}P_\theta(\psi+C,\phi)\Big)^*.$$
It is easy to see that $P_\theta[\psi](\phi)$ only depends on the singularity type of $\psi$. When $\phi = V_\theta$, we will simply write $P_\theta[\psi]:=P_\theta[\psi](V_\theta)$, and we refer to this potential as the \emph{envelope of the singularity type} $[\psi]$.

Since $\psi - \sup_X \psi \leq P_\theta[\psi]$, we have that $[\psi] \leq [P_\theta[\psi]]$ and typically equality does not happen. When $[\psi] = [P_\theta[\psi]]$, we say that $\psi$ has \emph{model singularity type}. In the (more particular) case $\psi = P_\theta[\psi]$ we say that $\psi$ is a \emph{model potential}.\\
It is worth to mention that given any $\theta$-psh function $\psi$ with positive mass, the associated envelope $P_\theta[\psi]$ is in fact a model potential \cite[Theorem 3.14]{DDL6}. 
\medskip

From now on, (otherwise stated), $\phi$ will denote a model potential with strictly positive mass, i.e. $\int_X \theta_\phi^n>0$. We say that a $\theta$-psh function $\varphi$ has $\phi$-relative minimal singularities if $\varphi \simeq \phi$.

\begin{remark}
Let $\psi \in \PSH(X,\theta)$  with  $\sup_X \psi = 0,$ for  $ N \in \bf{N}$ 
we set $P_N := P_\theta(\psi + N, V_\theta).$  On one hand we have $P_N  \sim \psi,$ on the other hand $P_N$ is increasing to $P[\psi],$ then by Remark \ref{rk:conv_incr} $\theta_{P_N}^n$ converges weakly to $\theta_{P[\psi]}^n,$ hence $\int_X \theta_\psi^n = \int_X \theta_{P[\psi]}^n.$  \label{model} \end{remark} 

\begin{definition}
Given a model potential $\phi$, the relative full mass class $\mathcal{E}(X,\theta,\phi)$ is the set of all $\theta$-psh functions $u$ such that $u$ is more singular than $\phi$ and $\int_X \theta_u^n=\int_X \theta_{\phi}^n$.  We will denote simply by $\mathcal{E}(X,\theta)$ the space $\mathcal{E}(X,\theta,V_\theta).$
\end{definition}

As pointed out in \cite{GZ07}, for potential theoretic reasons, it is natural to consider weighted subspaces of $\mathcal E(X,\theta,\phi)$.

 A weight is a continuous strictly increasing function  $\chi: [0,+\infty) \rightarrow [0,+\infty)$ such that $\chi(0)=0$ and $\chi(+\infty)=+\infty$. Denote by $\chi^{-1}$ its inverse function, i.e. such that $\chi(\chi^{-1}(t))= t$ for all $t\geq 0$.

\medskip

We fix $\phi$ a model potential and we let $\mathcal{E}_{\chi}(X,\theta,\phi)$ denote the set of all $u\in \mathcal{E}(X,\theta,\phi)$ such that 
\[
E_{\chi}(u,\phi):=\int_X \chi(|u-\phi|) \theta_u^n <\infty. 
\]
When $\phi=V_{\theta}$, we denote $\mathcal{E}(X,\theta)=\mathcal{E}(X,\theta,V_{\theta})$, $\mathcal{E}_{\chi}(X,\theta)=\mathcal{E}_{\chi}(X,\theta,V_{\theta}$) and $E_{\chi}(u)=E_{\chi}(u,V_{\theta})$. Compared to \cite{GZ07},  we have changed the sign of the weight, but the weighted classes are the same. \\Also, in the special case $\chi(t)=t^p$, $p>0$, we simply denote the relative energy class with $\mathcal{E}^p(X,\theta,\phi)$
and the corresponding relative energy $E_p(u,\phi)$.
\medskip

\begin{remark}\label{rk:conv_incr}
Under the assumptions of Theorem \ref{thm:weak convergence} we further assume that for all $j \in  \{ 1, \ldots,n \} $ and for $k$ large enough, $u^k_j$ is more singular than $u_j$, then $$\theta^1_{u_1^k} \wedge \theta^2_{u_2^k} \wedge \ldots  \wedge \theta^n_{u_n^k} \to \theta^1_{u_1} \wedge \theta^2_{u_2} \wedge \ldots  \wedge \theta^n_{u_n}$$ in the weak sense of measures. Indeed, by \cite[Theorem 1.2]{WN19}, \cite[Theorem 3.3]{DDL6}, if $u^k_j$ is more singular than $u_j$ we have 
$$\int_X \theta^1_{u_1^k} \wedge \theta^2_{u_2^k} \wedge \ldots  \wedge \theta^n_{u_n^k} \leq  \int_X  \theta^1_{u_1} \wedge\theta^2_{u_2} \wedge \ldots  \wedge \theta^n_{u_n}.$$
This means that the second statement of Theorem \ref{thm:weak convergence} above holds with $\chi_k=\chi\equiv 1$.\\
The same conclusion holds if $u^k_j$, $u_j\in \mathcal{E}(X, \theta, \phi_j)$ (where $\phi_j$ are model potentials) since by \cite[Proposition 3.1]{DDL3} we know that
$$ \int_X \theta^1_{u_1^k} \wedge \theta^2_{u_2^k} \wedge \ldots  \wedge \theta^n_{u_n^k} = \int_X  \theta^1_{u_1} \wedge\theta^2_{u_2} \wedge \ldots  \wedge \theta^n_{u_n}.$$
%\textcolor{red}{Stefano: Io questo Remark lo metterei subito dopo il teorema \ref{thm:weak convergence} o magari dopo il lemma sulla convergenza in capacita' del max. } 
%\textcolor{blue}{Ele: eh ma dove dici tu non é defintio il potentiale modello e le classi di energia...ci avevo gia pensato}
\end{remark}
\subsection{Plurisubharmonic geodesics}
We next recall the definition/construction in \cite{DDL1} of plurisubharmonic geodesics.

For a curve $(0,1) \ni t \mapsto u_t \in \PSH(X,\theta)$  we define 
\begin{equation}\label{eq: complexified curve}
X\times A \ni (x,z) \mapsto U(x,z) := u_{\log |z|}(x),	
\end{equation}
where  $A= \{z\in \C, \; 1 < |z| < e \}$ and $\pi: X\times A \rightarrow X$ is the projection on the first factor. 
 %We let $\pi: X\times D \rightarrow X$ be the projection on $X$. 
\begin{definition}
	We say that $t\mapsto u_t$ is a subgeodesic if $(x,z) \mapsto U(x,z)$ is a $\pi^{*}\theta$-psh function on $X\times A$.
\end{definition}

\begin{definition}
	For  $\varphi_0,\varphi_1 \in \PSH(X,\theta)$, we let $\mathcal{S}_{[0,1]}(\varphi_0,\varphi_1)$ denote the set of  all subgeodesics $(0,1) \ni t \mapsto u_t$ such that $\limsup_{t\to 0^+} u_t\leq \varphi_0$ and $\limsup_{t\to 1^-} u_t\leq \varphi_1$. 
\end{definition}

Let $\varphi_0,\varphi_1 \in \PSH(X,\theta)$. For $(x,z)\in X\times A$ we define
$$
\Phi(x,z) :=  \sup \{ U(x,z) \; :\;  U \in \mathcal{S}_{[0,1]}(\varphi_0,\varphi_1) \}. 
$$
The curve $t\mapsto \varphi_t$ constructed from $\Phi$ via \eqref{eq: complexified curve} is called the plurisubharmonic (psh) geodesic segment connecting $\varphi_0$ and $\varphi_1$.  

Geodesic segments connecting two general $\theta$-psh functions may not exist. If $\varphi_0, \varphi_1 \in \mathcal{E}^p(X,\theta)$, it was shown in \cite[Theorem 2.13]{DDL1} that $P(\varphi_0,\varphi_1) \in \mathcal{E}^p(X,\theta)$. Since $P(\varphi_0,\varphi_1) \leq \varphi_t$, we obtain that $t \to \varphi_t$ is a curve in $\mathcal{E}^p(X,\theta)$. Also, each subgeodesic segment is convex in $t$:  
\[
\varphi_t\leq \left (1-t\right )\varphi_0 + t\varphi_1, \ \forall t\in [0,1]. 
\]
Consequently the upper semicontinuous regularization (with respect to both variables $x,z$) of $\Phi$ is again in $\mathcal{S}_{[0,1]}(\varphi_0,\varphi_1)$, hence so is $\Phi$. If $\varphi_0,\varphi_1$ have the same singularities then the geodesic $\varphi_t$ is Lipschitz on $[0,1]$ (see \cite[Lemma 3.1]{DDL1}): 
\begin{equation}
	\label{eq: Lip}
	|\varphi_t-\varphi_s| \leq |t-s| \sup_{X} |\varphi_0-\varphi_1|, \ \forall t,s \in [0,1]. 
\end{equation}

 \section{Entropy}\label{sec: entr}
 We recall that given two positive probability measures $\mu, \nu$, the relative entropy $\Ent(\mu, \nu)$ is defined as  
\[
\Ent(\mu, \nu) := \int_X \log\left (\frac{d\mu}{d\nu}\right) d\mu,
\]
if $\mu$ is absolutely continuous with respect to $\nu$, and $+\infty$ otherwise. \\
\begin{remark}
Let $\mu, \nu$ positive probability measure with $\mu := f \nu $ absolutely continuous with respect to $\nu.$ Then $\Ent(\mu,\nu) < + \infty$ if and only if $f\log f \in L^1(X,\nu),$ in fact if $f < 1,$ $f \log f$ is bounded, and if $f \geq 1,$ $f \log f \geq 0.$ \label{L^1}       
\end{remark}

Once for all, we normalize the K\"ahler form $\omega$ such that $\int_X \omega^n=1$. We consider $u\in \psh(X, \theta)$ such that $\theta_u^n=f \omega^n$, $0 \leq f$ and $m_u:=\int_X \theta_u^n >0$. Then $ u\in \mathcal{E}(X, \theta, \phi)$, where $\phi$ is the model potential with $\int_X \theta_\phi^n=\int_X \theta_u^n $, and $m_u^{-1}\theta_u^n$ s a probability measure. We then define the $\theta$\emph{-entropy} of $u$ as
\begin{equation}\label{eq: entropy}
 {\rm Ent}_\theta(u):= {\rm Ent}( m_u^{-1}\theta_u^n, {\omega^n})=\int_X \log \left( \frac{m_u^{-1}\theta_u^n}{\omega^n}  \right) m_u^{-1}\theta_u^n= m_u^{-1}  \int_X f\log f  \omega^n-\log m_u. 
 \end{equation}
By Jensen inequality we have ${\rm Ent}_\theta(u)\geq 0$. Also, observe that the definition of the $\theta$-entropy does depend on the chosen volume form $\omega^n$ but its finiteness does not. \\
Also, the expression in \eqref{eq: entropy} coincides with the definition of entropy in \cite{DGL20} when $P[u]=V_\theta$, i.e. when $u\in \mathcal{E}(X, \theta)$. The definition in \eqref{eq: entropy} is indeed a generalisation that allows to consider any $\theta$-psh function not necessarily of full mass.\\
\medskip
More generally, given two $\theta$-psh functions $u,v$ with $m_u, m_v>0$ we define
$${\rm Ent}_\theta(u,v) :={\rm Ent}( m_u^{-1}\theta_u^n, m_v^{-1}\theta_v^n). $$
Also, if no confusion can arise, we simply write ${\rm Ent} (u) $ and ${\rm Ent} (u,v) $.

\begin{definition}\label{def: entropy}
    We say that $u\in \psh(X, \theta)$ with $m_u>0$ has \emph{finite $\theta$-entropy} if ${\rm Ent}_\theta(u)<+\infty$. We denote by ${\rm Ent}(X, \theta)$ the set of all $\theta$-psh functions having finite $\theta$-entropy.
    %and by ${\rm Ent}_B(X, \theta)$, $B>0$, the subset of $\theta$-psh functions having finite $\theta$-entropy bounded by $B$.
\end{definition}
 \noindent By \eqref{eq: entropy}, ${\rm Ent}_\theta(u)<+\infty$ if and only if $\int_X f\log f\omega^n<+\infty$ or equivalently $\int_X (f+1)\log (f+1)\omega^n<+\infty$.
\smallskip

\noindent We recall the following result from \cite{DTT23} which will  be useful in the following:

 \begin{lemma}
    \label{lem:Holo}
     Let $\pi:Y\to X$ a bimeromorphic and holomorphic map and assume that $\tilde{\omega}$ is K\"ahler form on $Y$ normalized with volume equal to $1$. Then
     \begin{itemize}
     \item[(i)] $\Ent (\mu, \nu)=\Ent (\pi^* \mu, \pi^* \nu) $, for any two non-pluripolar probability measures $\mu, \nu$.
     \item[(ii)] If $\Ent_\theta(\varphi) <+\infty$, then $\Ent(m_\varphi^{-1}  \pi^* \theta^n_\varphi , \tilde{\omega}^n)<+\infty$. In particular $\Ent_{\pi^* \theta} (\pi^* \varphi)<+\infty$.
     \end{itemize}
 \end{lemma}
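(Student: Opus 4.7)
For part (i), my plan is to exploit the standard fact that a bimeromorphic holomorphic map $\pi:Y\to X$ restricts to a biholomorphism $\pi|_{Y\setminus E'}:Y\setminus E'\to X\setminus E$, where $E\subset X$ and $E':=\pi^{-1}(E)\subset Y$ are proper analytic subsets, hence pluripolar. Since $\mu,\nu$ are non-pluripolar, they put zero mass on $E$, and by construction $\pi^*\mu,\pi^*\nu$ put zero mass on $E'$. If $\mu\not\ll\nu$ then also $\pi^*\mu\not\ll\pi^*\nu$ and both entropies equal $+\infty$; otherwise, under the biholomorphism the Radon--Nikodym derivative transforms by composition: $d(\pi^*\mu)/d(\pi^*\nu)=(d\mu/d\nu)\circ\pi$ on $Y\setminus E'$. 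The identity $\Ent(\pi^*\mu,\pi^*\nu)=\Ent(\mu,\nu)$ then follows from the change-of-variables formula on this Zariski open set.

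For part (ii), write $\pi^*\omega^n=h\,\tilde\omega^n$, where $h$ is a smooth non-negative function on $Y$, bounded above by some $M<+\infty$, which vanishes along the exceptional locus of $\pi$. Setting $\theta_\varphi^n=f\omega^n$, the density of $\pi^*\theta_\varphi^n$ with respect to $\tilde\omega^n$ equals $\tilde f:=(f\circ\pi)\,h$. Since $\tilde\omega^n$ is a probability measure and $x\log x\geq -1/e$ for all $x\geq 0$, the negative part of the entropy integral is automatically bounded, so finiteness reduces to showing $\int_Y \tilde f\log^+\tilde f\,\tilde\omega^n<+\infty$. Using $\log^+(ab)\leq \log^+a+\log^+b$ together with $\log^+h\leq\log^+M$, one obtains
\[ \tilde f\log^+\tilde f\;\leq\;(f\circ\pi)\,h\,\log^+(f\circ\pi)+(\log^+M)\,\tilde f. \]
Integrating against $\tilde\omega^n$: the second term contributes $(\log^+M)\,m_\varphi<+\infty$; the first, rewritten as $(f\log^+f)\circ\pi\cdot\pi^*\omega^n$ and evaluated via change of variables on $Y\setminus E'$, equals $\int_X f\log^+f\,\omega^n$, which is finite because $\Ent_\theta(\varphi)<+\infty$ is equivalent to $f\log f\in L^1(\omega^n)$ (cf.\ Remark~\ref{L^1}). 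The ``in particular'' statement is then immediate from the identity $(\pi^*\theta)^n_{\pi^*\varphi}=\pi^*\theta_\varphi^n$ (non-pluripolar Monge--Amp\`ere is preserved under bimeromorphic pullback) together with $m_{\pi^*\varphi}=m_\varphi$.

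The step I expect to be the main technical obstacle is managing the change of reference measure from $\pi^*\omega^n$ to $\tilde\omega^n$: a naive additive correction would involve $\int\log h\,d\pi^*\mu$, which forces one to control $\log h$ near the exceptional divisor where it diverges to $-\infty$. The decomposition through $\log^+$ circumvents this entirely, because only the bounded-above quantity $\log^+ h$ appears in the estimates, while the unbounded negative part of $\log h$ is harmlessly absorbed by the universal lower bound $x\log x\geq -1/e$ on the entropy integrand.
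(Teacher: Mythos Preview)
Your argument is correct in both parts. Part (i) is the natural change-of-variables argument on the Zariski open set where $\pi$ is a biholomorphism, using that non-pluripolar measures ignore the analytic exceptional sets; part (ii) correctly handles the change of reference volume form by isolating $\log^+$, which replaces the dangerous $\int\log h\,d(\pi^*\mu)$ by the bounded $\log^+ h\leq \log^+ M$, and then reduces to the finiteness of $\int_X f\log^+ f\,\omega^n$ via $\pi^*\omega^n=h\,\tilde\omega^n$. The ``in particular'' clause follows as you say.

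There is nothing to compare directly against in the paper: the paper does not prove Lemma~\ref{lem:Holo} but refers the reader to \cite[Lemma~2.11]{DTT23}. Your self-contained proof is therefore a genuine addition rather than a reproduction; the $\log^+$ decomposition you use is exactly the clean way to avoid having to control $\log h$ near the exceptional divisor, and is presumably close in spirit to the referenced argument.
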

By $\pi^*\mu$ we mean the pushforward by $\pi^{-1}$ of $\mathbf{1}_{X\setminus Z}\mu$ where $Z$ is the indeterminacy locus of $\pi^{-1}$ (see also \cite[lines after Definition 1.3]{BBEGZ19}).
 We refer to \cite[Lemma 2.11]{DTT23} for a proof.

\section{Convexity of the Mabuchi functional in big classes}\label{sec: convex}

Assume $\theta$ to be a smooth form such that $\{\theta\}$ is big. W.l.o.g. we normalize the K\"ahler form $\omega$ such that $\int_X\omega^n=1$.
Following \cite[Section 2]{BBJ15}, given a $\theta$-psh function $\varphi$, we say that $\theta_\varphi$ has well-defined Ricci curvature if its Monge-Ampère measure $\theta_\varphi^n$ corresponds to a singular metric on $K_X$, i.e. if locally
$$
\theta_\varphi^n=e^{-f}i^{n^2}\Omega\wedge \bar{\Omega}
$$
with $f\in L^1_{loc}$ and where $\Omega$ is any nowhere zero local holomorphic section of $K_X$. The Ricci curvature is then locally given by $$\Ric(\theta_\varphi):=\Ric (\theta_\varphi^n)= dd^c f.$$
The local currents  $dd^c f$ glue together and define a (global) closed real  $(1,1)$-current $\Ric(\theta_\varphi)$, which recovers the usual definition of the Ricci curvature when $\theta_\varphi$ is K\"ahler. With this choice of norma\-lization, if $\theta$ and $\theta_\varphi$ have well-defined Ricci curvature we have
$$\Ric(\theta_\varphi) = \Ric(\theta) -dd^c \log \left(\frac{\theta_\varphi^n}{\theta^n}\right).$$

Moreover for every current $\theta_\varphi$ with well defined Ricci curvature the cohomology class of $\Ric(\theta_\varphi)$ is always the first Chern class. %\textcolor{blue}{Stefano: dubbio filosofico,mi pare  giusto, concordate ?}
We say that $\theta_\varphi$ has \emph{good Ricci curvature} if it has well-defined Ricci curvature and there exists real closed positive $(1,1)$-currents $T_1, T_2$ such that  
$$\Ric(\theta_\varphi)= T_1-T_2.$$
Note that if $\theta_\varphi$ has good Ricci curvature then $\int_X\theta_\varphi^n>0$ and $\theta_\varphi^n=e^g \omega^n$ where $g$ is difference of quasi-plurisubharmonic functions. In particular, \cite[Theorem 1.1]{GZh15} implies that $\theta_\varphi^n$ has $L^p$ density for $p>1$. Hence \cite[Theorem 1.4]{DDL2} gives that $\varphi$ has $\phi:=P_\theta[\varphi]$-relative minimal singularities, where $\phi$ is the model type envelope such that $\varphi\in \cE(X,\theta,\phi)$.

\smallskip
Let now $\varphi$ be a $\theta$-psh function with good Ricci curvature and let $\phi=P_\theta[\varphi]$ be the model potential associated to $\varphi$. 
\medskip

For $u,v\in \PSH(X,\theta)$ with $|u-v|$ bounded,  we then consider 
\begin{equation}\label{eq: energy gen down}
 E(\theta; u,v) := \frac{1}{(n+1)m_\phi} \sum_{k=0}^n \int_{X} (u-v)\, \theta_u^{k} \wedge \theta_{v}^{n-k}
 \end{equation}
 and
 \begin{equation}\label{eq: energy ricci down}
  E_{\Ric(\theta_\varphi)}(\theta; u,v) :=  \frac{1}{n\, m_\phi} \sum_{k=0}^{n-1} \int_{X} (u-v)\, \theta_u^{k} \wedge \theta_v^{n-k-1}\wedge \Ric(\theta_\varphi),
 \end{equation}
 where each integral in the left-hand side is defined as 
 \begin{equation}\label{def: energyricci}
  \int_{X} (u-v)\, \theta_u^{k} \wedge \theta_v^{n-k-1}\wedge T_1 - \int_{X} (u-v)\, \theta_u^{k} \wedge \theta_v^{n-k-1}\wedge T_2.  
   \end{equation}
Similarly, given $\alpha=T_1-T_2$, with $T_i$ closed positive $(1,1)$-currents, we set
    $$
    E_{\alpha}(\theta; u,v):=\frac{1}{n\, m_\phi} \sum_{k=0}^{n-1}  \Big(\int_{X} (u-v)\, \theta_u^{k} \wedge \theta_v^{n-k-1}\wedge T_1- \int_{X} (u-v)\, \theta_u^{k} \wedge \theta_v^{n-k-1}\wedge T_2\Big).
    $$
To avoid any ambiguity, we recall that the wedge products between positive currents has to be understood as the non-pluripolar product. We observe that the above definition is independent of the choice of the two positive currents $T_1, T_2$. 
Note also that thanks to \cite[Lemma 5.6]{DDL6} if $\phi$ is a model potential and $u \in \mathcal{E}(X,\theta,\phi)$ then 
$E(\theta,u,\phi)$ is finite if and only if $u \in \mathcal{E}^1(X,\theta,\phi).$
 
 \medskip

Taking inspiration from the Kähler setting (see also \cite{DNL21}), we define the Mabuchi functional relative to a $(X,\theta_\varphi)$ as
\begin{equation}
    \label{eq: K-energy}
 	\cM_{\theta, \varphi}(u) := \bar{S}_{\varphi} E(\theta; u,{\varphi}) - n E_{\Ric (\theta_\varphi)} (\theta; u,{\varphi}) +\Ent(u,\varphi), \qquad u \simeq \varphi
\end{equation}
for any $u\in \mathcal{E}(X,\theta,\phi)$ with $\phi$-relative minimal singularities where $\Ent(u,\varphi):=\Ent(\theta_u^n m_\phi^{-1}, \theta_\varphi^n m_\phi^{-1})$ and
 \[
\bar{S}_{\varphi}:= \frac{n}{m_\phi}\int_{X}\Ric(\theta_\varphi) \wedge \theta_\varphi^{n-1}.
 \]

\medskip
We start with a Proposition which generalizes \cite[Theorem 3.12]{DDL1}:

\begin{prop}\label{prop: energy linear}
Let $u_0, u_1\in \cE^1(X, \theta)$ and let $u_t$ be the psh geodesic joining $u_0, u_1$.
Then $t\rightarrow E(\theta; u_t, V_\theta)$ is linear.
\end{prop}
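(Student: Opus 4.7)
\smallskip
\noindent\textbf{Proof proposal.} The plan is to follow the strategy of \cite[Theorem 3.12]{DDL1}: first reduce to the case of $V_\theta$-relative minimal singularities, then prove linearity there via the homogeneous complex Monge--Amp\`ere equation satisfied by the complexified weak geodesic, and finally pass to the limit using the continuity of $E(\theta;\,\cdot\,,V_\theta)$ under decreasing sequences in $\mathcal{E}^1(X,\theta)$.

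\smallskip
For $j\in\N$ and $\alpha\in\{0,1\}$ I would set $u_\alpha^j:=\max(u_\alpha,V_\theta-j)$, so that $u_\alpha^j\searrow u_\alpha$ and each $u_\alpha^j$ has minimal singularities. Let $u_t^j$ be the psh geodesic connecting $u_0^j$ and $u_1^j$. A comparison argument at the level of the subgeodesic families $\mathcal{S}_{[0,1]}(u_0^j,u_1^j)$ gives $u_t^{j+1}\le u_t^j$, and the decreasing limit is a subgeodesic between $u_0$ and $u_1$ lying above any competitor, hence equals $u_t$. By \eqref{eq: Lip} each $u_t^j$ has minimal singularities.

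\smallskip
Next, for fixed $j$, I would show that $t\mapsto E(\theta;u_t^j,V_\theta)$ is affine. The complexified potential $U^j(x,z):=u^j_{\log|z|}(x)$ is a bounded $\pi^*\theta$-psh function on $\mathrm{Amp}(\theta)\times A$ solving the homogeneous equation $(\pi^*\theta+dd^c U^j)^{n+1}=0$ in the pluripotential sense. I would approximate $U^j$ by $\varepsilon$-geodesics $U^j_\varepsilon$, i.e.\ solutions of $(\pi^*\theta+dd^c U^j_\varepsilon)^{n+1}=\varepsilon\,\Omega^{n+1}$ for some reference K\"ahler form $\Omega$ on a compactification of $\mathrm{Amp}(\theta)\times A$, with the same boundary data. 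For these regularized (and essentially $C^{1,1}$) curves a direct calculation gives
\[
\tfrac{d^2}{dt^2} E(\theta;u^j_{t,\varepsilon},V_\theta)=\tfrac{1}{\mathrm{Vol}(\theta)}\!\int_X \bigl(\ddot u^j_{t,\varepsilon}\,\theta^n_{u^j_{t,\varepsilon}} - n\,d\dot u^j_{t,\varepsilon}\wedge d^c\dot u^j_{t,\varepsilon}\wedge \theta^{n-1}_{u^j_{t,\varepsilon}}\bigr),
\]
and the standard identity $(\pi^*\theta+dd^c U)^{n+1}=(\ddot u_t-|d\dot u_t|^2_{\theta_{u_t}})\,\theta^n_{u_t}\wedge i\,dz\wedge d\bar z$ yields that this second derivative is $O(\varepsilon)$. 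Letting $\varepsilon\to 0^+$, combined with the Lipschitz bound \eqref{eq: Lip} to control continuity in $t$, shows that the second distributional derivative of $t\mapsto E(\theta;u^j_t,V_\theta)$ vanishes on $(0,1)$; therefore this function is affine.

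\smallskip
Finally, $u_t^j\searrow u_t$ and $u_t\in\mathcal{E}^1(X,\theta)$ (by convexity of $\mathcal{E}^1(X,\theta)$ together with $u_t\ge P_\theta(u_0,u_1)\in\mathcal{E}^1(X,\theta)$, see \cite[Theorem 2.13]{DDL1}). The continuity of $E(\theta;\,\cdot\,,V_\theta)$ along decreasing sequences in $\mathcal{E}^1(X,\theta)$ (cf.\ Remark \ref{rk:conv_incr} and the monotone continuity results in \cite{BEGZ10, DDL6}) gives $E(\theta;u_t^j,V_\theta)\to E(\theta;u_t,V_\theta)$ pointwise in $t$, and a pointwise limit of affine functions is affine.

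\smallskip
The main obstacle will be the second step: rigorously justifying that the second time-derivative of $E$ vanishes along a weak geodesic with minimal singularities in the big setting, since the needed regularity only holds on the Zariski open set $\mathrm{Amp}(\theta)$ and all computations must be carried out with the non-pluripolar Monge--Amp\`ere operator. Once the $\varepsilon$-geodesic approximation of the complexified geodesic and the homogeneous MA equation are used as in \cite{BEGZ10,DDL1}, the remaining steps are essentially formal.
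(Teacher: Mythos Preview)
Your approach is correct and structurally identical to the paper's: approximate $u_0,u_1$ by $u_i^j:=\max(u_i,V_\theta-j)$, use that the corresponding geodesics $u_t^j$ decrease to $u_t$, invoke linearity in the minimal-singularities case, and pass to the limit by monotone continuity of the energy.

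The only difference is that you re-sketch the proof of linearity for minimal singularities via $\varepsilon$-geodesics, whereas the paper simply cites \cite[Theorem~3.12]{DDL1} as a black box; the obstacle you flag in your second step is therefore already handled in that reference and need not be revisited. For the limit step the paper invokes \cite[Lemma~5.7]{DDL6} directly, which gives $E(\theta;u_t^j,V_\theta)\to E(\theta;u_t,V_\theta)$ along decreasing sequences in $\mathcal{E}^1(X,\theta)$. One minor slip: your phrase ``lying above any competitor'' is backwards---the point is that the decreasing limit $w_t$ is itself a subgeodesic with the correct boundary data, hence $w_t\le u_t$ by maximality of $u_t$, while $w_t\ge u_t$ is immediate from $u_t^j\ge u_t$.
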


\begin{proof}
    For $i=0,1$ and $C>0$, we set $u_i^C:= \max(u_i, V_\theta-C )$. Let $u_{t}^C$ be the psh geodesic joining $u_0^C$ and $u_1^C$. Observe that $u_t^C$ has minimal singularities. \\
    We claim that $u_t^C$ decreases to $u_t$. Indeed, since $u_i^C\geq u_i$, by comparison principle, $u_t^C$ is a decreasing sequence such that $u_t^C \geq u_t$. Hence $u_t^C \searrow w_t$ for some $\theta$-psh function $w_t\geq u_t$ joining $u_0$ and $u_1$. By maximality of the geodesic we infer that $w_t=u_t$. By \cite[Theorem 3.12]{DDL1}, $t\rightarrow E(\theta; u_t^C, V_\theta)$ is linear. Moreover by \cite[Lemma 5.7]{DDL6} we know that $E(\theta; u_t^C, V_\theta)$ converges to $E(\theta; u_t, V_\theta)$. It then follows that $t\rightarrow E(\theta; u_t, V_\theta)$, as limit of a linear function, is linear as well.
\end{proof}

The next statement slightly generalizes the cocycle property in \cite[Theorem 5.3]{DDL6}:
\begin{prop}\label{prop: cocy model}
Let $\phi$ be a model potential. Then for any $u,v\in \mathcal{E}^1(X,\theta, \phi)$, we have
\begin{equation}\label{prop:cocycleE1}
{E}(\theta; u, \phi)-{E}(\theta; v, \phi)={E}(\theta; u, v).
\end{equation}
\end{prop}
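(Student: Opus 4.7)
The plan is to reduce to the cocycle property already available for potentials with $\phi$-relative minimal singularities and then pass to the limit. Specifically, given $u,v\in\mathcal{E}^1(X,\theta,\phi)$, I would set
\[
u^C:=\max(u,\phi-C),\qquad v^C:=\max(v,\phi-C),\qquad C>0.
\]
Since $u$ and $v$ are more singular than $\phi$ (up to an additive constant), both $u^C$ and $v^C$ have $\phi$-relative minimal singularities, and by \cite[Lemma 5.6]{DDL6} they all still lie in $\mathcal{E}^1(X,\theta,\phi)$. Moreover $u^C\searrow u$ and $v^C\searrow v$ as $C\nearrow+\infty$.

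For such $\phi$-relatively bounded potentials, the cocycle in \cite[Theorem 5.3]{DDL6} applies and gives
\[
E(\theta;u^C,\phi)-E(\theta;v^C,\phi)=E(\theta;u^C,v^C).
\]
Passing to the limit $C\to+\infty$, the left-hand side converges to $E(\theta;u,\phi)-E(\theta;v,\phi)$ by the monotone continuity of the relative energy recorded in \cite[Lemma 5.7]{DDL6}, exactly as in the proof of Proposition \ref{prop: energy linear}.

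The main obstacle is the convergence of the right-hand side, since both arguments of the energy vary with $C$ and \cite[Lemma 5.7]{DDL6} cannot be applied directly. I would handle it by expanding each of the $n+1$ defining integrals as
\[
\int_X (u^C-v^C)\,\theta_{u^C}^k\wedge\theta_{v^C}^{n-k}=\int_X(u^C-\phi)\,\theta_{u^C}^k\wedge\theta_{v^C}^{n-k}-\int_X(v^C-\phi)\,\theta_{u^C}^k\wedge\theta_{v^C}^{n-k}.
\]
Since $u^C,v^C\in\mathcal{E}(X,\theta,\phi)$ decrease respectively to $u,v$ with constant total Monge--Amp\`ere mass $m_\phi$, Remark \ref{rk:conv_incr} ensures weak convergence of the mixed currents $\theta_{u^C}^k\wedge\theta_{v^C}^{n-k}$ to $\theta_u^k\wedge\theta_v^{n-k}$. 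Together with the plurifine locality (to separate the integrand from the polar loci of $u$ and $v$) and the finite-energy integrability $u-\phi,v-\phi\in L^1(\theta_u^k\wedge\theta_v^{n-k})$ guaranteed by $u,v\in\mathcal{E}^1(X,\theta,\phi)$, a standard monotone truncation argument on the non-positive integrands $u^C-\phi$ and $v^C-\phi$ (identical in spirit to that behind \cite[Lemma 5.7]{DDL6}) then yields
\[
E(\theta;u^C,v^C)\longrightarrow E(\theta;u,v),
\]
and concludes the proof.
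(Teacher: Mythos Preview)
Your approach is essentially identical to the paper's: truncate to $u^C,v^C$, apply the cocycle from \cite[Theorem 5.3]{DDL6}, let $C\to\infty$, and split each term of $E(\theta;u^C,v^C)$ through $\phi$. The one step you gloss over as a ``standard monotone truncation'' is made explicit in the paper via the partial comparison principle \cite[Proposition 3.22]{DDL6}: after plurifine locality handles the region $\{\min(u,v)>\phi-C\}$, the remaining error $C\int_{\{u\leq\phi-C\}}\theta_{u^C}^k\wedge\theta_{v^C}^{n-k}$ is bounded by $2^{n-k}C\int_{\{u\leq\phi-C/2\}}\theta_{u^C}^n$, which tends to zero since $u\in\mathcal{E}^1(X,\theta,\phi)$.
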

For the proof we adapt the arguments in \cite[Proposition 2.5]{DDL1} We will refer to \eqref{prop:cocycleE1} as the \emph{cocycle property}.

\begin{proof}
	By \cite[Corollary 3.16]{DDL6} for $ k \in \{0,\ldots,n\},$ 
 $$\int_X \theta_u^k \wedge \theta_{\phi}^{n-k} = \int_X \theta_v^k \wedge \theta_{\phi}^{n-k}  $$ so we can assume that  $\max(u,v) \leq \phi \leq 0.$
 For $C>0$, we set $u^C:= \max(u, \phi -C)$, $v^C:= \max(v, \phi -C)$. By locality we have \{0,\ldots,n\}$ $
$$\id_{\{u>\phi-C\}} \theta_{u^C}^n= \id_{\{u>\phi-C\}} \theta_{u}^n,$$
and more generally, for any $k\in \{0,...,n\}$
\begin{equation}\label{loc property}
\id_{\{\min(u,v)>\phi-C\}} \theta_{u^C}^k \wedge \theta_{v^C}^{n-k} = \id_{\{\min(u,v)>\phi-C\}} \theta_{u}^k \wedge \theta_{v}^{n-k}.
\end{equation}
Since $\int_X \theta_{u}^n=\int_X \theta_{u^C}^n$ we can write
\begin{eqnarray}
\label{eq: E1 by mass}
\lim_{C\to +\infty} C\int_{\{u\leq \phi-C\}}\theta_{u^C}^n & = & \lim_{C\to +\infty} C\int_{\{u\leq \phi-C\}}\theta_{u}^n  \\
&\leq & \lim_{C\to +\infty} \int_{\{u\leq\phi-C\}} (\phi-u)\theta_{u}^n = 
\int_{\{u = - \infty\}} (\phi-u)\theta_{u}^n = 0.
\nonumber
\end{eqnarray}

 By \cite[Theorem 5.3]{DDL6} we have that
 $${E}(\theta; u^C, \phi)-{E}(\theta; v^C, \phi)={E}(\theta; u^C, v^C).$$
 
 By \cite[Lemma 4.12]{DDL2} we already know that ${E}(\theta; u^C, \phi)$ and ${E}(\theta; v^C, \phi)$ decrease to ${E}(\theta; u, \phi)$ and ${E}(\theta; v, \phi)$, respectively. 
 We want to prove that ${E}(\theta; u^C, v^C)$ decreases to ${E}(\theta; u, v)$, i.e. that for any $k\in \{0,...,n\}$, 
    \begin{equation}\label{eq: basic I energy 11}
		\lim_{C\to +\infty}\int_X (u^C-v^C) \theta_{u^C}^k \wedge \theta_{v^C}^{n-k}  =\int_X (u-v) \theta_{u}^k \wedge \theta_{v}^{n-k}. 
	\end{equation}
    Clearly, it suffices to check that
	\begin{equation}\label{eq: basic I energy 1}
		\lim_{C\to +\infty}\int_X (u^C-\phi) \theta_{u^C}^k \wedge \theta_{v^C}^{n-k}  =\int_X (u-\phi) \theta_{u}^k \wedge \theta_{v}^{n-k} 
	\end{equation}

 and that 
 \begin{equation}\label{eq: basic I energy 1a}
		\lim_{C\to +\infty}\int_X (\phi-v^C) \theta_{u^C}^k \wedge \theta_{v^C}^{n-k}  =\int_X (\phi-v) \theta_{u}^k \wedge \theta_{v}^{n-k}.
	\end{equation}
In the following we prove \eqref{eq: basic I energy 1}. The same arguments will give \eqref{eq: basic I energy 1a}.\\
	We decompose the integral into two parts $\int_{\{\min(u,v)>\phi-C\}}$ and $\int_{\{\min(u,v)\leq \phi-C\}}$, by  the locality property \eqref{loc property}
 we have  $$\int_{\{\min(u,v)>\phi-C\}} (u^C-\phi) \theta_{u^C}^k \wedge \theta_{v^C}^{n-k} $$
  $$ = \int_{\{\min(u,v)>\phi-C\}} (u-\phi) \theta_{u}^k \wedge \theta_{v}^{n-k} \to$$
   $$\int_X (u-\phi) \theta_{u}^k \wedge \theta_{v}^{n-k} $$ as $C \to + \infty.$
Noting that $\{\min(u,v)\leq \phi-C\} \subseteq \{u\leq \phi-C\}\cup \{v\leq \phi-C\}$, we see that proving \eqref{eq: basic I energy 1} boils down to showing that 
	\begin{equation}
		\label{eq: energy estimate 1}
		\lim_{C\to +\infty} C \int_{\{u\leq \phi-C\}}\theta_{u^C}^k \wedge \theta_{v^C}^{n-k} =0,\  \text{and}\ \lim_{C\to +\infty} C \int_{\{v\leq \phi-C\}}\theta_{u^C}^k \wedge \theta_{v^C}^{n-k} =0, \  \forall k. 
	\end{equation}

	We will prove the first equality and the same arguments apply to prove the second one. 
	Observing that $\phi-C \leq v^C \leq \phi$ we have the inclusion 
	\[
	\{u\leq \phi -C\} \subset \left \{u^C\leq \frac{v^C+\phi -C}{2}\right \} \subset  \{u\leq \phi-C/2\}.
	\]
	Using the partial comparison principle \cite[Proposition 3.22]{DDL6} and that 
    $$  2^{k-n}  \theta_{v^C}^{n-k} \leq \left(\frac{\theta}{2}+dd^c \frac{v^C}{2}\right)^{n-k} \leq   \theta_{\frac{v^C+\phi -C}{2}}^{n-k}$$ 
    we get 
	\begin{eqnarray*}
		C \int_{\{u\leq \phi-C\}}\theta_{u^C}^k \wedge \theta_{v^C}^{n-k}  &\leq & 2^{n-k} C   \int_{\left\{u^C\leq \frac{v^C+\phi-C}{2}\right\}}\theta_{u^C}^k \wedge \theta_{\frac{v^C+\phi -C}{2}}^{n-k}\\
		&\leq & 2^{n-k} C  \int_{\left\{u^C\leq \frac{v^C+V_{\theta}-C}{2}\right\}}\theta_{u^C}^n\\
		&\leq & 2^{n-k} C  \int_{\{u\leq \phi-C/2\}}\theta_{u^C}^n\\
		%&\leq & 2^{n-k} C \int_{\{u\leq \phi-C/2\}}\theta_{u}^n,
	\end{eqnarray*}
    %where in the last inequality we used Lemma \ref{lem: plurifine_prop} and $1 = \int_X \theta_{u}^n= \int_X \theta_{u^C}^n$.
From the above and \eqref{eq: E1 by mass} we obtain \eqref{eq: energy estimate 1}, completing the proof. 
\end{proof}

\subsection{From model to divisorial singularities} 
\label{sec:Analytic_Singularities}

We introduce a set of model potentials with which we will work through the paper.
\begin{definition}\label{defi:N_theta}
    Let $\mN_\theta\subset \PSH(X,\theta)$ be the set of all model potential $\phi$ such that there exists a modification (i.e. bimeromorphic holomorphic map) $\pi:Y\to X$  from $Y$ a compact Kähler manifold of dimension $n$ such that
    $$
    \pi^*\theta_\phi= [F]+S
    $$
    for an effective $\R$-divisor $F$ and a closed, positive current $S$ with minimal singularities, representing a big and nef class.
\end{definition}

\begin{remark}
In the definition above we restrict our attention to model potentials since if a $\theta$-psh function $u$ admits a modification $\pi:Y\to X$ such that $ \pi^*\theta_u= [F]+S$, for an effective $\R$-divisor $F$ and a closed, positive current $S$ with minimal singularities, representing a big and nef class, then $u$ is of model type. \\
Indeed, since $\pi^*\theta_u= [F]+S$ and $\{S\}$ is big, we have $\int_X \theta_u^n= \int_Y S^n>0$. By \cite[Theorem 1.3]{DDL2} $u\in \cE(X, \theta, P[u])$. Moreover, by \cite[Lemma 5.1]{DDL6} we infer that $u$ and $P[u]$ have the same multiplier ideal sheaf and in particular $u\circ \pi$ and $P[u]\circ \pi$ have the same Lelong numbers \cite[Theorem A]{BFJ08}. Thus $\pi^*\theta_{P[u]}-[F]$ is a positive and closed $(1,1)$-current in the cohomology class $\{S\}$.
Thus $\pi^*\theta_{P[u]}= [F]+\tilde{S}$, where $\{\tilde{S}\}=\{S\}$. Since $P[u]$ is less singular than $u$, $\tilde{S}$ is less singular than $S$, hence it has minimal singularities.
\end{remark}

The following lemma lists some properties of $\mN_\theta$.

\begin{lemma}\label{lem:properties N_theta} The followings hold:
    \begin{itemize}
        \item[i)] For $\phi\in \mN_\theta$ we have
        $$
        m_\phi= \vol(S) 
        $$
        where $ \vol(S) $ is the volume of the nef and big class $\{S\}$. In particular $m_\phi>0$, and letting $\eta$ be a smooth and closed form representing $\{S\}$ we have $m_\phi=\int_Y \eta^n$.
        \item[ii)] if $\psi\in\PSH(X,\theta)$ is a function with analytic singularities such that $\int_X \theta_\psi^n>0$, then $\phi:=P_\theta[\psi]\in\mN_\theta$ and any associated big and nef class admits bounded potentials.
    \end{itemize}
\end{lemma}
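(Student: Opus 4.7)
For part (i), I integrate the Monge-Amp\`ere measure through the modification $\pi$. Since $\pi:Y\to X$ is bimeromorphic between compact K\"ahler manifolds, the non-pluripolar Monge-Amp\`ere mass is preserved: $m_\phi=\int_X\theta_\phi^n=\int_Y(\pi^*\theta_\phi)^n$. Expanding $(\pi^*\theta_\phi)^n=([F]+S)^n$ as a non-pluripolar product, every term containing a factor of $[F]$ is concentrated on the pluripolar divisor $F$ and therefore vanishes, so only $S^n$ survives. Since $S$ has minimal singularities in the class $\{S\}$, this gives $\int_Y S^n=\vol(\{S\})$, and nefness of $\{S\}$ yields $\vol(\{S\})=\{S\}^n=\int_Y\eta^n$ for any smooth $\eta\in\{S\}$. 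Strict positivity $m_\phi>0$ is then automatic from bigness of $\{S\}$.

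For part (ii), the plan is first to exhibit a concrete modification via Hironaka's log resolution of the analytic ideal of $\psi$, and then transfer the resulting decomposition to the envelope $\phi=P_\theta[\psi]$. Writing $\psi=c\log\sum_j|f_j|^2+g_{\mathrm{loc}}$ locally, I would take a modification $\pi:Y\to X$ with $Y$ compact K\"ahler such that $\pi^{-1}(f_j)\cdot\mathcal{O}_Y=\mathcal{O}_Y(-F)$ for an effective divisor $F$. Fixing a smooth Hermitian metric $\ell_F$ on $\mathcal{O}_Y(F)$ with Chern curvature $\alpha_F\in\{F\}$ and canonical section $s_F$, we have $\pi^*\psi=c\log|s_F|^2_{\ell_F}+g$ globally with $g$ smooth, yielding the clean decomposition
\[\pi^*\theta_\psi=c[F]+R,\qquad R:=\pi^*\theta-c\alpha_F+dd^cg,\]
where $R$ is a smooth semi-positive form. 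Its class $\{\pi^*\theta\}-c\{F\}$ is big (since $\int_YR^n=\int_X\theta_\psi^n>0$) and nef (containing the smooth semi-positive form $R$), and admits bounded potentials since $0$ is $R$-psh.

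Next I transfer this decomposition to $\phi:=P_\theta[\psi]$. The envelope $\phi$ is a model potential with $m_\phi=m_\psi>0$ by \cite[Theorem 3.14]{DDL6}; by \cite[Lemma 5.1]{DDL6} $\phi$ and $\psi$ share the same multiplier ideal sheaves, and \cite[Theorem A]{BFJ08} then implies that $\pi^*\phi$ and $\pi^*\psi$ have the same Lelong numbers along every prime divisor of $Y$. In particular the Lelong number of $\pi^*\phi$ along each component $E$ of $F$ equals $c\cdot\mathrm{mult}_EF$, so $S:=\pi^*\theta_\phi-c[F]$ is a closed positive $(1,1)$-current lying in the same big and nef class $\{R\}$, giving the decomposition $\pi^*\theta_\phi=c[F]+S$ required by Definition \ref{defi:N_theta}.

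The main obstacle is verifying that $S$ has minimal singularities in $\{R\}$. To overcome it, write $S=\alpha_0+dd^ch$ with $\alpha_0:=\pi^*\theta-c\alpha_F$ smooth and $h:=\pi^*\phi-c\log|s_F|^2_{\ell_F}$. The matching Lelong numbers allow $h$ to extend across $F$ as a globally $\alpha_0$-psh function on $Y$ with zero Lelong numbers along each component of $F$; upper semi-continuity on the compact manifold $Y$ then forces $\sup_Yh<+\infty$. For a lower bound, the relation $\pi^*\phi\geq\pi^*\psi-C$ (since $\phi$ is less singular than $\psi$) translates into $h\geq g-C$, bounded below because $g$ is smooth on compact $Y$. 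Hence $h$ is globally bounded, and since $V_{\alpha_0}$ is itself bounded in the big and nef class $\{R\}$, the inequality $|h-V_{\alpha_0}|\leq C'$ shows $S$ has minimal singularities, concluding $\phi\in\mN_\theta$.
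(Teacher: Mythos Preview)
Your argument is structurally the same as the paper's, and part (i) is identical. In part (ii) you have one real gap: you assert that after log resolution the global remainder $g$ in $\pi^*\psi=c\log|s_F|^2_{\ell_F}+g$ is \emph{smooth}, and then deduce nefness of $\{\pi^*\theta\}-c\{F\}$ from the existence of the ``smooth semi-positive form $R$''. Under the paper's definition of analytic singularities (equation~\eqref{eqn:An_Sing}), the local remainder is only \emph{bounded}, not smooth; the log resolution makes $\log\sum_j|v_j|^2$ smooth, but $g_{\mathrm{loc}}\circ\pi$ remains merely bounded, so $R$ is a closed positive current with bounded potential, not a smooth form. Your nefness argument therefore fails as written. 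The paper fixes this by invoking \cite[Propositions 3.2 and 3.6]{Bou04}: a big class that contains a closed positive current with bounded (hence minimal-singularity, hence zero-Lelong-number) potential is nef. Your bigness argument and your transfer to $\phi=P_\theta[\psi]$ via \cite[Lemma 5.1]{DDL6} and \cite[Theorem A]{BFJ08} are correct and match the paper.

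One point where you are slightly more economical than the paper: to show $S$ has minimal singularities, the paper appeals to \cite[Proposition 5.24]{DDL6} to get the two-sided bound $|\phi-\psi|\leq C$. You only use the trivial inequality $\phi\geq \psi-C$ for the lower bound on $h$, and bound $h$ from above simply because it is quasi-psh on the compact manifold $Y$. This works and avoids the extra citation---but note that the lower bound $h\geq g-C$ only gives boundedness of $h$ once you know $g$ is bounded (which it is), not smooth.
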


\begin{proof}
   We start proving $(i)$. As the non-pluripolar product does not charge pluripolar sets such as divisors we have
    $$
    m_\phi=\int_X\theta_\phi^n=\int_Y S^n=\vol(S)
    $$
    where the last equality follows from the fact that $S$ has minimal singularities. As the class $\{S\}$ is big we deduce that $m_\phi>0$ and the equality $\vol(S)=\int_Y\eta^n$ for a smooth and closed form $\eta$ representing $\{S\}$ follows from the fact that the class is nef.\newline
    We now prove $(ii)$. The local holomorphic functions $f_1,\dots,f_N$ such that
    $$
    \psi= c\log \sum_{j=1}^N \lvert f_j\rvert^2 +g
    $$
    as in \eqref{eqn:An_Sing} generates a coherent ideal sheaf $\mathcal{I}$. Taking a log resolution of $(X,\mathcal{I})$ yields a modification $\pi: Y\to X$ such that
    $$
    \pi^*\theta_\psi= T+c[F]
    $$
    where $F$ is an effective divisor such that $\mathcal{O}_Y(-F)=\pi^{-1}\mathcal{I}$ and $T=\eta+dd^c \tilde{\psi}$ for a smooth and closed form $\eta$ and for $\tilde{\psi}\in \PSH(Y,\eta)\cap L^\infty(Y)$. \\
    Indeed,
$$
\psi\circ \pi=c\log \sum_{j=1}^N \lvert f_j\circ \pi\rvert^2 + g\circ \pi= c\log \left(\prod_{l=1}^M \lvert u_l\rvert^2\sum_{j=1}^N \lvert v_j \rvert^2 \right)+g\circ \pi=c\sum_{l=1}^M\log \lvert u_l\rvert^2 + c\log \sum_{j=1}^N \vert v_j\rvert^2 + g\circ \pi
$$
where $u_l$, $l=1, \cdots,M$ are the holomorphic functions which divides all the $f_j\circ \pi$ and which define the divisor $F$. Note that $v_j$, $j=1, \cdots, N$, do not any common zeros by construction, and so $\log \sum_{j} \vert v_j\rvert^2$ is bounded.

    Next, by \cite[Proposition 5.24]{DDL6}, we can infer that $\psi$ has model type singularities, i.e. $|P_\theta[\psi]-\psi|\leq C$, $C>0$. Set $\phi:=P_\theta[\psi]$. By \cite[Lemma 5.1]{DDL6} we infer that $\psi$ and $\phi$ have the same multiplier ideal sheaf and in particular $\psi\circ \pi$ and $\phi\circ \pi$ have the same Lelong numbers \cite[Theorem A]{BFJ08}. Thus $\pi^*\theta_\phi-c[F]$ is a positive and closed $(1,1)$-current in the cohomology class $\{T\}$. Since $\psi\circ \pi$ and $\phi\circ \pi$ have the same singularities, we have
    $$
    \pi^*\theta_\phi=S+c[F]
    $$
    for a closed and positive current $S$ with the same singularities of $T$, i.e. $S=\eta+dd^c \tilde{\phi}$ for  $\tilde{\phi}\in \PSH(Y,\eta)\cap L^\infty(Y)$. In particular $S$ is a current with minimal singularities, hence by combining \cite[Propositions 3.2 and 3.6]{Bou04} we find that the class $\eta$ is nef. Finally we have
    $$
    0<\int_X \theta_\psi^n=\int_X \theta_\phi^n=\int_Y S^n=\vol(S)
    $$
    by the same calculation performed in the proof of (i). Hence $\{S\}$ is big, which concludes the proof.
\end{proof}

Now, suppose $\phi\in \mN_\theta$, and consider $\pi:Y\to X$ a modification such that
\begin{equation}
    \label{eqn:Almost_Anal_Sing}
    \pi^*\theta_\phi=\eta_{\tilde{\phi}} +[F]
\end{equation}
for an effective $\R$-divisor $F$, for $\eta$ closed smooth $(1,1)$-form such that $\{\eta\}$ is big and nef, and for $\tilde{\phi}\in \PSH(Y,\eta)$ normalized such that $\sup_Y\tilde{\phi}=0$. Let also $\tilde{\omega}$ be a fixed K\"ahler form on $Y$ normalized such that $\int_Y \tilde{\omega}=1$. 
\medskip 

Let $E_1,\dots,E_m$ be the exceptional divisors of $\pi:Y\to X$, $a_j>0$ such that $K_{Y/X}=\sum_{j=1}^m a_j E_j.$
Recall  that at the level of cohomology classes we have 
\begin{equation}\label{relcan} 
K_Y = \pi^*K_X + K_{Y/X}. \end{equation}

Let $h_j$ be smooth metrics on $\mathcal{O}_Y(E_j)$ such that the curvature form $\Theta:=\sum_{j=1}^m a_j\, \Theta(h_j)$ satisfies
$$
\Theta=\pi^*\Ric(\omega)-\Ric(\tilde{\omega}).
$$
Then there exist holomorphic sections $s_j$ of $\mathcal{O}_Y(E_j)$ such that
\begin{equation}\label{eqn:pullback of Ricci}
    \Ric(\pi^*\omega)
    =\pi^*\Ric(\omega)-\Theta-\sum_{j=1}^m a_j dd^c \log \lvert s_j\rvert_{h_j}^2=\Ric(\tilde{\omega})-\sum_{j=1}^m a_j dd^c \log \lvert s_j\rvert_{h_j}^2.
\end{equation}
Set $f:=\sum_{j=1}^m a_j \, \log \lvert s_j\rvert_{h_j}^2$. Then $\Theta+dd^c f=\sum_{j=1}^m a_j [E_j]$ and $\pi^* \omega^n= e^f \tilde{\omega}^n$. Also, observe that $\pi|_{Y\setminus \cup_j E_j}$ is a biholomorphism.\\

The goal of this section is to prove the following result:

\begin{theorem}
    \label{thm:Convexity_Mabuchi_Anal_Sing} 
 Let $\varphi\in \cE(X,\theta,\phi)$ such that $\theta_\varphi^n= m_\phi\, \omega^n$, let $u_0,u_1\in \PSH(X,\theta)$ with $\phi$-relative minimal singularities and let $(u_t)_{t\in [0,1]}$ be the psh geodesic connecting $u_0$ and $u_1$. Then
 \begin{equation}
     \label{eqn:Formula almost convexity for analytic singularities}
     \cM_{\theta,\varphi}(u_t)\leq t\cM_{\theta,\varphi}(u_1)+(1-t)\cM_{\theta,\varphi}(u_0) +\frac{n\lVert u_0-u_1\rVert_\infty}{2 m_\phi} \{\eta^{n-1}\}\cdot K_{Y/X}
 \end{equation}
  %\sum_{j=1}^{m} a_j \vol(\eta_{|E_j})
 for any $t\in [0,1]$.
\end{theorem}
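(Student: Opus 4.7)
My plan is to lift everything to the resolution $\pi\colon Y\to X$ of \eqref{eqn:Almost_Anal_Sing} and compare the Mabuchi functional on $(X,\theta)$ with the corresponding one on the big-and-nef pair $(Y,\eta)$, where convexity is already known by Di Nezza--Lu \cite{DNL21}. Given $u\in\PSH(X,\theta)$ with $\phi$-relative minimal singularities, I set $\tilde u:=\tilde\phi+\pi^*(u-\phi)$ on $Y$, and likewise $\tilde u_0,\tilde u_1,\tilde\varphi$. Each is an $\eta$-psh function with $\pi^*\theta_u=\eta_{\tilde u}+[F]$ and $\tilde\phi$-relative minimal singularities; since $\{\eta\}$ is big and nef, $\tilde\phi$ is bounded, and so are all these lifts. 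A direct inspection of the complexified potential on $Y\times A$ shows that $t\mapsto \tilde u_t$ is the psh geodesic in $\PSH(Y,\eta)$ from $\tilde u_0$ to $\tilde u_1$; in particular \eqref{eq: Lip} yields $|\dot{\tilde u}_t|\leq \|u_0-u_1\|_\infty$ a.e.

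Next I would establish the master identity
\[
\cM_{\theta,\varphi}(u)= \cM^{Y}_{\eta,\tilde\varphi}(\tilde u)+\frac{n\,H_\phi}{m_\phi}\, E^{Y}(\eta;\tilde u,\tilde\varphi)-n\,E^{Y}_{K_{Y/X}}(\eta;\tilde u,\tilde\varphi),
\]
where $\cM^{Y}_{\eta,\tilde\varphi}, E^{Y}, E^{Y}_{K_{Y/X}}$ are the analogues of \eqref{eq: energy gen down}, \eqref{eq: energy ricci down} and \eqref{eq: K-energy} built on $(Y,\eta)$ with $\tilde\omega^n$ as the reference volume form, $H_\phi:=\{\eta\}^{n-1}\cdot K_{Y/X}$, and I view $K_{Y/X}=\sum_j a_jE_j$ as an effective positive $(1,1)$-current. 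The identity comes from combining: (i) $\Ric(\theta_\varphi)=\Ric(\omega)$ (since $\theta_\varphi^n=m_\phi\omega^n$); (ii) the identity $\pi^*\Ric(\omega)=\Ric(\eta_{\tilde\varphi})+[K_{Y/X}]$, a direct consequence of \eqref{eqn:pullback of Ricci} and the Monge-Amp\`ere equality $\eta_{\tilde\varphi}^n=m_\phi e^f\tilde\omega^n$; (iii) the cohomological identity $\bar S_\varphi=\bar S^{Y}_{\tilde\varphi}+nH_\phi/m_\phi$ obtained by integrating (ii) against $\eta_{\tilde\varphi}^{n-1}$ and using that intersections of bounded potentials in a big-and-nef class are cohomological; (iv) Lemma \ref{lem:Holo} for the entropy, together with the pullback invariance of the non-pluripolar Monge-Amp\`ere product for the $E$ and $E_{\Ric(\eta_{\tilde\varphi})}$ terms (the divisor $[F]$ does not contribute).

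Finally, I would combine three convexity estimates through this identity. First, \cite{DNL21} applied to the big-and-nef pair $(Y,\eta)$ gives $\cM^{Y}_{\eta,\tilde\varphi}(\tilde u_t)\leq (1-t)\cM^{Y}_{\eta,\tilde\varphi}(\tilde u_0)+t\,\cM^{Y}_{\eta,\tilde\varphi}(\tilde u_1)$ (its hypotheses are satisfied since $\eta_{\tilde\varphi}^n=m_\phi e^f\tilde\omega^n$ has $L^p$ density). Second, Proposition \ref{prop: energy linear} transferred to $(Y,\eta)$ shows that $t\mapsto E^{Y}(\eta;\tilde u_t,\tilde\varphi)$ is affine. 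Third, writing $g(t):=E^{Y}_{K_{Y/X}}(\eta;\tilde u_t,\tilde\varphi)$, a telescoping-sum computation together with an integration by parts yields the clean formula
\[
g'(t)=\frac{1}{m_\phi}\int_Y \dot{\tilde u}_t\, \eta_{\tilde u_t}^{n-1}\wedge K_{Y/X},
\]
and combining $|\dot{\tilde u}_t|\leq\|u_0-u_1\|_\infty$ with the cohomological identity $\int_Y\eta_{\tilde u_t}^{n-1}\wedge K_{Y/X}=H_\phi$ gives the Lipschitz bound $|g'(t)|\leq\|u_0-u_1\|_\infty H_\phi/m_\phi$. Any $L$-Lipschitz function on $[0,1]$ has convexity excess at most $L/2$, so the contribution of $-n\,g$ in the master identity is at most $n\|u_0-u_1\|_\infty H_\phi/(2m_\phi)$, which is precisely the error term in \eqref{eqn:Formula almost convexity for analytic singularities}. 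The hardest part of the argument is the rigorous justification of the derivative formula for $g$: since the geodesic is only Lipschitz in $t$ and $K_{Y/X}$ is a singular positive current, the integration by parts must be performed either by truncating $\tilde u_t$ via $\max(\tilde u_t,\tilde\phi-C)$, invoking plurifine locality, and letting $C\to\infty$, or by smoothing the geodesic through envelopes built from Kähler-class approximations $\eta+\varepsilon\tilde\omega$ on $Y$ and passing to the limit with Theorem \ref{thm:weak convergence}.
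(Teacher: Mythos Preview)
Your overall strategy matches the paper's: lift via $\mathbf{L}$ to $(Y,\eta)$, invoke the big-and-nef convexity of \cite{DNL21}, and bound a correction term living on $K_{Y/X}$ by the Lipschitz estimate \eqref{eq: Lip}. Your Lipschitz bound on $g$ yields exactly the same $2t(1-t)\|u_0-u_1\|_\infty$ excess that the paper obtains via the cocycle property of the restricted energies $E_{E_j}$; indeed your $E^Y_{K_{Y/X}}$ is $\frac{1}{m_\phi}\sum_j a_j\vol(\eta_{|E_j})E_{E_j}$ once one interprets the wedge with $[E_j]$ via Bedford--Taylor.

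There is, however, a genuine gap in your choice of reference on $Y$. You work with $\cM^Y_{\eta,\tilde\varphi}$ and invoke \cite[Theorem~4.2]{DNL21} for its convexity, but $\eta_{\tilde\varphi}^n=m_\phi e^f\tilde\omega^n$ vanishes along $\bigcup E_j$, so $\Ric(\eta_{\tilde\varphi})=\Ric(\tilde\omega)+\Theta-[K_{Y/X}]$ is singular; having $L^p$ density does not suffice. \cite{DNL21} is proved for a reference whose Monge--Amp\`ere is a smooth volume form. Worse, if you expand your master identity using this expression for $\Ric(\eta_{\tilde\varphi})$, the $+nE^Y_{[K_{Y/X}]}$ hidden inside $\cM^Y_{\eta,\tilde\varphi}$ cancels exactly your external $-nE^Y_{K_{Y/X}}$: you have split off a term only to add it back, and proving convexity of the piece you call $\cM^Y_{\eta,\tilde\varphi}$ already requires controlling that same singular term. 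The paper resolves this by introducing an \emph{auxiliary} $w\in\cE(Y,\eta)$ with $\eta_w^n=m_\phi\,\tilde\omega^n$, so that $\Ric(\eta_w)=\Ric(\tilde\omega)$ is smooth and \cite{DNL21} applies cleanly to $\cM_{\eta,w}$. The resulting comparison formula (Proposition~\ref{cor:Mabuchi_UpDown}) then genuinely isolates the restricted energies as the only non-convex part, but deriving it requires the careful integration-by-parts of Step~2 there, tracking the term $\int_Y f\,\eta_v^n$; this is the substance you have skipped.

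A smaller point: ``$\{\eta\}$ big and nef'' does not imply $\tilde\phi$ is bounded on $Y$; it only gives minimal singularities, locally bounded on $\Amp(\eta)$ (boundedness holds in the analytic-singularity case of Lemma~\ref{lem:properties N_theta}(ii), not in general for $\phi\in\mN_\theta$). Consequently the wedge products with $[E_j]$ and your integration by parts for $g'$ genuinely require an approximation by bounded $(\eta+\varepsilon\tilde\omega)$-potentials and a limit via Theorem~\ref{thm:weak convergence} and Lemma~\ref{lem:Restr of min sing}, as the paper does.
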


\medskip

In this first Lemma we show how to go from $(X,\theta,{\phi})$ to $(Y,\eta)$ and back. 
\begin{lemma}\label{lem:Bijection}
There exists a unique map 
$$\mathbf{L}:\PSH(X,\theta, {\phi})\to \PSH(Y,\eta)$$ such that for $u \in \PSH(X,\theta)$ we have 
$$ u \circ \pi  + \tilde{\phi} = \mathbf{L}(u) + \phi \circ \pi$$
where the function $\tilde{\phi}$ is defined in  \eqref{eqn:Almost_Anal_Sing}, (Sometimes for convenience we will simply write 
$\mathbf{L}(u) := (u-{\phi})\circ \pi + \tilde{\phi}$; note however that this equality makes sense only at points where $\phi \circ \pi \neq - \infty$). Moreover:
  \begin{itemize}
        \item[(i)] $\mathbf{L}$ is a bijection;
       
        \item[(ii)]  if $t\to u_t$ is a psh geodesic joining $u_0,u_1\in \mathcal{E}^1(X,\theta,\phi)$ then $t\to v_t:=\mathbf{L} (u_t)$ is a psh geodesic in $\mathcal{E}^1(Y, \eta)$ joining $v_0:=\mathbf{L}(u_0),v_1:=\mathbf{L}(u_1)$;
        
       \item[(iii)] The map $\mathbf{L}$ produces a bijection between $\mathcal{E}(X,\theta,\phi)$ and $\mathcal{E}(Y,\eta)$ (resp. $\mathcal{E}^p(X,\theta,\phi)$ and $\mathcal{E}^p(Y,\eta)$ for any $p\geq 1$); 
       
       \item[(iv)]  ${E}(\theta; u, w)={E}(\eta; \mathbf{L}(u),\mathbf{L}(w) )$ for any $u,w\in\mathcal{E}^1(X,\theta,\phi)$;
        
        \item[(v)]  $\Ent_\theta(u, w)=\Ent_\eta (\mathbf{L}(u), \mathbf{L}(w))$
        for any $u,w\in\mathcal{E}(X,\theta,\phi).$
   \item[(vi)] Let $u_X\in \PSH(X,\theta, \phi)$ and $v_Y:=\mathbf{L}(u_X)$. If the current $\eta_{v_Y}$ has good Ricci curvature then  $\theta_{u_X}$ does too. Furthermore
    \begin{equation}  \label{Poinc-Lel} 
    \Ric(\eta_{v_Y}) = \pi^*(\Ric(\theta_{u_X})) - [K_{Y/X}],
    \end{equation}  
    and
    \begin{equation}
        \label{eqn:Ricci_Energy_UpDown}
        {E}_{\Ric(\theta_{u_X})}(\theta; u, w)={E}_{\Ric(\eta_{v_Y})}\big(\eta; \mathbf{L}(u),\mathbf{L}(w) \big)
    \end{equation}
    for any $u,w$ with $\phi$-relative minimal singularities.
    
    \end{itemize}
\end{lemma}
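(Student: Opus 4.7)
\medskip
\noindent\textbf{Proof sketch (plan).} First I would verify that the formula $\mathbf{L}(u) := (u-\phi)\circ\pi + \tilde\phi$ (interpreted on the open dense set $\{\phi\circ\pi\neq -\infty\}$ and extended by upper-semicontinuous regularization) produces an element of $\PSH(Y,\eta)$ whenever $u\in\PSH(X,\theta,\phi)$. A direct computation using $\pi^*\theta_\phi=\eta_{\tilde\phi}+[F]$ gives
\[
\eta+dd^c\mathbf{L}(u)=\pi^*\theta_u-[F],
\]
which is a positive current because $u\preceq\phi$ forces the Lelong numbers of $u\circ\pi$ along the components of $F$ to dominate those of $\tilde\phi+\phi\circ\pi$, so Siu decomposition ensures $\pi^*\theta_u\geq [F]$. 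This gives well-definedness, and solving for $u\circ\pi$ produces an explicit inverse $\mathbf{L}^{-1}(v)\circ\pi = v-\tilde\phi+\phi\circ\pi$, which descends to a $\theta$-psh function on $X$ since $\pi$ is bimeromorphic and the right-hand side is $\pi$-invariant on the complement of the exceptional locus; this proves \emph{(i)}.

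For \emph{(ii)}, observe that $\mathbf{L}(u)=u\circ\pi+(\tilde\phi-\phi\circ\pi)$ differs from $u\circ\pi$ by a function independent of $u$, so the complexified curve $V(x,z):=\mathbf{L}(u_{\log|z|})(x)$ is $\pi^*\theta+dd^c(\tilde\phi-\phi\circ\pi)$-psh on $Y\times A$ precisely when the original complexified curve $U$ is $\pi^*\theta$-psh; combined with the envelope characterization of the geodesic and the fact that $\mathbf{L}$ is an affine bijection on equisingular classes, $\mathbf{L}$ maps psh geodesics to psh geodesics. For \emph{(iii)}, since $[F]$ and the exceptional divisors are pluripolar, $(\pi^*\theta_u-[F])^n_{np}=(\pi^*\theta_u)^n_{np}$ and $(\pi^*\theta_u)^n_{np}=\pi^*(\theta_u^n)$ on $Y\setminus\mathrm{Exc}$, so $\int_Y\eta_{\mathbf{L}(u)}^n=\int_X\theta_u^n$; combined with $m_{\tilde\phi}=\vol(\eta)=m_\phi$ (Lemma \ref{lem:properties N_theta}) this yields the bijection between full-mass and $p$-energy classes.

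For \emph{(iv)}, the identity $\mathbf{L}(u)-\mathbf{L}(w)=(u-w)\circ\pi$ (the $\tilde\phi$ and $\phi\circ\pi$ cancel), together with the previously noted equalities $\eta_{\mathbf{L}(u)}^k\wedge\eta_{\mathbf{L}(w)}^{n-k}=\pi^*(\theta_u^k\wedge\theta_w^{n-k})$ of non-pluripolar products on $Y\setminus\mathrm{Exc}$, reduces $E(\eta;\mathbf{L}(u),\mathbf{L}(w))$ to $E(\theta;u,w)$ term by term via change of variables. For \emph{(v)}, by Lemma \ref{lem:Holo}(i) the relative entropy is a bimeromorphic invariant of non-pluripolar measures, and one checks that $\pi^*(m_\phi^{-1}\theta_u^n)=m_{\tilde\phi}^{-1}\eta_{\mathbf{L}(u)}^n$ and likewise for $w$, giving preservation of entropy.

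The main content is \emph{(vi)}. Working locally, pick nowhere-zero $\Omega_X$, $\Omega_Y$ holomorphic sections of $K_X$, $K_Y$ and write $\pi^*\Omega_X = J\,\Omega_Y$ where $J$ is holomorphic with divisor $K_{Y/X}$. From $\pi^*(\theta_{u_X}^n)=\eta_{v_Y}^n$ on $Y\setminus\mathrm{Exc}$ and writing $\eta_{v_Y}^n=e^{-g}i^{n^2}\Omega_Y\wedge\bar\Omega_Y$ with $g\in L^1_{loc}$, the change-of-variables yields $\pi^*\theta_{u_X}^n=e^{-(\pi^*f)}|J|^2\,i^{n^2}\Omega_Y\wedge\bar\Omega_Y$ for $f:=g+\log|J|^2$ (which is $\pi$-pulled-back from $X$ by uniqueness); this shows $\theta_{u_X}^n=e^{-f}i^{n^2}\Omega_X\wedge\bar\Omega_X$, so $\Ric(\theta_{u_X})$ is well defined, and applying $dd^c$ together with the Poincar\'e--Lelong formula $dd^c\log|J|^2=[K_{Y/X}]$ gives \eqref{Poinc-Lel}. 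Finally, if $\Ric(\theta_{u_X})=T_1-T_2$ is a good decomposition, then $\Ric(\eta_{v_Y})=\pi^*T_1-(\pi^*T_2+[K_{Y/X}])$; plugging this into the definition of $E_{\Ric(\eta_{v_Y})}(\eta;\mathbf{L}(u),\mathbf{L}(w))$, the $[K_{Y/X}]$-contribution vanishes by pluripolarity, and the same non-pluripolar pullback argument used in \emph{(iv)} identifies the remaining integrals with those defining $E_{\Ric(\theta_{u_X})}(\theta;u,w)$. The main obstacle I anticipate is the careful justification that $\mathbf{L}$ lands in $\PSH(Y,\eta)$ (the Lelong-number comparison along $F$) and the local change-of-variables in \emph{(vi)} which must correctly track the discrepancy divisor via Poincar\'e--Lelong.
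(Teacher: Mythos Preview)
Your sketch follows essentially the same route as the paper, with two points worth flagging.

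For well-definedness of $\mathbf{L}$ you invoke Siu decomposition and a Lelong-number comparison along the components of $F$ to deduce $\pi^*\theta_u-[F]\geq 0$. This works (using that $\tilde\phi$, having minimal singularities in a big and nef class, has zero Lelong numbers), but the paper's argument is lighter: since $\pi^*\theta_u-[F]$ is positive on $Y\setminus F$ and $u\leq\phi+C$ forces $(u-\phi)\circ\pi+\tilde\phi$ to be bounded above there, one simply extends an $\eta$-psh function across the pluripolar set $F$. No Siu decomposition is needed.

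In (vi) your direction is reversed at the key step. The statement asks you to show that good Ricci for $\eta_{v_Y}$ implies good Ricci for $\theta_{u_X}$, but after deriving \eqref{Poinc-Lel} you instead start from a decomposition $\Ric(\theta_{u_X})=T_1-T_2$ and deduce one for $\Ric(\eta_{v_Y})$. The fix is immediate once you have \eqref{Poinc-Lel}: if $\Ric(\eta_{v_Y})=S_1-S_2$ with $S_i\geq 0$ closed, then $\pi^*\Ric(\theta_{u_X})=(S_1+[K_{Y/X}])-S_2$, and pushing forward by the proper map $\pi$ gives $\Ric(\theta_{u_X})=\pi_*(S_1+[K_{Y/X}])-\pi_*S_2$ as a difference of positive closed currents on $X$. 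This is what the paper (implicitly) does; your twisted-energy computation via pluripolarity of $[K_{Y/X}]$ is then correct and matches the paper.
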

\noindent In the above statement the energy functionals in (iv) and in \eqref{eqn:Ricci_Energy_UpDown} are defined in \eqref{eq: energy gen down} and \eqref{eq: energy ricci down}.

\begin{proof}
The proof of the first three points proceeds as in the K\"ahler case (\cite[Lemma 4.6]{Trus20b} and \cite[Proposition 3.10]{Tru1}). We give nevertheless some details for the reader's convenience.  \\
We start proving that $\mathbf{L}$ is well defined.
In fact,  set 
$ \tilde{v} := (u-{\phi})\circ \pi + \tilde{\phi},$
then  by \eqref{eqn:Almost_Anal_Sing} 
$$\eta + dd^c((u-{\phi})\circ \pi + \tilde{\phi})  = \eta_{\tilde{\phi}} + \pi^*\theta_u - \pi^*\theta_{\phi} = \pi^*\theta_u - [F].$$ The above means that the restriction of 
$\eta_{\tilde{v}}$ to $Y \setminus F$ is positive, moreover, since $u$ is more singular than $\phi,$ we infer that $ \tilde{v}$ is bounded from above on $Y\setminus F$. Hence there exists a unique  $\eta$-psh function $v$ on $Y$ which equals $\tilde{v}$ almost everywhere with respect to the Lebesgue measure.
Therefore the quasi-psh functions 
$u \circ \pi  + \tilde{\phi}$ and $v + {\phi}\circ \pi  $ coincide almost everywhere on $Y,$ hence they coincide everywhere. 

By construction $\mathbf{L}$ is clearly injective. We now show the surjectivity. For any $v\in \PSH(Y,\eta)$ we claim that $v+{\phi}\circ \pi-{\tilde{\phi}}$ coincides almost everywhere with a
$\pi^* \theta$-psh. 
Indeed by \eqref{eqn:Almost_Anal_Sing}, $\pi^*\theta+dd^c (v+{\phi}\circ \pi-{\tilde{\phi}})=\eta_v+ [F]$.
Thus, since the fibers of $\pi$ are connected and compact, there exists $u\in \PSH(X,\theta)$ such that $u\circ \pi:= v+{\phi}\circ \pi-{\tilde{\phi}}$  almost everywhere on $X$ (see for instance \cite[Proposition 1.2.7.(ii)]{BouThesis}), Using the same arguments as above we see that we must have $\mathbf{L}(u) = v.$ This concludes the proof of $(i)$.\newline
As the non-pluripolar product does not put mass on pluripolar sets, we have
\begin{equation}\label{eq blow up}
    \pi^*(\theta_u^j \wedge \theta_w^{n-j})=(\pi^*\theta_u)^j\wedge (\pi^*\theta_w)^{n-j}=\eta_{\mathbf{L}(u)}^j\wedge \eta_{\mathbf{L}(w)}^{n-j}
\end{equation}
for any $j=0,\dots,n$ and any $u\in\PSH(X,\theta,{\phi})$. Thus
$$\int_X (u-w) \theta_u^j \wedge \theta_w^{n-j} =\int_{Y\setminus F} ((u-{\phi})\circ \pi{+\tilde{\phi}} - (w-{\phi})\circ \pi{-\tilde{\phi}}) \,\eta_{\mathbf{L}(u)}^j\wedge \eta_{\mathbf{L}(w)}^{n-j}= \int_{Y} ( \mathbf{L}(u) -\mathbf{L}(w)) \,\eta_{\mathbf{L}(u)}^j\wedge \eta_{\mathbf{L}(w)}^{n-j}.
$$
Similarly, for any $p\geq 1$ we have
    $$
    \int_X \left\lvert u-\phi \right\rvert^p \theta_u^n= \int_Y \left\lvert \mathbf{L}(u)-\mathbf{L}(\phi) \right\rvert^p \eta_{\mathbf{L}(u)}^n.
    $$
    
    Then (iii) and (iv) follow.% from the definition of the Monge-Ampère energy.\newline

    Let us prove (ii). Let $p_X: X\times A\to X$ and $p_Y:Y\times  A \to Y$ be the projections on the first factors, and consider
    {\small
    $$
    U(x,z):=u_{\log |z|}(x)\in \PSH\big(X\times A,p_X^*\theta\big), \, \,
    V( y,z):=v_{\log |z|}(y)=(u_{\log|z|}(x) -{\phi}(x))\circ \pi{+\tilde{\phi}(y)} \in  \PSH\big(Y\times A,p_Y^*\eta ).
    $$
    }
    Then we find:
    \begin{equation}
        \label{eqn:Geod_Up_Down}
        (\pi\times \text{Id})^*\big(p_X^*\theta+dd^c_{(x,z)} U\big)=p_Y^*\eta+dd^c_{(y,z)} V+p_Y^*[F].
    \end{equation}
   On the other hand, by assumption there exists $M>0$ such that $\max(u_0,u_1) \leq {\phi}+ M$; then by convexity $u_{\log |z|} \leq {\phi} + M$ for all $z \in A.$ It follows that the function $V$ is bounded from above and it is $p_Y^*\eta$-psh on $Y \times A \setminus p_Y^{-1}(F).$
 Then $V$ extends to an $p_Y^*\eta$-psh function on $Y\times A$, i.e. $t\to v_t$ (with $t = \log|z|$) is a psh subgeodesic. Note that the argument above says that $\mathbf{L}$ produces a injective map from psh subgeodesics joining $u_0,u_1$ and psh subgeodesics joining $v_0,v_1$. Such map is actually a bijection as the surjectivity follows reading (\ref{eqn:Geod_Up_Down}) backwards,  using the fact that $F$ is effective and taking the pushforward by $\pi \times \text{Id}.$ Moreover, this correspondence between psh subgeodesics respects the partial order $\leq$. Namely, for any couple of psh subgeodesics $U_1,{U}_2$ joining $u_0,u_1$ such that ${U}_1\leq {U}_2$, the corresponding psh subgeodesics ${V}_1(y,z)= (\pi\times \text{Id})^* (U_1(x,t)-{\phi}(x)){+\tilde{\phi}(y)},{V}_2 (y,z)=(\pi\times \text{Id})^* (U_2(x,t)-{\phi}(x)){+\tilde{\phi}(y)}$ clearly satisfy ${V}_1\leq {V}_2$, and vice-versa. The proof of (ii) follows from the maximality of psh geodesics.

\medskip
As seen above, for any $\theta$-psh function $u\in \PSH(X,\theta,{\phi})$ we have $ \pi^* \theta_u^n%=(\pi^* \theta_u)^n= (\pi^* \theta_\psi +  dd^c \mathbf{L}(u))^n=(\eta +[F] +  dd^c \mathbf{L}(u))^n
= \eta_{\mathbf{L}(u)}^n $.  
Also, we already observed that $m_{\phi}=\vol(\eta)$. Then for any $u,w\in \mathcal{E}(X, \theta, \phi)$ we have $m_u=m_w=m_\phi=\vol(\eta)$. The entropy formula in (v) then follows from the first item of Lemma \ref{lem:Holo}.

\smallskip

   We now prove (vi). Let $p \in Y,$ and  $\Omega$ a nowhere zero holomorphic section of $K_Y$ near $p,$ and $\Omega'$ a nowhere zero local holomorphic section of $K_X$ near $\pi(p).$ Note that $\pi^*\Omega'= h \Omega$ for some holomorphic function vanishing on each $E_j$.
    If $\eta_{v_Y}^n = g \ i^{n^2} \Omega\wedge \bar{\Omega}$ for some function $g\geq 0,$ then on $X\setminus \pi(\cup_j E_j)$
     \begin{equation}\label{Ricci on the pullback}
       \pi_*  \eta_{v_Y}^n = \theta_{u_X}^n = (g \circ \pi^{-1})\,  i^{n^2} \pi_*(\Omega \wedge \overline{\Omega}) =  (g \circ \pi^{-1})\, |h\circ \pi^{-1}|^{-2} \, i^{n^2}  \Omega' \wedge \overline{\Omega'}.
     \end{equation}
We then observe that $\log(g \circ \pi^{-1})\in L^1(\omega^n)$ since 
$$\int_{X } |\log(g \circ \pi^{-1})|\, \omega^n =\int_{Y } |\log g| \,e^f \, \tilde{\omega}^n $$
and the latter integral is finite since $\log g\in L^1(\tilde{\omega}^n)$ and $e^f$ is bounded.

    Therefore $\theta_{u_X}$ has well defined Ricci if so does $\eta_{v_Y}$. Moreover, from the identity in \eqref{Ricci on the pullback} we find that on $X\setminus \pi(\cup_j E_j)$
    $$\Ric(\theta_{u_X})= \pi_*\Ric(\eta_{v_Y}) + \pi_* dd^c \log|h|^2.$$
    Note that, since $dd^c \log|h|^2=0$ on $Y\setminus \cup_j E_j$ and $\pi$ is a biholomorphism there, we obtain that $\pi^*\Ric(\theta_{u_X})-\Ric(\eta_{v_Y})=0$ on $Y\setminus \cup_j E_j $, or equivalently $\pi^*\Ric(\theta_{u_X})-\Ric(\eta_{v_Y})$ is a $(1,1)$-current supported on the singular locus.
    Since $K_Y= \pi^* K_X +  \sum_j a_j E_j, \pi^*\{\Ric(\theta_{u_X}) \}=-\pi^*c_1(K_X)$ and $\{\Ric(\eta_{v_Y}) \}=-c_1(K_Y) $ we obtain \eqref{Poinc-Lel}. Observe that the last claim holds since the information on the cohomology transfers at the level of forms since $E_j$ are numerically independent.
  Indeed by Hironaka \cite{Hir64} any modification $\pi:Y\to X$ can be dominated by a map $p:Z\to X$ given by a sequence of blow-ups along smooth centers, i.e. there exists $\tau:Z\to Y$ such that $p=\pi\circ \tau$. In particular any linear combination of (classes of) $\pi$-exceptional divisors becomes a linear combination of $p$-exceptional divisors after pulling-back through $\tau$, thus the numerical independence follows from \cite[Page 605]{GH}.\\
    %\textcolor{red}{Io lo so dire se l'applicazione e' composizione di blow-up con centri lisci, in generale non so, forse lo sa Antonio, nel caso del blow up sta su Griffith Harris, infatti se un certo divisore di blow up fosse combinazione dei precedenti avresti che la 2-coomologia dell ultimo blow-up proverrebbe tutta da sotto, il che non e' vedi Griffith Harris paragrafo "blow-up of submanifolds pag 605  }
    %\textcolor{red}{ref su classi di cohomologia di $E_j$ indipendenti} \textcolor{blue}{Il modo più semplice è forse vedere che $\pi$ è dato da una sequenza di blow-up e che dopo ogni blow-up il numero di Picard aumenta di 1}\textcolor{red}{non so che c'entra il numero di Picard con gli $E_j$ indipendenti. puoi spiegare e/o dare una referenza?}
%\textcolor{red}{Stefano: effettivamente il secondo gruppo di coomologia del blow-up e' la somma diretta del pull back delle classi che vengono da sotto e della retta generata dalla classe del divisore eccezionale,  quindi facendo una sequenza di blow-up i divisori eccezionali risultano indipendenti. Vedi Griffith Harris pag 601-605.}
   Also, since the non-pluripolar product does not put mass on the pluripolar sets, we deduce that for any $k=0,\cdots ,n-1$
     \begin{equation*}
      \int_X (u-w)\,\theta_u^k\wedge \theta_w^{n-k-1} \wedge \Ric(\theta_{u_X}) =\int_Y (\mathbf{L}(u)-\mathbf{L}(w))\,  \eta_{\mathbf{L}(u)}^k \wedge \eta_{\mathbf{L}(w)}^{n-k-1}\wedge \Ric(\eta_{v_Y})
    \end{equation*}
    for any $u,w\in\mathcal{E}(X,\theta,\phi)$ with $\phi$-relative minimal singularities. This yields \eqref{eqn:Ricci_Energy_UpDown} and concludes the proof.
\end{proof}

\begin{remark}\label{rk:ricci}
It is worth to mention that given two positive real closed $(1,1)$-currents $T$ (on $X$) and $S$ (on $Y$) whose cohomology classes are big and such that $\pi^* T^n= S^n$, the same arguments in the above lemma ensure that $T$ has good Ricci curvature if so does $S$.
\end{remark}

\begin{prop}
\label{cor:Mabuchi_UpDown}
    Let $\varphi\in \cE(X,\theta,\phi)$ such that $\theta_\varphi^n=m_\phi \, \omega^n$, and let $\eta_w$ be such that $\eta_w^n=m_\phi \, \tilde{\omega}^n$. Using the same notations of Lemma \ref{lem:Bijection}, we set $\hat{\varphi}:=\mathbf{L}(\varphi)$ and $v:=\mathbf{L}(u)$ for $u\in \cE(X,\theta,
    \phi)$ with $\phi$-relative minimal singularities. If $\Ent_\theta(u)<+\infty$, then
    \begin{equation}\label{eqn:Mabuchi for gentle analytic singularities}
        \cM_{\theta,\varphi}(u)=\cM_{\eta,w}(v)-\cM_{\eta,w}(\hat{\varphi})+\frac{n}{\vol(\eta)}\sum_{E_j\not\subset E_{nK}(\eta) } a_j \vol(\eta_{|E_j})\Big\{E(\eta; v,\hat{\varphi})- E_{E_j}(\eta|_{E_j}; {v}_{|{E_j}}, \hat{\varphi}_{|E_j})\Big\}
    \end{equation}
    where
    $$
    E_{E_j}(\eta|_{E_j}; {v}_{|{E_j}}, \hat{\varphi}):=
        \frac{1}{n \vol(\eta_{|E_j})}\sum_{k=0}^{n-1}\int_{E_j} (v-\hat{\varphi})\, \eta_{v}^k \wedge \eta_{\hat{\varphi}}^{n-k-1} 
    $$
    is the energy relative to the smooth submanifold $E_j$.\\
    Here $\cM_{\theta,\varphi}$ and $\cM_{\eta,w}$ denote the Mabuchi functionals relative to $\varphi, \hat{\varphi}$ defined in (\ref{eq: K-energy}).
\end{prop}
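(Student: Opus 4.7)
The plan is to compute $\cM_{\eta, w}(v) - \cM_{\eta, w}(\hat\varphi)$ using the cocycle property of the energies, rewrite the $Y$-side terms using the pull-back relation between $\omega^n$ and $\tilde\omega^n$, and then translate back to $X$-quantities via Lemma \ref{lem:Bijection}. The correction appearing in the statement has two sources: the cohomological gap $\bar{S}_\varphi - \bar{S}_w$ and the restriction of the non-pluripolar product to the exceptional divisors.

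First, applying Proposition \ref{prop: cocy model} (and its analogue for the Ricci-weighted energy, obtained by integration by parts against the closed $(n-1,n-1)$-current $\sum_k \eta_v^k \wedge \eta_{\hat\varphi}^{n-k-1}$), one gets
\[
\cM_{\eta, w}(v) - \cM_{\eta, w}(\hat\varphi) = \bar{S}_w E(\eta; v, \hat\varphi) - n E_{\Ric(\eta_w)}(\eta; v, \hat\varphi) + \Ent(v, w) - \Ent(\hat\varphi, w).
\]
Since $\eta_{\hat\varphi}^n = e^f \eta_w^n$ with $f = \sum_j a_j \log|s_j|^2_{h_j}$ (so that $dd^c f = -\Theta + [K_{Y/X}]$ by Poincaré--Lelong), the identity $\Ric(\eta_w) = \Ric(\eta_{\hat\varphi}) + dd^c f$ together with a direct manipulation of the entropy difference and integration by parts (justified because $v - \hat\varphi$ is bounded on $Y$ thanks to Lemma \ref{lem:properties N_theta}) show that the $E_\Theta$- and $E_{[K_{Y/X}]}$-contributions cancel between the Ricci and entropy terms, yielding
\[
\cM_{\eta, w}(v) - \cM_{\eta, w}(\hat\varphi) = \bar{S}_w E(\eta; v, \hat\varphi) - n E_{\Ric(\eta_{\hat\varphi})}(\eta; v, \hat\varphi) + \Ent(v, \hat\varphi).
\]

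Next, Lemma \ref{lem:Bijection}(iv),(v) gives $E(\eta; v, \hat\varphi) = E(\theta; u, \varphi)$ and $\Ent(v, \hat\varphi) = \Ent(u, \varphi)$. For the Ricci term, pulling back the smooth form $\pi^*\Ric(\theta_\varphi) = \Ric(\eta_{\hat\varphi}) + [K_{Y/X}]$ against the non-pluripolar product on $Y$ produces
\[
E_{\Ric(\theta_\varphi)}(\theta; u, \varphi) = E_{\Ric(\eta_{\hat\varphi})}(\eta; v, \hat\varphi) + E_{[K_{Y/X}]}(\eta; v, \hat\varphi),
\]
where the extra term is the Poincaré--Lelong restriction contribution supported on the exceptional divisors. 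Hence
\[
\cM_{\theta, \varphi}(u) - \bigl[\cM_{\eta, w}(v) - \cM_{\eta, w}(\hat\varphi)\bigr] = (\bar{S}_\varphi - \bar{S}_w)\, E(\eta; v, \hat\varphi) - n E_{[K_{Y/X}]}(\eta; v, \hat\varphi).
\]

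Finally, since $\theta_\varphi^n = m_\phi\omega^n$ implies $\Ric(\theta_\varphi) = \Ric(\omega)$, and similarly $\Ric(\eta_w) = \Ric(\tilde\omega)$, one has $\pi^*\Ric(\omega) = \Ric(\tilde\omega) + \Theta$. Using that $\eta_{\hat\varphi}$ and $\eta_w$ both have bounded potentials on the big-nef class $\{\eta\}$ (so their $(n-1)$-th non-pluripolar powers pair cohomologically with smooth closed forms), a direct cohomological computation gives
\[
\bar{S}_\varphi - \bar{S}_w = \frac{n}{\vol(\eta)}\, c_1(K_{Y/X}) \cdot \{\eta\}^{n-1} = \frac{n}{\vol(\eta)} \sum_{E_j \not\subset E_{nK}(\eta)} a_j \vol(\eta|_{E_j}),
\]
using $c_1(E_j) \cdot \{\eta\}^{n-1} = \vol(\eta|_{E_j})$ (which vanishes when $\{\eta\}|_{E_j}$ is not big). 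Combining with the Poincaré--Lelong expansion $n E_{[K_{Y/X}]}(\eta; v, \hat\varphi) = \frac{n}{\vol(\eta)}\sum_j a_j \vol(\eta|_{E_j}) E_{E_j}(\eta|_{E_j}; v|_{E_j}, \hat\varphi|_{E_j})$ produces the claimed identity. The main obstacle is the careful treatment of integration by parts for $f$, whose $-\infty$-poles sit on $\bigcup_j E_j$, against the non-pluripolar product on $Y$, and the precise bookkeeping of the $[K_{Y/X}]$-restriction contributions that enter through both the Ricci pull-back and the entropy calculation.
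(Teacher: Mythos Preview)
Your outline has the right architecture and the final cohomological computation of $\bar S_\varphi-\bar S_w$ is correct, but the central analytic step is asserted rather than proved, and as written it does not go through.

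The problem is the passage you call the ``Poincar\'e--Lelong expansion''
\[
nE_{[K_{Y/X}]}(\eta;v,\hat\varphi)=\frac{n}{\vol(\eta)}\sum_j a_j\vol(\eta_{|E_j})\,E_{E_j}(\eta_{|E_j};v_{|E_j},\hat\varphi_{|E_j}),
\]
together with the earlier integration by parts you invoke to cancel the $E_\Theta$ and $E_{dd^cf}$ contributions. Both manipulations require wedging $[E_j]$ (or $dd^cf$) against the non-pluripolar products $\eta_v^k\wedge\eta_{\hat\varphi}^{n-k-1}$ and identifying the result with the restricted non-pluripolar products on $E_j$. This is \emph{not} a formal consequence of Poincar\'e--Lelong: non-pluripolar products do not charge divisors, so $\langle[E_j]\wedge\cdots\rangle$ is not a priori the slice measure on $E_j$, and Bedford--Taylor integration by parts for $dd^cf$ needs the potentials to be \emph{bounded}, not merely with $v-\hat\varphi$ bounded. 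Here $v,\hat\varphi$ have only minimal singularities in the big and nef class $\{\eta\}$, so they are unbounded along $E_{nK}(\eta)$, and $f=\sum a_j\log|s_j|^2_{h_j}$ has $-\infty$ poles on $\cup_jE_j$.

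The paper fills exactly this gap by a two-layer approximation. First one reduces to $\theta_u^n$ having bounded density. Then one approximates $v,\hat\varphi$ by \emph{bounded} $(\eta+\tfrac{1}{j}\tilde\omega)$-psh functions $v_j,\hat\varphi_j$ solving auxiliary Monge--Amp\`ere equations of the form $(\eta_j+dd^cv_j)^n=e^{\alpha(v_j-v)}\eta_v^n$; on the K\"ahler classes $\{\eta_j\}$ the Bedford--Taylor products agree with the non-pluripolar ones, so $\Theta\wedge\eta_{j,v_j}^k\wedge\eta_{j,\hat\varphi_j}^{n-k-1}=\sum a_l[E_l]\wedge(\cdots)-dd^cf\wedge(\cdots)$ is legitimate and the $dd^cf$-term telescopes to $\int f\,\eta_{j,v_j}^n-\int f\,\eta_{j,\hat\varphi_j}^n$. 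Passing to the limit on each $E_l$ uses Lemma~\ref{lem:Restr of min sing} to control the masses $\int_{E_l}\eta_{j,v_j}^k\wedge\eta_{j,\hat\varphi_j}^{n-k-1}$, distinguishing $E_l\not\subset E_{nK}(\eta)$ (where the slice survives) from $E_l\subset E_{nK}(\eta)$ (where $\vol(\eta_{j|E_l})\to 0$). Finally the bounded-density hypothesis on $\theta_u^n$ is removed by another approximation $u_k$ with $\theta_{u_k}^n=e^{\alpha(u_k-u)}\min(g,k)\omega^n$ and dominated convergence for the entropies. None of this machinery appears in your proposal, and without it the key identity relating $E_\Theta$ (or $E_{[K_{Y/X}]}$) to the restricted energies on the $E_j$ is unjustified.
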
 
Observe that $\cM_{\eta,w}(\hat{\varphi})<+\infty$ since $\eta_{\hat{\varphi}}^n=\pi^*\omega^n= e^f \tilde{\omega}^n$ has $L^p$ density for $p>1$ with respect to $\eta_w^n=\vol(\eta)\, \tilde{\omega}^n$.
\smallskip

It is crucial to stress  that $\int_{E_j}\eta^{n-1}>0$ if and only if $E_j\not\subset E_{nK}(\eta)$ by the main result in \cite{CT15}. In particular for such $j$, $\{\eta_{|E_j}\}$ is a big and nef class on $E_j$ and $\vol(\eta_{|E_j})=\int_{E_j}\eta^{n-1}$. Moreover, in this case any $v\in \PSH(Y,\eta)$ with minimal singularities restricts to a function $v_{|E_j}\in \PSH(E_j,\eta_{|E_j})$ with full Monge-Ampère mass as a consequence of the following Lemma.

\begin{lemma}\label{lem:Restr of min sing}
    Let $\eta$ be a smooth and closed $(1,1)$-form representing a big and nef class and let $v\in \PSH(Y,\eta)$ with minimal singularities. Assume also that $Z\subset Y$ is a positive dimensional
     connected submanifold (we also allow $Z=Y$) such that $Z\not \subset E_{nK}(\eta)$ and let $\Gamma$ be a semipositive smooth and closed $(p,p)$-form, $0\leq p\leq \dim Z$. Then $v_{|Z}\in \PSH(Z,\eta_{|Z})$ satisfies
    $$
    \int_Z \langle \Gamma_{|Z} \wedge (\eta_{|Z}+dd^c v_{|Z})^{\dim Z-p}\rangle=\int_Z \Gamma_{|Z} \wedge \eta_{|Z}^{\dim Z-p},
    $$
     where at the LHS we have the non-pluripolar product on $Z$, while at the RHS we consider the usual wedge product between smooth forms on $Z$.
\end{lemma}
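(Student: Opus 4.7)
The plan is to approximate the nef class $\{\eta\}$ from above by K\"ahler classes $\{\eta+\varepsilon\omega_Y\}$, handle the unboundedness of $v$ via the standard truncation $\max(v,V_{\eta_\varepsilon}-k)$, and pass to the limit $\varepsilon\to 0$. Set $d:=\dim Z$.

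First, $v|_Z$ is a genuine element of $\PSH(Z,\eta|_Z)$: since $Z\not\subset E_{nK}(\eta)$, the intersection $Z\cap \Amp(\eta)$ is a non-empty open subset of $Z$ on which $v$ is locally bounded, so $v|_Z\not\equiv -\infty$. By \cite{CT15}, $\{\eta|_Z\}$ is big and nef on $Z$, and the right-hand side is a genuine cohomological invariant.

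Fix a K\"ahler form $\omega_Y$ on $Y$ and set $\eta_\varepsilon:=\eta+\varepsilon\omega_Y$; for $\varepsilon>0$ the class $\{\eta_\varepsilon\}$ is K\"ahler and $V_{\eta_\varepsilon}$ is bounded on $Y$, so the truncations $u_{\varepsilon,k}:=\max(v,V_{\eta_\varepsilon}-k)$ are bounded $\eta_\varepsilon$-psh functions decreasing to $v$ as $k\to\infty$. Since $\{\eta_\varepsilon|_Z\}$ is K\"ahler on $Z$, Bedford-Taylor theory and Stokes' theorem give $\int_Z \Gamma|_Z \wedge (\eta_\varepsilon|_Z + dd^c u_{\varepsilon,k}|_Z)^{d-p} = \int_Z \Gamma|_Z \wedge \eta_\varepsilon|_Z^{d-p}$. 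By plurifine locality the left-hand side splits into an integral over $\{v|_Z>V_{\eta_\varepsilon}|_Z-k\}$ increasing to $\int_Z \Gamma|_Z \wedge (\eta_\varepsilon|_Z + dd^c v|_Z)^{d-p}$, and an integral over $\{v|_Z\leq V_{\eta_\varepsilon}|_Z-k\}$ which vanishes as $k\to\infty$ because the Bedford-Taylor measure of the bounded potential $V_{\eta_\varepsilon}|_Z$ does not charge the pluripolar set $\{v|_Z=-\infty\}$ to which the domain shrinks. Hence for every $\varepsilon>0$
\[
\int_Z \Gamma|_Z \wedge (\eta_\varepsilon|_Z + dd^c v|_Z)^{d-p} = \int_Z \Gamma|_Z \wedge \eta_\varepsilon|_Z^{d-p},
\]
placing $v|_Z$ in $\mathcal{E}(Z,\eta_\varepsilon|_Z)$.

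Letting $\varepsilon\to 0$, the right-hand side converges by smoothness to $\int_Z \Gamma|_Z \wedge \eta|_Z^{d-p}$. For the left-hand side, Theorem \ref{thm:weak convergence} yields the $\liminf$-semicontinuity; combined with the fact that $v|_Z$ has full non-pluripolar mass at every approximating K\"ahler level (from the identity above) and an adaptation of Remark \ref{rk:conv_incr} to smoothly converging classes, the mass matching forces true convergence to $\int_Z \Gamma|_Z \wedge (\eta|_Z + dd^c v|_Z)^{d-p}$, yielding the identity. The main technical obstacle is precisely this mass transfer across varying classes: one must check that $v|_Z$ remains in the full-mass class of the limiting big and nef class $\{\eta|_Z\}$, which uses both the cohomological mass equality at each level $\varepsilon>0$ and the bigness-nefness of the restricted class on $Z$ guaranteed by \cite{CT15}.
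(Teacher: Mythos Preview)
Your argument through the identity $\int_Z \Gamma|_Z\wedge\langle(\eta_\varepsilon|_Z+dd^c v|_Z)^{d-p}\rangle=\int_Z \Gamma|_Z\wedge\eta_\varepsilon|_Z^{d-p}$ for each fixed $\varepsilon>0$ is fine. The gap is in the passage $\varepsilon\to 0$. From Theorem~\ref{thm:weak convergence} you only obtain
\[
\int_Z \Gamma|_Z\wedge\eta|_Z^{d-p}=\lim_{\varepsilon\to 0}\int_Z \Gamma|_Z\wedge\langle(\eta_\varepsilon|_Z+dd^c v|_Z)^{d-p}\rangle\geq \int_Z \Gamma|_Z\wedge\langle(\eta|_Z+dd^c v|_Z)^{d-p}\rangle,
\]
which is the upper bound on the non-pluripolar mass. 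Your appeal to ``an adaptation of Remark~\ref{rk:conv_incr}'' for the reverse inequality does not work: that remark needs $\limsup_k\int \theta^1_{k,u_1^k}\wedge\cdots\leq \int \theta^1_{u_1}\wedge\cdots$, but here the potential is fixed while the forms satisfy $\eta_\varepsilon\geq\eta$, so the monotonicity goes the wrong way. You correctly flag this as ``the main technical obstacle'' but do not actually resolve it; knowing that $v|_Z$ has full mass for every $\eta_\varepsilon|_Z$ does \emph{not} by itself force full mass for $\eta|_Z$.

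The paper supplies precisely the missing lower bound by a different mechanism. It takes a K\"ahler current $\eta+dd^c\psi\geq \varepsilon\tilde\omega$ with analytic singularities along $E_{nK}(\eta)$ and smooth K\"ahler potentials $\varphi_\delta$ for $\eta+\delta\tilde\omega$, and forms the convex combination $u_\delta=\frac{\varepsilon}{\delta+\varepsilon}\varphi_\delta+\frac{\delta}{\delta+\varepsilon}\psi$, which is $\eta$-psh. Since $v$ has minimal singularities in $\{\eta\}$ one has $v|_Z\geq u_\delta|_Z-C$, and then the Witt Nystr\"om/DDL monotonicity (applied in the correct direction, inside the fixed class $\{\eta|_Z\}$) yields $\int_Z\langle\Gamma\wedge\eta_v^{d-p}\rangle\geq(\frac{\varepsilon}{\delta+\varepsilon})^{d-p}\int_Z\Gamma\wedge\eta^{d-p}$; letting $\delta\to 0$ finishes. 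Alternatively, your route can be rescued cleanly by multilinearity of the non-pluripolar product: both sides of your $\varepsilon$-identity are polynomials in $\varepsilon$, so equating the constant terms gives the result directly, without invoking Theorem~\ref{thm:weak convergence} or Remark~\ref{rk:conv_incr} at all.
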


\begin{proof}
To lighten notations, we merely write $\langle \Gamma \wedge (\eta+dd^c v)^{\dim Z-p}\rangle $ instead of $\langle \Gamma_{|Z} \wedge (\eta_{|Z}+dd^c v_{|Z})^{\dim Z-p}\rangle$. More generally, we drop the notation for the restriction over $Z$.\\
    By \cite[Theorem 3.17]{Bou04} there exists a function $\psi\in \PSH(Y,\eta)$ with analytic singularities along the non-Kähler locus $E_{nK}(\eta)$ such that $T:=\eta+dd^c\psi\geq \varepsilon \tilde{\omega} $ where $\tilde{\omega}$ is a fixed Kähler form on $Y$. 
    By nefness of $\{\eta\}$, for any $\delta>0$ there exists also a K\"ahler form $\eta+\delta\tilde{\omega}+dd^c \varphi_\delta$. Then we set $u_\delta:=\frac{\varepsilon}{\delta+\varepsilon} \varphi_\delta+ \frac{\delta}{\delta+\varepsilon}\psi$. Such a function is $\eta$-psh as
    $$
    \eta+dd^c u_\delta= \frac{\varepsilon}{\delta+\varepsilon}(\eta+\delta\tilde{\omega}+dd^c \varphi_\delta)+\frac{\delta}{\delta+\varepsilon}(\eta-\varepsilon\tilde{\omega}+dd^c \psi)\geq 0.
    $$
    Since by assumption $Z\not\subset E_{nK}(\eta)$, the function $u_{\delta|Z}$ is a well-defined $\eta_{|Z}$-psh function.
    Thus as $v\in \PSH(Y,\eta)$ has minimal singularities we have $v_{|Z}\geq u_{\delta|Z}-C$ and from \cite[Theorem 1.1]{DDL2} it follows that
    \begin{eqnarray*}
        \int_Z \langle\Gamma \wedge(\eta+dd^c v)^{\dim Z-p}\rangle &\geq & \int_Z\langle \Gamma \wedge(\eta+dd^c u_{\delta})^{\dim Z-p}\rangle \\
        &\geq & \left(\frac{\varepsilon}{\delta+\varepsilon}\right)^{\dim Z-p}\int_Z\Gamma \wedge (\eta+\delta\tilde{\omega}+dd^c \varphi_{\delta})^{\dim Z-p}\\
        &\geq &  \left(\frac{\varepsilon}{\delta+\varepsilon}\right)^{\dim Z-p}\int_Z \Gamma \wedge \eta^{\dim Z-p}
    \end{eqnarray*}
    where the last equality follows from Stokes' theorem and the positivity of $\tilde{\omega}$. Letting $\delta\to 0$, we find that 
    $$\int_Z\langle\Gamma\wedge (\eta+dd^c v)^{\dim Z-p}\rangle\geq \int_Z \Gamma \wedge\eta^{\dim Z-p}.$$ For the reverse inequality we observe that  for any $\varepsilon > 0,$ 
    $\varphi_\delta$ is with minimal singularities in 
    $\eta  + \varepsilon \tilde{\omega}$ so, again from \cite[Theorem 1.1]{DDL2} and Stokes' Theorem, it follows that
    \begin{eqnarray*}
    \int_Z \langle\Gamma\wedge (\eta+dd^c v)^{\dim Z-p}\rangle &\leq & \int_Z \langle \Gamma \wedge (\eta+\varepsilon\tilde{\omega}+dd^c v)^{\dim Z-p} \rangle \\
    &\leq  & \int_Z \Gamma \wedge (\eta+\varepsilon\tilde{\omega}+dd^c \varphi_\delta)^{\dim Z-p} \\
      &=  & \int_Z \Gamma \wedge (\eta+\varepsilon\tilde{\omega})^{\dim Z-p} \\
    \end{eqnarray*}
Letting $\varepsilon\to 0$ then concludes the proof.

   % \textcolor{red}{Alternative proof by Ele: do we really need that $V$ is not contained in the non-K\"ahler locus?}
% \textcolor{blue}{I think so. How can you then define the integration along V of the non-pluripolar product? Namely, $\langle \Gamma \wedge (\eta_v)^{\dim V-p} \rangle\wedge [V]$ is not defined since $v$ may be a priori constantly equal to $-\infty$. Note that the case when $V\subset E_{nK}$ was analysed in \cite{CT15} where they proved that $\int_V \eta^{\dim V-p}=0$.}
 % \textcolor{red}{indeed, if $v\equiv -\infty$ then $\int_V \langle \Gamma \wedge (\eta+dd^c v)^{\dim V-p}\rangle =0$ by definition of the non-pluripolar product. on the other side,$\int_V \eta^{\dim V-p}=0$ By Collins-Tosatti. Hence equality alwasy! }
%   \textcolor{red}{Stefano: Ripensandoci sono d'accordo con Antonio, l'integrale su $V$ del prodotto non pluripolare e' di un prodotto non pluripolare fatto su $V,$ quindi se $v$ e' $- \infty$ su $V$ la stessa scritture $dd^c v$ non ha senso, si puo' dire che per convenzione poniamo quell'integrale uguale a zero, ma non mi pare granche' significativo} 
 %   By definition of the non-pluripolar product \cite[page 219]{BEGZ10} we have
 % \cite[Theorem 1.1]{DDL2} it follows that
 %   \begin{eqnarray*}  
   % \int_V \langle \Gamma \wedge (\eta+dd^c v)^{\dim V-p}\rangle &= & \int_V  \Gamma %\wedge \langle (\eta+dd^c v)^{\dim V-p}\rangle\\
 %   &= & \{\Gamma\} \cdot \{\langle \eta_v \rangle \}^{dim V-p}\cdot \{V\}\\
 %   &= &\{\Gamma\} \cdot \{\eta\}^{dim V-p} \cdot \{V\} \\
 %   &=& \int_V \Gamma \wedge \eta^{\dim V-p}.
%   \end{eqnarray*}
\end{proof}

We are now ready to prove Proposition \ref{cor:Mabuchi_UpDown}.
\begin{proof}[Proof of Proposition \ref{cor:Mabuchi_UpDown}]
We first assume that $\theta_u^n$ has bounded density, i.e. $\theta_u^n=g\omega^n$ with $g$ bounded.

\noindent \textbf{Step 1: A first formula connecting $\cM_{\theta,\varphi}(u)$ and $\cM_{\eta,w}(v)$.}\\
Lemma \ref{lem:Bijection} gives $$ E(\theta; u,\varphi) =E(\eta; v, \hat{\varphi}).$$
    Moreover, as $\pi^*\Ric(\omega)=\Ric(\tilde{\omega})+\Theta$ by \eqref{eqn:pullback of Ricci}, $\Ric(\omega)=\Ric(\theta_\varphi), \Ric(\tilde{\omega})=\Ric(\eta_w)$ we similarly have
    \begin{eqnarray*}
    E_{\Ric(\theta_\varphi)}(\theta; u,\varphi)&=& E_{\pi^*\Ric(\omega)}(\eta; v,\hat{\varphi})\\
     &=& E_{\Ric(\tilde{\omega})}(\eta; v,\hat{\varphi})+E_{\Theta}(\eta; v,\hat{\varphi})\\
    &=& E_{\Ric(\eta_w)}(\eta; v,\hat{\varphi})+E_{\Theta}(\eta; v,\hat{\varphi}).
    \end{eqnarray*}
   Furthermore, since $\eta_{\hat{\varphi}}^n=m_\phi e^f\tilde{\omega}^n$ for $f=\sum_{j=1}^m a_j \log \lvert s_j\rvert_{h_j}^2$, by Lemma \ref{lem:Bijection} and the definition of the entropy we have
    $$
    \Ent_\theta(u,\varphi)=\Ent_\eta(v,\hat{\varphi})=\Ent_\eta(v)-\frac{1}{\vol(\eta)}\int_Y f\, \eta_v^n.
    $$
   Combining all the above and using the cocycle property for the energies (e.g. $E(\eta; v,\hat{\varphi})=E(\eta; v,w)-E(\eta;\hat{\varphi},w)$), we obtain
    {\small
    \begin{multline}
        \label{eqn:Almost Formula Mabuchi pullback}
        \cM_{\theta,\varphi}(u)= \bar{S}_{{\varphi}}E(\eta; v,\hat{\varphi})-n E_{\Ric(\eta_w)}(\eta; v,\hat{\varphi})-nE_{\Theta}(\eta; v,\hat{\varphi})+\Ent_\eta(v)-\frac{1}{\vol(\eta)}\int_Y f\, \eta_v^n\\
        =\cM_{\eta,w}(v)-\cM_{\eta,w}(\hat{\varphi})+(\bar{S}_\varphi-\bar{S}_w)E(\eta;v,\hat{\varphi})-nE_\Theta(\eta;v,\hat{\varphi})+\Ent_\eta(\hat{\varphi})-\frac{1}{\vol(\eta)}\int_Y f\, \eta_v^n.
    \end{multline}
    } 
    
  \smallskip

\noindent \textbf{Step 2: The twisted energy $E_\Theta(\eta;v,\hat{\varphi})$.}\\
   Using the same ideas in the proof of \cite[Theorem 4.2]{DNL21}, we  approximate $v, \hat{\varphi}$ by decreasing sequences $v_j,\hat{\varphi}_j$ of bounded $(\eta+\varepsilon_j \tilde{\omega})$-psh functions such that the respective entropy converges and $v_j-\hat{\varphi}_j$ is uniformly bounded.\\
    Observe that $\eta_{\hat{\varphi}}^n=m_\phi e^f\tilde{\omega}^n$ and $\eta_v^n=\pi^* \theta_u^n= (g\circ \pi) \pi^*\omega^n= g\circ \pi e^f \tilde{\omega}^n $. In particular, the measures $\eta_{\hat{\varphi}}^n, \eta_v^n$ both have bounded density. \\
    We set $\eta_j:=\eta+\frac{1}{j}\tilde{\omega}$, and let $\alpha>0$ be small enough so that $\sup_{w\in \PSH(Y,\eta)}\int_Y e^{-2\alpha(w-\sup_Y w)}\tilde{\omega}^n<\infty$. We then define $v_j\in\cE(Y,\eta_j), \hat{\varphi}_j\in \cE(Y,\eta_j)$ as the unique bounded solutions of
    \begin{equation}
        \label{eqn:Trick}
        (\eta_j+dd^c v_j)^n= e^{\alpha(v_j-v)} (\eta+dd^c v)^n= e^{\alpha(v_j-v)} g\circ \pi e^f \tilde{\omega}^n  
    \end{equation}
    $$(\eta_j+dd^c \hat{\varphi}_j)^n=e^{\alpha(\hat{\varphi}_j- \hat{\varphi})}(\eta+dd^c \hat{\varphi})^n=e^{\alpha(\hat{\varphi}_j - \hat{\varphi}) }m_\phi e^f\tilde{\omega}^n.$$
   
     Note that, since $\{\eta_j\}$ is a K\"{a}hler class, the existence of bounded  $v_j,\hat{\varphi}_j$ follows from \cite{BEGZ10} since $ e^{-\alpha(v-\sup_X v)}  g\circ \pi e^f $ and $e^{-\alpha( \hat{\varphi}-\sup_X \hat{\varphi}) } e^f$ are in $L^2$.

    From the comparison principle (see for instance \cite[Lemma 2.5]{DDL1}) we obtain that $v_j, \hat{\varphi}_j$ are decreasing sequences converging respectively to $v$ and $\hat{\varphi}$. Moreover \cite[Theorem 1.9]{DNGG20} implies that $v_j$ and $\hat{\varphi}_j$ are uniformly bounded, from which we deduce that $v_j-\hat{\varphi}_j$ is uniformly bounded.
    
As the functions $v_j, \hat{\varphi}_j$ are bounded, and $v,\hat{\varphi}$ have minimal singularities, it follows from Lemma \ref{lem:Restr of min sing} that for any smooth closed semipositive $(1,1)$-form $\Gamma$ and for any $k=0,\dots,n-1$ we have
    \begin{equation}
        \label{eqn:Masses}
        \int_Y \langle \Gamma \wedge \eta_{j,v_j}^k\wedge \eta_{j,\hat{\varphi}_j}^{n-k-1}\rangle=\int_Y \Gamma \wedge \eta_j^{n-1} \longrightarrow \int_Y \Gamma \wedge \eta^{n-1}=\int_Y\langle \Gamma \wedge \eta_v^k\wedge \eta_{\hat{\varphi}}^{n-k-1} \rangle,
    \end{equation}
   as  ${j\to +\infty}$. In the above, we emphasized by $\langle\cdots \rangle$ the use of the non-pluripolar product vs the usual wedge product between smooth forms.\\
    Thus, writing $\Theta$ as difference of two semipositive closed smooth $(1,1)$-forms it follows from Theorem \ref{thm:weak convergence} that for any $k=0,\dots,n-1$,
    \begin{equation}\label{eqn:Useful eqn 1}
    \int_Y (v_j-\hat{\varphi}_j) \Theta\wedge \eta_{j,v_j}^k\wedge \eta_{j,\hat{\varphi}_j}^{n-k-1} \longrightarrow \int_Y (v-\hat{\varphi}) \Theta\wedge \eta_v^k\wedge \eta_{\hat{\varphi}}^{n-k-1}   
    \end{equation}
    as $j\to +\infty$. \\
    It also follows from Lemma \ref{lem:Restr of min sing} that if $E_l\not\subset E_{nK}(\eta)$, for any $k=0,\dots,n-1$ we have
    $$
    \int_{E_l}\langle\eta_{j,v_j}^k\wedge \eta_{j,\hat{\varphi}_j}^{n-k-1}\rangle=\int_{E_l}\eta_j^{n-1}\longrightarrow\int_{E_l}\eta^{n-1}=\int_{E_l}\langle \eta_v^k\wedge \eta_{\hat{\varphi}}^{n-k-1} \rangle.
    $$
 Therefore, Theorem \ref{thm:weak convergence} ensures that for any $E_l\not \subset E_{nK}(\eta)$ and any $k=0,\dots,n-1$
    \begin{equation}\label{eqn:Useful eqn 2}
        \int_{E_l}(v_j-\hat{\varphi}_j)\eta_{j,v_j}^k\wedge \eta_{j,\hat{\varphi}_j}^{n-k-1}\longrightarrow\int_{E_l}(v-\hat{\varphi})\eta_v^k\wedge \eta_{\hat{\varphi}}^{n-k-1}
    \end{equation}
    as $j\to +\infty$. If instead $E_l\subset E_{nK}(\eta)$ then
    \begin{equation}\label{eqn:Useful eqn 3}
        \left|\int_{E_l}(v_j-\hat{\varphi}_j)\eta_{j,v_j}^k\wedge \eta_{j,\hat{\varphi}_j}^{n-k-1}\right|\leq \lVert v_j-\hat{\varphi}_j\rVert_\infty \vol(\eta_{j|E_l})\longrightarrow 0
    \end{equation}
    as $j\to +\infty$ since $ v_j-\hat{\varphi}_j$ is uniformly bounded while $\vol(\eta_{j|E_l})\to 0$. The last claim is a consequence of the fact that $E_{nK}(\eta)=\bigcup\left\{V\subset X \, : \, \int_V\eta^{\dim V}=0\right\}$ by \cite{CT15}.

\smallskip

    Next, we recall that for any $j$, as $v_j,\hat{\varphi}_j$ are bounded, the non-pluripolar product coincides with the Bedford-Taylor wedge product. So, since $\Theta+dd^cf=[K_{Y/X}]=\sum_{l=1}^m a_l[E_l]$, we obtain
    \begin{equation}\label{eqn:Useful eqn 4}
        \langle\Theta\wedge \eta_{j,v_j}^k\wedge \eta_{j,\hat{\varphi}_j}^{n-k-1}\rangle=\Theta\wedge \eta_{j,v_j}^k\wedge \eta_{j,\hat{\varphi}_j}^{n-k-1}= \sum_{l=1}^m a_l [E_l]\wedge \eta_{j,v_j}^k\wedge \eta_{j,\hat{\varphi}_j}^{n-k-1} - dd^c f \wedge \eta_{j,v_j}^k\wedge \eta_{j,\hat{\varphi}_j}^{n-k-1},
    \end{equation}
    where the products $[E_l]\wedge \eta_{j,v_j}^k\wedge \eta_{j,\hat{\varphi}_j}^{n-k-1}, dd^c f \wedge \eta_{j,v_j}^k\wedge \eta_{j,\hat{\varphi}_j}^{n-k-1} $ make sense thanks to \cite[Section 2]{Dem85}.
    Observe also that by definition we have
    $$
    dd^c f \wedge \eta_{j,v_j}^k\wedge \eta_{j,\hat{\varphi}_j}^{n-k-1}:=dd^c\left(f\,\eta_{j,v_j}^k\wedge \eta_{j,\hat{\varphi}_j}^{n-k-1}\right).
    $$
    The above is a $(n,n)$-current which acts on smooth functions. However \cite[Theorem 2.2]{Dem85} implies that such action can be extended to {bounded} quasi-psh functions. It then follows (basically) by definition that 
    \begin{eqnarray}\label{eqn:Useful eqn 5}
    \nonumber    \int_X (v_j-\hat{\varphi}_j)dd^c f \wedge \eta_{j,v_j}^k\wedge \eta_{j,\hat{\varphi}_j}^{n-k-1}&=& \int_X fdd^c (v_j-\hat{\varphi}_j) \wedge \eta_{j,v_j}^k\wedge \eta_{j,\hat{\varphi}_j}^{n-k-1}\\
        &=&\int_X f \eta_{j,v_j}^{k+1}\wedge \eta_{j,\hat{\varphi}_j}^{n-k-1} -\int_X f \eta_{j,v_j}^{k}\wedge \eta_{j,\hat{\varphi}_j}^{n-k}.
    \end{eqnarray}

    Finally, by \eqref{eqn:Trick} and Monotone Convergence Theorem
    \begin{equation}\label{eqn:Useful eqn 6}
    \int_Y f\eta_{j,v_j}^n= \int_Y f e^{\alpha(v_j-v)} \eta_v^n\longrightarrow \int_Y f\eta_v^n
    \end{equation}
    and
   $$ \int_Y f \eta_{j,\hat{\varphi}_j}^n = \int_Y f e^{\alpha(\hat{\varphi}_j-\hat{\varphi})} \eta_{\hat{\varphi}}^n \longrightarrow \int_Y f \eta_{\hat{\varphi}}^n. $$
 Note also that the integrals at the RHS are finite since $\eta_v^n,\eta_{\hat{\varphi}}$ have bounded density and $f\in L^1(\tilde{\omega}^n).$\\
 Combining all the above we have
    {\small
    \begin{eqnarray*}
        n\vol(\eta)E_{\Theta}(\eta; v,\hat{\varphi})&\overset{\eqref{eqn:Useful eqn 1}}{=} &\lim_{j\to +\infty}\sum_{k=0}^{n-1}\int_X(v_j-\hat{\varphi}_j)\langle\Theta\wedge \eta_{j,v_j}^k\wedge \eta_{j,\hat{\varphi}_j}^{n-k-1}\rangle\\
        &\overset{\eqref{eqn:Useful eqn 4}}{=} &\lim_{j\to+\infty}\left(\sum_{l=1}^m a_l\sum_{k=0}^{n-1} \int_{E_l} (v_j-\hat{\varphi}_j) \eta_{j,v_j}^k \wedge \eta_{j,\hat{\varphi}_j}^{n-k-1}-\sum_{k=0}^{n-1} \int_Y (v_j-\hat{\varphi}_j) dd^c f\wedge \eta_{j,v_j}^k \wedge \eta_{j,\hat{\varphi}_j}^{n-k-1}\right)\\
        &\overset{\eqref{eqn:Useful eqn 5}}{=} & \lim_{j\to +\infty}\left(\sum_{l=1}^m a_l\sum_{k=0}^{n-1} \int_{E_l} (v_j-\hat{\varphi}_j) \eta_{j,v_j}^k \wedge \eta_{j,\hat{\varphi}_j}^{n-k-1}-\int_Y f \,\eta_{j,v_j}^n+\int_Y f \,\eta_{j,\hat{\varphi}_j}^n\right)\\
        &{=} &\sum_{E_j\not\subset E_{nK}(\eta)}a_l\sum_{k=0}^{n-1} \int_{E_l} (v-\hat{\varphi}) \eta_v^k \wedge \eta_{\hat{\varphi}}^{n-k-1}-\int_Y f \,\eta_v^n+\int_Y f \,\eta_{\hat{\varphi}}^n\\
        &= &\sum_{E_j\not\subset E_{nK}(\eta)}n a_l \vol(\eta_{|E_l}) E_{|E_l}(\eta_{|E_l}; v_{|E_l},\hat{\varphi}_{|E_l})-\int_Y f\, \eta_v^n+\vol(\eta)\Ent_\eta(\hat{\varphi}).
    \end{eqnarray*}
}
where in the fourth equality we used \eqref{eqn:Useful eqn 2}, \eqref{eqn:Useful eqn 3} and \eqref{eqn:Useful eqn 6}; in the last equality we also used that $\Ent_\eta(\hat{\varphi})=\frac{1}{\vol(\eta)}\int_Y f\, \eta_{\hat{\varphi}}^n$.\\
Thus, \eqref{eqn:Almost Formula Mabuchi pullback} writes as
  {\small
    \begin{equation}\label{eqn:Almost the end}
        \cM_{\theta,\varphi}(u)=\cM_{\eta,w}(v)-\cM_{\eta,w}(\hat{\varphi})+\big(\bar{S}_\varphi-\bar{S}_w\big)E(\eta;v,\hat{\varphi})-\frac{n}{\vol(\eta)}\sum_{j=1}^m a_j \vol(\eta_{|E_j}) E_{|E_j}(\eta_{|E_j}; v_{|E_j},\hat{\varphi}_{|E_j}).
    \end{equation}}
    
\noindent \textbf{Step 3: Computing $\bar{S}_{\varphi}-\bar{S}_w$}.\\
    Since $\pi^* \Ric(\omega)=\Ric(\tilde{\omega})+\Theta$, by linearity, the proof of Lemma \ref{lem:Bijection} and again Lemma \ref{lem:Restr of min sing} we have
    \begin{align*}
        \bar{S}_\varphi&=\frac{n}{m_\phi}\int_X \langle\Ric(\omega)\wedge \theta_\varphi^{n-1}\rangle\\
        &=\frac{n}{\vol(\eta)}\int_Y \langle\big(\Ric(\tilde{\omega})+\Theta\big)\wedge \eta_{\hat{\varphi}}^{n-1}\rangle\\
        &=\frac{n}{\vol(\eta)}\int_Y\langle \Ric(\tilde{\omega})\wedge \eta_w^{n-1}\rangle+\frac{n}{\vol(\eta)}\int_Y \Theta\wedge \eta^{n-1}\\
        &= \bar{S}_w+\frac{n}{\vol(\eta)} \{\Theta\}\cdot \{\eta^{n-1}\}\\
        &= \bar{S}_w+ \frac{n}{\vol(\eta)} \sum_{E_l \not\subset E_{nK}(\eta)}^m a_l \vol(\eta_{|E_l})
    \end{align*}
    where in the above we used several times that $\hat{\varphi}$ and $w$ have minimal singularities and $\{\Theta\}=\sum_{l=1}^m a_l\{E_l\}$. \\
    Plugging this into \eqref{eqn:Almost the end} concludes the proof for $u\in \cE(X,\theta,\phi)$ with $\phi$-relative minimal singularities such that $\theta_u^n$ has bounded density with respect to $\omega^n$.

\smallskip
    
\noindent \textbf{Step 4: General case}. \\
Let $u\in \cE(X,\theta,\phi)$ with $\phi$-relative minimal singularities such that $\Ent_\theta(u)<+\infty$ and let $g\geq 0$ such that $\theta_u^n=g\omega^n$. We fix $\alpha>0$ small enough so that $e^{-\alpha u}\in L^2(\omega^n)$ and we define $u_k\in \cE(X,\theta,\phi)$ to be the solution of
$$
\theta_{u_k}^n= e^{\alpha(u_k-u)}\min(g,k) \omega^n
$$
(see \cite[Theorem 1.4]{DDL2} for the existence of such potentials). Observe that $u_k$ has $\phi$-relative minimal potential by \cite[Theorem A]{DDL4}.

Set $v_k:=\mathbf{L}(u_k)$. Our goal is to prove that for any $E_j\not \subset E_{nK}(\eta)$ we have
\begin{gather*}
\cM_{\theta,\varphi}(u_k)\to \cM_{\theta,\varphi}(u),\\
\cM_{\eta,w}(v_k)\to \cM_{\eta,w}(v),\\
E(\eta; v_k,\hat{\varphi})\to E(\eta; v,\hat{\varphi}),\\
E_{E_j}(\eta|_{E_j}; {v_k}_{|{E_j}}, \hat{\varphi}_{|E_j})\to E_{E_j}(\eta|_{E_j}; {v}_{|{E_j}}, \hat{\varphi}_{|E_j})
\end{gather*}
 as $k\to +\infty$. By Lemma \ref{lem:Bijection} we have
\begin{equation}\label{eq v}
\eta_{v_k}^n=e^{\alpha(v_k-v)}\min(g\circ \pi,k)e^f\tilde{\omega}^n
\end{equation}
and $v_k\in \cE(Y,\eta)$. Then it follows from the comparison principle in \cite[Lemma 2.5]{DDL1} that $v_k \searrow \tilde{v}$ and $\tilde{v} \geq v$. We claim that $\tilde{v}= v$. This is indeed the case since from \eqref{eq v} we find that $$\eta_{\tilde{v}}^n=e^{\alpha(\tilde{v}-v)} g\circ \pi e^f\tilde{\omega}^n= e^{\alpha(\tilde{v}-v)} \eta_v^n.$$
The comparison principle once again gives $\tilde{v}=v$.
By construction, the previous fact is equivalent to $u_k\searrow u$. As $u$ has $\phi$-relative minimal singularities, we deduce that any difference $u_k-\tilde{u}$ for $\tilde{u}\in \cE(X,\theta,\phi)$ with $\phi$-relative minimal singularities is uniformly bounded in $k$, and the analogous holds for $v_k-\tilde{v}$, $\tilde{v}=\mathbf{L}(u)$. In particular, since $\varphi$ has $\phi$-relative minimal singularities and since $\hat{\varphi}, w$ have minimal singularities, combining Lemma \ref{lem:Restr of min sing} with Theorem \ref{thm:weak convergence} we infer the following convergences of energies:
\begin{gather*}
    E(\theta;u_k,\varphi)\longrightarrow E(\theta;u,\varphi),\quad   E(\eta;v_k,\hat{\varphi})\longrightarrow E(\eta;v,\hat{\varphi}),\quad  E(\eta;v_k,w)\longrightarrow E(\eta;v,w), \\
    E_{\Ric(\theta_\varphi)}(\theta;u_k,\varphi)\longrightarrow E_{\Ric(\theta_\varphi)}(\theta;u,\varphi), \quad 
    E_{\Ric(\eta_w)}(\eta;v_k,w)\longrightarrow E_{\Ric(\eta_w)}(\eta;v,w)\\
    E_{E_j}(\eta_{|E_j};v_{k|E_j},\hat{\varphi}_{|E_j})\longrightarrow E_{E_j}(\eta_{|E_j};v_{|E_j},\hat{\varphi}_{|E_j})
\end{gather*}
for any $E_j\not \subset E_{nK}(\eta)$. Note that we also used that $\Ric(\theta_\varphi)=\Ric(\omega), \Ric(\eta_w)=\Ric(\tilde{\omega})$ are smooth. It remains to prove that
$$
\Ent_\theta(u_k)\to \Ent_\theta(u), \quad \quad \Ent_\eta(v_k)\to \Ent_\eta(v).
$$
In order to do so, we observe that if $0\leq h\leq g$, an elementary calculation gives
\begin{eqnarray*}
    \lvert h\log h\rvert&\leq &\mathbf{1}_{\{h<1\}}(-h\log h) +\mathbf{1}_{\{h\geq 1\}} h\log h\\
    &\leq & e^{-1}+\mathbf{1}_{\{g\geq 1\}}g\log g\\
    &\leq  & e^{-1}+  g\log g +\mathbf{1}_{\{g<1\}}(-g\log g)\\
    &\leq & 2e^{-1} +g\log g
\end{eqnarray*}
as the function $\R_{\geq 0}\ni x\to x\log x$ is non-positive on $[0,1]$ with a minimum at $x=e^{-1}$ while it is positive on $\{x>1\}$.
Set  $h_k:= m^{-1}_\phi e^{\alpha(u_k-u)}\min(g,k) $ and note that $h_k\leq m_\phi^{-1} e^{C}g$ for a uniform constant $C>0$. By the above we have
\begin{eqnarray*}
    \Ent_{\theta}(u_k)&= & \int_X h_k\log (h_k) \,\omega^n \\
    &\leq &  2e^{-1}+ \int_X \log\left(\frac{e^Cg}{m_\phi}\right)\frac{e^Cg}{m_\phi}\omega^n\\
    &=& 2e^{-1}+ e^C\left(\int_X \log(g \,m_\phi^{-1})\frac{g\omega^n}{m_\phi}+ C\int_X \frac{g\omega^n}{m_\phi}\right)\\
    &=&2e^{-1}+ e^C\left(\Ent_\theta(u)+C_1\right)<+\infty.
\end{eqnarray*}
Thanks to Lebesgue Dominated Convergence Theorem we can infer that
$$
\Ent_{\theta}(u_k)=\int_X h_k\log (h_k) \,\omega^n\longrightarrow \Ent_{\theta}(u)= \int_X g\log (g) \,\omega^n.
$$
Very similar arguments show the convergence $\Ent_\eta(v_k)\to \Ent_\eta(v)$. This concludes the proof
\end{proof}

We can now prove Theorem \ref{thm:Convexity_Mabuchi_Anal_Sing}.

\begin{proof}[Proof of Theorem \ref{thm:Convexity_Mabuchi_Anal_Sing}]
Let $(u_t)_{t\in[0,1]}$ be the psh geodesic connecting $u_0,u_1\in \PSH(X,\theta)$, functions with $\phi$-relative minimal singularities.
As $u_t \geq P_\theta(u_0, u_1)$, $u_t$ is with $\phi$-minimal singularities for all $t\in [0,1]$.
Set $v_t=\mathbf{L}(u_t)$, $t\in [0,1]$. 
By construction $v_t$ is an $\eta$-psh function {with minimal singularities} on $Y$. Moreover, Lemma \ref{lem:Bijection}(ii) ensures that $v_t$ is a psh geodesic in $Y$ joining $v_0$ and $v_1$. By \cite[Theorem 4.2]{DNL21} we know that $t\rightarrow \cM_{\eta,w}(v_t) $ is convex in $t$, while \cite[Theorem 3.12]{DDL1} ensures that $t\to E(\eta; v_t,\hat{\varphi})=E(\eta;v_t,0)-E(\eta; \hat{\varphi},0)$ is linear. Thus from Proposition \ref{cor:Mabuchi_UpDown} it follows that
{\small
\begin{multline}\label{eqn:almost convexity}
    \cM_{\theta,\varphi}(u_t)-t\cM_{\theta,\varphi}(u_1)-(1-t)\cM_{\theta,\varphi}(u_0)
    \leq\\
    \frac{n}{\vol(\eta)}\sum_{E_j\not\subset E_{nK}(\eta)} a_j \vol(\eta_{|E_j})\Big(tE_{E_j}(\eta_{|E_j}; v_{1|E_j}, \hat{\varphi}_{|E_j})+(1-t)E_{E_j}(\eta_{|E_j}; v_{0|E_j}, \hat{\varphi}_{|E_j})-E_{E_j}(\eta_{|E_j}; v_{t|E_j}, \hat{\varphi}_{|E_j})\Big).
\end{multline}
}
Set $\mathcal{E}_t:=\Big(tE_{E_j}(\eta_{|E_j}; v_{1|E_j}, \hat{\varphi}_{|E_j})+(1-t)E_{E_j}(\eta_{|E_j}; v_{0|E_j}, \hat{\varphi}_{|E_j})-E_{E_j}(\eta_{|E_j}; v_{t|E_j}, \hat{\varphi}_{|E_j})\Big)$. By the cocycle property of the Monge-Ampère energy (see e.g. Proposition \ref{prop:cocycleE1}), we then have for any $j=1,\dots,m$,
\begin{align*}
    E_{E_j}(\eta_{|E_j}; v_{1|E_j}, \hat{\varphi}_{|E_j})-E_{E_j}(\eta_{|E_j}; v_{t|E_j}, \hat{\varphi}_{|E_j})=E_{E_j}(\eta_{|E_j};v_{1|E_j},v_{t|E_j})\leq \lVert v_1-v_t\rVert_\infty
\end{align*}
 and similarly replacing $v_1$ by $v_0$. Therefore, as $\lVert v_s-v_t\rVert_\infty=\lVert u_s-u_t\rVert_\infty \leq \lvert s-t\rvert \lVert u_0-u_1\rVert_\infty$, we get
\begin{equation}
    \label{eqn:estimate energy on relative canonical}
    \mathcal{E}_t\leq 2t(1-t)\lVert u_0-u_1\rVert_\infty.
\end{equation}
Since $t(1-t)\leq 1/4$, combining (\ref{eqn:almost convexity}) with (\ref{eqn:estimate energy on relative canonical}) yields
$$
\cM_{\theta,\varphi}(u_t)\leq t\cM_{\theta,\varphi}(u_1)+(1-t)\cM_{\theta,\varphi}(u_0)+\frac{n\lVert u_0-u_1\rVert_\infty}{2 \vol(\eta)} \sum_{E_j\not\subset E_{nK}(\eta)} a_j \vol(\eta_{|E_j}).
$$
The proof is finished since $\sum_{j=1}^m a_j [E_j]=[K_{Y/X}]$.
\end{proof}

We conclude this subsection with the following important consequence of Theorem \ref{thm:Convexity_Mabuchi_Anal_Sing}.

\begin{corollary}\label{cor:Bound on entropy along geodesics, gentle analytic singularities case}
    Let $C_1>0$ and let $u_0,u_1\in \PSH(X,\theta)$ with $\phi$-relative minimal singularities such that $\Ent_\theta(u_0),\Ent_\theta(u_1)\leq C_1$. Then there exist positive constants $C_2,C_3$ such that
    $$
    \Ent_\theta(u_t)\leq C_1+C_2+ C_3\{\eta^{n-1}\}\cdot K_{Y/X}
    $$
    for any $t\in [0,1]$. Moreover $C_2,C_3$ only depend on $n,X, \{\omega\},\{\theta\}, \lVert u_0-u_1\rVert_\infty$ and on a lower bound of $m_\phi$.
\end{corollary}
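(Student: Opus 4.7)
The strategy is to apply Theorem \ref{thm:Convexity_Mabuchi_Anal_Sing} to an auxiliary $\varphi$ with $\theta_\varphi^n = m_\phi\omega^n$ and to isolate $\Ent_\theta(u_t)$ from the resulting almost-convexity; the residual energy contributions then either vanish (by linearity of the Monge-Amp\`ere energy along the geodesic) or are absorbed into a $\lVert u_0-u_1\rVert_\infty/m_\phi$ term (by the Lipschitz property and cohomological mass bounds).

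First I would choose $\varphi \in \cE(X,\theta,\phi)$ solving $\theta_\varphi^n = m_\phi\omega^n$ with $\sup_X\varphi = 0$. Existence and $\phi$-relative minimal singularities follow from Monge-Amp\`ere solvability in $\cE(X,\theta,\phi)$ with smooth density, using the regularity discussion opening Section \ref{sec: convex}. With this choice one has $\Ent(u,\varphi) = \Ent_\theta(u)$ whenever $m_u = m_\phi$. Applying Theorem \ref{thm:Convexity_Mabuchi_Anal_Sing}, expanding $\cM_{\theta,\varphi}$ on both sides via (\ref{eq: K-energy}), and isolating $\Ent_\theta(u_t)$ produces
\begin{equation*}
\Ent_\theta(u_t) \leq (1-t)\Ent_\theta(u_0) + t\Ent_\theta(u_1) + \tfrac{n\lVert u_0-u_1\rVert_\infty}{2 m_\phi}\{\eta^{n-1}\}\cdot K_{Y/X} + \bar S_\varphi \Delta_E - n\Delta_{\Ric},
\end{equation*}
where $\Delta_E := (1-t)E(\theta;u_0,\varphi) + tE(\theta;u_1,\varphi) - E(\theta;u_t,\varphi)$ and $\Delta_{\Ric}$ is its analogue for $E_{\Ric(\theta_\varphi)}$.

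The crucial observation is that $\Delta_E = 0$. Indeed, by Lemma \ref{lem:Bijection}(ii,iv) the pullback $v_t := \mathbf{L}(u_t)$ is a psh geodesic in $\cE^1(Y,\eta)$ connecting two minimal-singularity potentials, and $E(\theta;u_t,\varphi) = E(\eta;v_t,\hat\varphi)$. Combining Proposition \ref{prop: energy linear} on $(Y,\eta)$ with the cocycle $E(\eta;v_t,\hat\varphi) = E(\eta;v_t,V_\eta) - E(\eta;\hat\varphi,V_\eta)$ (noting that $\hat\varphi$ is $t$-independent) forces $t \mapsto E(\theta;u_t,\varphi)$ to be affine, so the $\bar S_\varphi$-contribution disappears.

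For $\Delta_{\Ric}$ I would invoke a cocycle identity $E_{\Ric(\theta_\varphi)}(\theta;u_i,\varphi) = E_{\Ric(\theta_\varphi)}(\theta;u_i,u_t) + E_{\Ric(\theta_\varphi)}(\theta;u_t,\varphi)$, proved by integration by parts on approximating bounded potentials analogously to Proposition \ref{prop: cocy model} (using that $\Ric(\theta_\varphi) = \Ric(\omega)$ is $d$-closed), to reduce matters to bounding $|E_{\Ric(\theta_\varphi)}(\theta;u_i,u_t)|$. Splitting $\Ric(\omega) = T_1 - T_2$ with $T_1 := \Ric(\omega) + C\omega$ and $T_2 := C\omega$ closed smooth and semipositive for $C \gg 1$, the mass monotonicity under singularity type \cite[Theorem 1.2]{WN19} together with the cohomological identity $\int_X T_j\wedge\theta_{V_\theta}^{n-1} = \{T_j\}\cdot\{\theta\}^{n-1}$ yields $|E_{\Ric(\theta_\varphi)}(\theta;u_i,u_t)| \leq C_0(n,X,\{\omega\},\{\theta\}) \lVert u_i-u_t\rVert_\infty / m_\phi$, and the Lipschitz bound \eqref{eq: Lip} gives $\lVert u_i-u_t\rVert_\infty \leq \lVert u_0-u_1\rVert_\infty$. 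Combined with $\Ent_\theta(u_i)\leq C_1$, this establishes the desired inequality with $C_2 = nC_0\lVert u_0-u_1\rVert_\infty/m_\phi$ and $C_3 = n\lVert u_0-u_1\rVert_\infty/(2 m_\phi)$, both depending only on the listed quantities. The main obstacle is justifying the cocycle for the twisted energy $E_{\Ric(\theta_\varphi)}$ and the cohomological mass bound on mixed non-pluripolar products twisted by a smooth closed form; both are standard approximation/Stokes arguments in the spirit of Propositions \ref{prop: cocy model} and \ref{cor:Mabuchi_UpDown}.
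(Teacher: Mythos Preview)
Your argument is correct and essentially matches the paper's own proof: expand the almost-convexity of $\cM_{\theta,\varphi}$ from Theorem~\ref{thm:Convexity_Mabuchi_Anal_Sing}, isolate the entropy, and bound the residual energy terms via the cocycle property and the Lipschitz estimate~\eqref{eq: Lip}. The one genuine difference is your observation that $\Delta_E=0$ exactly, by linearity of $t\mapsto E(\eta;v_t,V_\eta)$ along the geodesic upstairs; the paper instead bounds $|E(\theta;u_t,\varphi)-E(\theta;u_i,\varphi)|\leq \lVert u_t-u_i\rVert_\infty$ directly and implicitly absorbs $|\bar S_\varphi|$ into the constant (which is harmless since $|\bar S_\varphi|\leq \tfrac{n}{m_\phi}\int_X C\omega\wedge\theta_{V_\theta}^{n-1}$ depends only on the listed data). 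Your route is slightly cleaner in that it eliminates the $\bar S_\varphi$-term outright, but both arguments are equivalent in substance; the twisted-energy cocycle you flag as the main obstacle is exactly what the paper invokes without further comment, and is indeed routine for potentials of the same singularity type against the smooth form $\Ric(\omega)$.
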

\begin{proof}
    Using \eqref{eqn:Formula almost convexity for analytic singularities} we obtain
    \begin{eqnarray*}
    \Ent_\theta(u_t) &\leq & (1-t)\Ent_\theta(u_0)+t\Ent_\theta(u_1)+\frac{n \lVert u_0-u_1\rVert_\infty}{2m_\phi}\{\eta^{n-1}\}\cdot K_{Y/X}+\\
     &&+\bar{S}_\varphi (1-t)\left({E}(\theta; u_t, \varphi)- {E}(\theta; u_0, \varphi)\right) +\bar{S}_\varphi \, t \left({E}(\theta; u_t, \varphi)-{E}(\theta; u_1, \varphi)\right)\\
    &&+n (1-t)\left({E}_{\Ric(\omega)}(\theta; u_t, \varphi)- {E}_{\Ric(\omega)}(\theta; u_0, \varphi)\right) +nt \left({E}_{\Ric(\omega)}(\theta; u_t, \varphi)-{E}_{\Ric(\omega)}(\theta; u_1, \varphi)\right).
    \end{eqnarray*}
 By the cocycle property \cite[Theorem 5.3]{DDL6}%\eqref{prop:cocycleE1}
 % \textcolor{red}{ci serve quella generale non per min sing}, for $i=0,1$, we get
    \begin{eqnarray*}
       {E} (\theta; u_t, \varphi)-{E}(\theta; u_i, \varphi)&=& \frac{1}{(n+1)\, m_\phi}\sum_{k=0}^{n} \int_X(u_t-u_i) \theta_{u_t}^k\wedge \theta_{u_i}^{n-k}  \\
        &\leq & \frac{\lVert u_t-u_i\rVert_{\infty}}{  m_\phi}\int_X \theta_{\varphi}^{n}
        \leq  \lVert u_t-u_i\rVert_{\infty}.
    \end{eqnarray*}
 
     Again by the cocycle property and the fact that $\Ric(\omega)\leq C \omega$ we get
    \begin{eqnarray*}
     &&  {E}_{\Ric(\omega)} (\theta; u_t, \varphi)-{E}_{\Ric(\omega)}(\theta; u_i, \varphi)\\
       &&= \frac{1}{n\, m_\phi}\sum_{k=0}^{n-1} \int_X(u_t-u_i) \Ric(\omega) \wedge \theta_{u_t}^k\wedge \theta_{u_i}^{n-k-1}  \\
        && \leq  C\frac{\lVert u_t-u_i\rVert_{\infty}}{ m_\phi}\int_X \omega \wedge \theta_{\varphi}^{n-1} \leq C' \lVert u_t-u_i\rVert_{\infty}.
    \end{eqnarray*}
    %where in the last inequality we also used \cite[Theorem 2.4]{DDL2}. 
    As $\lVert u_t-u_s \rVert_\infty\leq \lvert t-s\rvert \lVert u_1-u_0 \rVert_\infty$ for any $s,t\in[0,1]$, the conclusion follows.
\end{proof}

\subsection{Transcendental Fujita Approximation}\label{sec:fuj}
We give the following transcendental definition of the well-known Fujita approximation of big line bundle on projective varieties \cite{Fuj94}.
\begin{definition}\label{defi:Transcendental Fujita}
    We say that a sequence of model type envelopes $(\phi_k)_k\subset \PSH(X,\theta)$ is a \emph{transcendental Fujita approximation of $\{\theta\}$} if
    \begin{itemize}
        \item[i)] $\phi_k\in \mN_\theta$ for any $k\in \N$;
        \item[ii)] $\int_X \theta_{\phi_k}^n\to \vol(\theta)$ as $k\to +\infty$.
    \end{itemize}
    We also say that a transcendental Fujita approximation $(\phi_k)_k$ is \emph{monotone} if $\phi_k\nearrow V_\theta$.
\end{definition}
We note that as an immediate consequence of Theorem \ref{thm:weak convergence}, any $(\phi_k)_k$ such that $\phi_k\nearrow V_\theta$ satisfies the condition in (ii).

\medskip

\noindent The following result gives another interpretation of such transcendental Fujita approximation:
\begin{lemma}
    \label{lem:Classic Fujita}
    There exists a transcendental Fujita approximation of $\{\theta\}$ if and only if there exists a sequence of data $(\pi_k,\beta_k,F_k)$, where $\pi_k:Y_k\to X$ is a modification from $Y_k$ compact Kähler manifold od complex dimension $n$, $\beta_k$ is a big and nef class, $F_k$ is an effective $\R$-divisor, such that
    \begin{itemize}
        \item[i)] $\pi_k^*\{\theta\}=\beta_k+\{F_k\}$ for any $k\in \N$;
        \item[ii)] $\vol(\beta_k)\to \vol(\theta)$ as $k\to +\infty$.
    \end{itemize}
\end{lemma}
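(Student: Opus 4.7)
The plan is to prove both directions separately, leveraging the definition of $\mathcal{N}_\theta$ (Definition \ref{defi:N_theta}) and the volume identity in Lemma \ref{lem:properties N_theta}(i).

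The forward direction is essentially a reformulation. If $(\phi_k)_k$ is a transcendental Fujita approximation, then by definition each $\phi_k\in \mathcal{N}_\theta$ comes with a modification $\pi_k: Y_k\to X$ and a decomposition $\pi_k^*\theta_{\phi_k}=S_k+[F_k]$, where $S_k$ is a positive closed current with minimal singularities representing a big and nef class $\beta_k:=\{S_k\}$. Passing to cohomology classes yields (i), while Lemma \ref{lem:properties N_theta}(i) identifies $\vol(\beta_k)=m_{\phi_k}=\int_X\theta_{\phi_k}^n$, which tends to $\vol(\theta)$ by assumption (ii) of Definition \ref{defi:Transcendental Fujita}. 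This gives (ii).

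For the reverse direction, I will construct model potentials $\phi_k\in \mathcal{N}_\theta$ from the data $(\pi_k,\beta_k,F_k)$. Fix a smooth representative $\eta_k\in\beta_k$. Since $\beta_k$ is big and nef, the envelope $\tilde{\phi}_k:=V_{\eta_k}$ is a bounded $\eta_k$-psh function (in fact $C^{1,\bar 1}$ on the ample locus, as recalled in Section \ref{sec:prelim}), so that $S_k:=\eta_k+dd^c\tilde\phi_k$ is a positive current with minimal singularities in $\beta_k$. The sum $T_k:=S_k+[F_k]$ is then a closed positive $(1,1)$-current on $Y_k$ in the class $\beta_k+\{F_k\}=\pi_k^*\{\theta\}$, so by the $dd^c$-lemma there exists a qpsh function $\tilde u_k$ on $Y_k$ with $\pi_k^*\theta+dd^c\tilde u_k=T_k$. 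As $\pi_k$ has compact and connected fibers, the standard descent argument used in the proof of Lemma \ref{lem:Bijection}(i) (via \cite[Proposition 1.2.7(ii)]{BouThesis}) produces $u_k\in \PSH(X,\theta)$ with $u_k\circ \pi_k=\tilde u_k$ almost everywhere.

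I then set $\phi_k:=P_\theta[u_k]$. Birational invariance of the non-pluripolar product gives $\int_X\theta_{u_k}^n=\int_{Y_k}S_k^n=\vol(\beta_k)>0$, so by \cite[Theorem 3.14]{DDL6} together with Remark \ref{model}, $\phi_k$ is a model potential with $\int_X\theta_{\phi_k}^n=\int_X\theta_{u_k}^n=\vol(\beta_k)$, yielding condition (ii) of Definition \ref{defi:Transcendental Fujita}. To check that $\phi_k\in \mathcal{N}_\theta$, note that by \cite[Lemma 5.1]{DDL6}, $u_k$ and $\phi_k$ share the same multiplier ideal sheaves, hence by \cite[Theorem A]{BFJ08} the Lelong numbers of $\pi_k^*\phi_k$ along every prime divisor of $Y_k$ coincide with those of $\pi_k^*u_k$. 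Consequently $\pi_k^*\theta_{\phi_k}-[F_k]$ is a closed positive current $S_k'\in\beta_k$; since $\phi_k$ is less singular than $u_k$ and $S_k$ has bounded potential, $S_k'$ has minimal singularities as well, so $\phi_k\in \mathcal{N}_\theta$ with data $(\pi_k,\beta_k,F_k)$. The main delicate point is ensuring that the envelope $\phi_k=P_\theta[u_k]$ retains the full divisorial part $[F_k]$ upon pulling back, which is exactly what the combination \cite[Lemma 5.1]{DDL6} plus \cite[Theorem A]{BFJ08} guarantees; passing from $u_k$ to $\phi_k$ only makes the remainder $S_k'$ \emph{less} singular, hence minimally singular in $\beta_k$.
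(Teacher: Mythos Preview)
Your proof is correct and follows essentially the same approach as the paper: descend a minimal-singularity current plus $[F_k]$ to get $u_k\in\PSH(X,\theta)$, take the model envelope $\phi_k:=P_\theta[u_k]$, and use \cite[Lemma 5.1]{DDL6} together with \cite[Theorem A]{BFJ08} to verify that the divisorial part $[F_k]$ persists after pulling back $\theta_{\phi_k}$. One small inaccuracy: $V_{\eta_k}$ need not be globally bounded for a big and nef class (only locally bounded on $\Amp(\eta_k)$), but all you actually use is that $S_k$ has minimal singularities, so the argument goes through unchanged once you replace ``bounded potential'' with ``minimal singularities'' in the final step.
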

\begin{proof}
By definition of $\mN_\theta$ if $(\phi_k)_k$ is a transcendental Fujita approximation then there exist modifications $\pi_k:Y_k\to X$, currents with minimal singularities $S_k$ representing big and nef classes $\beta_k$ and effective $\R$-divisors $F_k$ such that $\pi_k^*\theta_{\phi_k}=S_k+[F_k]$ for any $k\in \N$. Thus one implication follows simply observing that $\int_X \theta_{\phi_k}^n=\int_{Y_k} S_k^n=\vol(\beta_k)$.\\
Vice-versa, assume to have a sequence of data $(\pi_k,\beta_k,F_k)_k$ as in the statement. Since $F_k$ is effective, for any $S_k$ current with minimal singularities in $\beta_k$ there exists a unique current $T_k=\theta+dd^c u_k$ such that
$$
\pi_k^*(\theta+dd^c u_k)=S_k+[F_k]
$$
(see \cite[Proposition 1.2.7.(ii)]{BouThesis}). Set $\phi_k:=P_\theta[u_k]$. By \cite[Lemma 5.1]{DDL6} we know that $\phi_k$ and $u_k$ have the same multiplier ideal sheaf and in particular they have the same Lelong numbers on any modification of $X$. Thus, since $S_k$ has minimal singularities and $\phi_k$ is less singular than $u_k$, we infer that
$$
\pi_k^*(\theta+dd^c \phi_k)=\tilde{S}_k+[F_k]
$$
for a positive and closed current with minimal singularities $\tilde{S}_k$ in $\beta_k$, i.e. $\phi_k\in\mN_\theta$. Moreover, as noticed above, $\int_X\theta_{\phi_k}^n=\int_Y S_k^n=\vol(\beta_k)$.
\end{proof}

The existence of a monotone trascendental Fujita approximation is basically a consequence of \cite{Dem92}:
\begin{lemma}
    \label{lem:Fujita}
There exists a monotone transcendental Fujita approximation of $\{\theta\}$.
\end{lemma}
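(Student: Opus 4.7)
The result is essentially a packaging of Demailly's regularization theorem \cite{Dem92} into the envelope language $P_\theta[\cdot]$ and the class $\mN_\theta$ set up in Lemma \ref{lem:properties N_theta}. Since $\{\theta\}$ is big, fix by \cite{DP04} a Kähler current with analytic singularities: $\psi \in \PSH(X,\theta)$, $\theta + dd^c\psi \geq 2\delta\omega$ for some $\delta>0$, normalized by $\sup_X\psi = 0$. Then apply Demailly's regularization to the extremal potential $V_\theta$ to produce, for each $k \in \N^*$, a function $\tilde\phi_k$ with analytic singularities satisfying $\theta + \varepsilon_k\omega + dd^c\tilde\phi_k \geq 0$ for some $\varepsilon_k \searrow 0$, with $\tilde\phi_k \to V_\theta$ almost everywhere and in $L^1$; after normalization one may assume $\tilde\phi_k \leq V_\theta$ (hence $\tilde\phi_k\leq 0$).

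To correct the slight loss of positivity (so as to land in $\PSH(X,\theta)$ rather than $\PSH(X,\theta+\varepsilon_k\omega)$), for $k$ large enough that $\varepsilon_k<2\delta$ set
\begin{equation*}
u_k := \Big(1-\tfrac{\varepsilon_k}{2\delta}\Big)\tilde\phi_k + \tfrac{\varepsilon_k}{2\delta}\psi .
\end{equation*}
A direct computation gives $\theta + dd^c u_k \geq \tfrac{\varepsilon_k^2}{2\delta}\omega \geq 0$, so $u_k\in\PSH(X,\theta)$ and $u_k\leq 0$. Locally, $u_k$ is a positive $\R$-linear combination of terms of the form $c\log\sum_j|f_j|^2$ plus a bounded function. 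Working on a common log resolution of the ideal sheaves underlying $\tilde\phi_k$ and $\psi$, the same argument used in the proof of Lemma \ref{lem:properties N_theta}(ii) shows $P_\theta[u_k] \in \mN_\theta$, and the mass $\int_X\theta_{u_k}^n$ is positive.

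To achieve monotonicity, replace $u_k$ by $v_k := \max(u_1,\ldots,u_k) \leq 0$, which is $\theta$-psh and non-decreasing in $k$. Since $u_k \to V_\theta$ almost everywhere (using $\tilde\phi_k \to V_\theta$ a.e.\ together with $\psi > -\infty$ outside a pluripolar set), we obtain $v_k \nearrow V_\theta$ almost everywhere; taking upper semicontinuous regularizations, this equality of limits holds pointwise. Defining $\phi_k := P_\theta[v_k]$, the monotonicity of singularity types $[v_k] \leq [v_{k+1}]$ passes to $\phi_k \leq \phi_{k+1} \leq V_\theta$. A further common log resolution of all the ideals of $u_1,\ldots,u_k$ and $\psi$ shows $\phi_k \in \mN_\theta$ by the same arguments as in Lemma \ref{lem:properties N_theta}(ii). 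Finally $\phi_k \nearrow V_\theta$ from the convergence of singularity types $[v_k]\nearrow [V_\theta]$ and the fact that $P_\theta[\,\cdot\,]$ is monotone in the singularity type.

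The main technical hurdle is verifying that the max-envelope $\phi_k = P_\theta[v_k]$ remains in $\mN_\theta$: although $v_k$ itself no longer has the clean local form $c\log\sum|f_j|^2+g$ of the paper's Definition \eqref{eqn:An_Sing}, on a common log resolution $\pi:Y\to X$ of all the involved ideals, the Siu decomposition of $\pi^*(\theta + dd^c v_k)$ splits as "a closed positive current with bounded potential representing a big and nef class" plus "an effective $\R$-divisor". The bigness and nefness of the residual part ultimately come from $\psi$, which contributes a strictly positive component $\tfrac{\varepsilon_k}{2\delta}(\theta+dd^c\psi) \geq \varepsilon_k\omega$ to the decomposition, guaranteeing that the nef class is big.
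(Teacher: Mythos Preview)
Your overall strategy---Demailly regularization of $V_\theta$, correction of the positivity loss by mixing in a fixed Kähler current $\psi$, then passing to envelopes---parallels the paper's. The genuine gap is in the monotonicity step, specifically in your claim that $\phi_k = P_\theta[v_k]$ with $v_k = \max(u_1,\dots,u_k)$ lies in $\mN_\theta$.

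Taking a common log resolution $\pi:Y\to X$ of the ideals underlying $u_1,\dots,u_k,\psi$ makes each $u_j\circ\pi$ locally of the form $\sum_l c_{j,l}\log|z_l|^2 + g_j$ with $g_j$ bounded, but the maximum
\[
v_k\circ\pi=\max_j\Big(\sum_l c_{j,l}\log|z_l|^2+g_j\Big)
\]
does \emph{not} in general become ``effective $\R$-divisor plus bounded''. After stripping the Siu divisor $\sum_l(\min_j c_{j,l})[z_l=0]$, the residual is
\[
\max_j\Big(\sum_l (c_{j,l}-\min_i c_{i,l})\log|z_l|^2+g_j\Big),
\]
which is bounded above but typically tends to $-\infty$ at intersection points of the $\{z_l=0\}$ (e.g.\ with two divisors and two indices $j$, if the minima along the two divisors are realized by different $j$'s). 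So the residual need not have bounded potential, and there is no reason its class $\pi^*\{\theta\}-\{G\}$ is nef, nor that a further blow-up fixes this when the $c_{j,l}$ are irrationally related. Hence $\phi_k\in\mN_\theta$ is not established.

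The paper sidesteps this by applying Demailly's approximation \emph{after} each max: starting from the Kähler currents $\theta_{\psi_k}$ with $\psi_k=\tfrac{1}{k}\psi+(1-\tfrac{1}{k})V_\theta$, it inductively chooses $\hat\psi_{k+1}$ with genuine analytic singularities (in the sense of \eqref{eqn:An_Sing}) satisfying $\hat\psi_{k+1}\geq \max(\hat\psi_k,\psi_{k+1})$. This uses that the max of two Kähler currents is again a Kähler current and that Demailly regularization produces an approximant \emph{above} the given potential. One then gets an increasing sequence $\hat\psi_k$ with analytic singularities, so Lemma~\ref{lem:properties N_theta}(ii) applies directly to give $\phi_k:=P_\theta[\hat\psi_k]\in\mN_\theta$, and the mass/model arguments yield $\phi_k\nearrow V_\theta$. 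Your argument can be repaired by inserting the same ``regularize after max'' step.
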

% \begin{lemma}
%     \label{lem:Fujita}
%     There exists a sequence of $\theta$-psh functions $\psi_k$ with gentle analytic singularities such that the sequence of model type potentials $\{\phi_k:=P_\theta[\psi_k]\}_{k\in\N}\subset \PSH(X,\theta)$ converges increasingly to $V_\theta$. 
%\end{lemma}
\begin{proof}
By Lemma \ref{lem:properties N_theta}(ii) and the lines below Definition \ref{defi:Transcendental Fujita} it is enough to produce a sequence of $\theta$-psh functions with analytic singularities $\hat{\psi}_k$ such that $\int_X \theta_{\hat{\psi}_k}^n>0$ and such that $\phi_k:=P_\theta[\hat{\psi}_k]$ increases to $V_\theta$.

An immediate consequence of the proof of Demailly's approximation theorem \cite[Proposition  3.7]{Dem92} is that for any $\theta_\psi$ K\"{a}hler current there exists $\psi' \geq \psi$ such that $\theta_{\psi'}$ is a K\"{a}hler current with %gentle
analytic singularities (each element of the approximating sequence satisfies this property).
Moreover observe that if $\theta_{\psi_1}, \theta_{\psi_2}$ are K\"{a}hler currents, then $\theta_{\max(\psi_1,\psi_2)}$ is a K\"{a}hler current as well since by \cite[Lemma 2.9]{DDL6}

$$\theta_{\max(\psi_1,\psi_2)} \geq {\bf 1}_{\{\psi_2\leq \psi_1\}} \theta_{\psi_1} +{\bf 1}_{\{\psi_1< \psi_2\}} \theta_{\psi_2}.$$   

Now let $
\theta_{\psi}$  be a K\"{a}hler current and let $\psi_k := \frac{1}{k} \psi + \left(1 - \frac{1}{k}\right) V_\theta.$  
Then 
$$ \theta_{\psi_k} =  \frac{1}{k} \theta_\psi  + \left(1-\frac{1}{k}\right) \theta_{V_\theta}$$  
is a K\"{a}hler current and  $\int_X \theta_{\psi_k}^n \to \int_X  \theta_{V_\theta}^n $ as $k$ goes to $+ \infty.$

Let $\hat{\psi}_1 $  be a $\theta $-psh function such 
that $\theta_{\hat{\psi}_1}$ is a 
K\"{a}hler current with %gentle
analytic singularities and  
$\hat{\psi}_1 \geq \psi_1.$ 
Inductively let 
$\hat{\psi}_{k+1} $ be a 
 $\theta $-psh function such that $\theta_{\hat{\psi}_{k+1}}$ is a K\"{a}hler current with %gentle
 analytic singularities and 
 $\hat{\psi}_{k+1} \geq     \max(\hat{\psi}_k, \psi_{k+1})$.
Then by construction we have that $\hat{\psi}_k$ is an increasing sequence, 
$ \hat{\psi}_k \geq \psi_k$
and $\theta_{\hat{\psi}_k}$  is a K\"{a}hler current with analytic singularities. 

By \cite[Theorem 1.2]{WN19} we know that $\int_X \theta_{{\psi}_k}^n \leq \int_X \theta_{\hat{\psi}_k}^n \leq \vol(\theta)$. Thus
$\int_X \theta_{\hat{\psi}_k}^n  
\to \int_X  \theta_{V_\theta}^n $ as $k$ goes to $+\infty$. We then consider $\phi_k:=P_\theta[\hat{\psi}_k]$. This sequence is increasing, it has analytic singularity type and $|\phi_k- \hat{\psi}_k|\leq C_k$, for some $C_k>0$.   
    Moreover, by construction, $\sup_X \phi_k=0 $ and $\phi_k\leq V_\theta$. \\
    Let $\phi = (\sup_k \phi_k)^* \in\PSH(X,\theta),$ where $^*$ is the upper semicontinuous regularization. Then $\sup_X \phi =0$
    and by \cite[Remark 2.4]{DDL6}   we have
    $$
    \int_X \theta_{\phi_k}^n= \int_X \theta_{\hat{\psi}_k}^n \to \int_X \theta_{V_\theta}^n
    $$ as $k \to + \infty.$
    Hence, $$  \int_X \theta_{V_\theta}^n = \int_X \theta_\phi^n.$$ 
    
   On the other hand $\phi$ is an increasing limit of model type envelopes, hence  by  \cite[Corollary 4.7]{DDL5} it is a model type envelope,  
    therefore
    $\phi=V_\theta$, concluding the proof.
\end{proof}

%\begin{remark}
 %   In Lemma \ref{lem:Fujita} we proved the existence of a monotone transcendental Fujita approximation $(\phi_k)_k$ such that $\phi_k=P_\theta[\psi_k]$ for $\psi_k$ with analytic singularities type. In particular, as explained in Lemma \ref{lem:properties N_theta}, one can find associated sequences of big and nef classes $\beta_k$ as in Lemma \ref{lem:Classic Fujita} that have \emph{bounded potentials}. Moreover, a further analysis of the proof provided in Lemma \ref{lem:Fujita} and of \cite[Proposition 3.7]{Dem92} shows that the sequence $\psi_k$ have \emph{gentle} analytic singularities (see \cite[Definition 2.6]{DRNZ}), which means that the big and nef classes $\beta_k$ can be chosen to be \emph{semipositive}, i.e. they admit smooth semipositive closed forms. 
  %  However Definition \ref{defi:Transcendental Fujita} allows us to choose more general approximations where the big and nef classes $\beta_k$ are not necessarily semipositive. 
%\end{remark}

\subsection{(Almost) Convexity of the Mabuchi functional}
Let $\{\phi_k\}_{k\in\N}$ be a monotone transcendental Fujita approximation of $\{\theta\}$ (see Definition \ref{defi:Transcendental Fujita}) 
and set $V_k:=\int_X \theta_{\phi_k}^n $, $ V:=\mathrm{Vol}(\theta)$. We note that by definition we have $V_k>0$ for any $k$. %Since $V_k \nearrow V$, there exists $k_0$ such that $V_{k_0}>0$. Up to re-labelling the sequence $\phi_k$ we can assume w.l.o.g. that $V_1>0$. 
Let $u   \in \mathcal{E}(X,\theta)$ and 
let us consider  the function $$u_{k}:=P_\theta[\phi_k](u).$$
We wish to collect some properties of the correspondence $u \to u_k.$ 
\begin{lemma} \label{lem_projection}
Let $u \in \mathcal{E}(X,\theta),$ then the correspondence $u \to u_k$ has the following properties:
 \begin{itemize}
        \item[(i)] $u_k \in \mathcal{E}(X,\theta,\phi_k).$
        \smallskip
        \item[(ii)] The sequence $u_{k}$ increases to $u$ as $k$ goes to $+ \infty$ outside of a pluripolar set.
        \smallskip
        \item[(iii)] The set $$S :=  \{ x \in X : u_{k}(x) = u(x)  \ 
        \mbox{for some $k \geq 1$} \} $$  
        $$ =  \{ x \in X :  \mbox{for some $k \geq 1,$ and for all $l \geq k$ we have \ } u_{l}(x) = u(x)  \  \} $$  has full mass with respect to $\theta_{u}^n.$
        \smallskip
        \item[(iv)] If $u,v \in \psh(X,\theta)$  satisfy $|u- v| \leq C$   then  $|u_k-v_k| \leq C$  for all $k \geq 1.$  
        \smallskip
        \item[(v)] We have  $ \theta_{u_k}^n = g_k \theta_u^n$ with $0 \leq g_k \leq 1$ and $g_k$ increasing almost everywhere with respect to $\theta_u^n$ to the constant function $1$ as $k \to + \infty.$ In particular if $\Ent_\theta(u) $  is finite, then $\Ent_\theta(u_k)$  is uniformly bounded independent of $k \geq 1.$ 
        \item[(vi)]  Assume $u_0,u_1\in \mathcal{E}^1(X,\theta) $ have the same singularity type. Let $t\to u_t$ be the  psh geodesic defined in the interval $[0,1]$ joining $u_0,u_1$,
        and let  $t\to u_{t,k}$ be the  psh geodesic joining $u_{0,k},u_{1,k}.$ Then for all $t \in [0,1]$  the sequence $ u_{t,k}$ is increasing to $u_t$ outside of a pluripolar set as $k$ goes to $+ \infty.$ 
        
    \end{itemize} \label{u,u_k}
\end{lemma}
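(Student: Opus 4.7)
The six assertions are the natural outcomes of the theory of envelopes of prescribed singularity type (as developed in \cite{DDL2,DDL5,DDL6}) specialized to $u_k=P_\theta[\phi_k](u)$ with $\phi_k\in\mN_\theta$. Items (i), (iii), (iv) are short; (ii) needs a mass-convergence plus domination argument; (v) is a contact-set computation combined with an elementary entropy estimate; and (vi) is the most delicate step, where the main obstacle will be promoting the easy inequality $\lim_k u_{t,k}\leq u_t$ to an equality.

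\textbf{Proofs of (i)--(iv).} For (i), since $\phi_k\in\mN_\theta$ is a model potential with $\int_X\theta_{\phi_k}^n=V_k>0$, the envelope theory of \cite{DDL6} yields $u_k\simeq \phi_k$ and $\int_X\theta_{u_k}^n=V_k$, so $u_k\in\mathcal{E}(X,\theta,\phi_k)$. For (iv), the elementary inequality $P_\theta(\phi_k+M, v+C)\leq P_\theta(\phi_k+M, v)+C$ (valid for any $C\geq 0$), applied symmetrically in $u,v$ and taking $M\to+\infty$, gives $|u_k-v_k|\leq C$. For (ii), the map $\phi\mapsto P_\theta[\phi](u)$ is order-preserving, so $u_k\nearrow w:=(\sup_k u_k)^*\leq u$; Remark \ref{rk:conv_incr} applied with $\chi_k\equiv 1$ yields $\int_X\theta_w^n=\lim_k V_k=\vol(\theta)=\int_X\theta_u^n$, so $w\in\mathcal{E}(X,\theta)$, and the domination principle forces $w=u$ outside a pluripolar set. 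Then (iii) is immediate: as $(u_k)$ is non-decreasing we have $\{u_k=u\}\subset\{u_{k+1}=u\}$, hence $S=\bigcup_k\{u_k=u\}$ and $X\setminus S=\{\lim_k u_k<u\}$ is pluripolar by (ii), so a $\theta_u^n$-null set.

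\textbf{Proof of (v).} The contact-set formula for envelopes of prescribed singularity (see \cite[Theorem A]{DDL4} and the relative version of the arguments in \cite[Theorem 3.8]{DDL5}) reads
\[
\theta_{u_k}^n=\mathbf{1}_{\{u_k=u\}}\,\theta_u^n,
\]
so $g_k=\mathbf{1}_{S_k}$ with $S_k=\{u_k=u\}$. The inclusion $S_k\subset S_{k+1}$ observed in (iii) together with $\theta_u^n(X\setminus\bigcup_k S_k)=0$ imply $g_k\nearrow 1$ $\theta_u^n$-almost everywhere. Writing $\theta_u^n=f\omega^n$, the $\theta$-entropy takes the form
\[
\Ent_\theta(u_k)=\frac{1}{V_k}\int_X g_k f\log(g_k f)\,\omega^n-\log V_k,
\]
and since $x\log x$ is non-negative on $[1,+\infty)$ and bounded below by $-e^{-1}$ on $[0,1]$, the inequality $0\leq g_k f\leq f$ gives
\[
\int_X g_k f\log(g_k f)\,\omega^n\leq \int_{\{f\geq 1\}} f\log f\,\omega^n\leq \int_X f\log f\,\omega^n+e^{-1},
\]
which is finite under the assumption $\Ent_\theta(u)<+\infty$; since $V_k\to \vol(\theta)>0$, this yields the uniform bound.

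\textbf{Proof of (vi).} By (iv), $u_{0,k}$ and $u_{1,k}$ share singularity type, so the psh geodesic $(u_{t,k})$ is well defined and Lipschitz in $t$ with constant $\lVert u_0-u_1\rVert_\infty$ by \eqref{eq: Lip}. Since $u_{i,k}\leq u_{i,k+1}\leq u_i$ for $i=0,1$, the supremum characterization of the psh geodesic yields $u_{t,k}\leq u_{t,k+1}\leq u_t$ for every $t\in [0,1]$. Setting $\tilde U(x,z):=(\sup_k U_k(x,z))^*$ with $U_k(x,z)=u_{\log|z|,k}(x)$, we obtain a $\pi^*\theta$-psh function on $X\times A$ dominated by $U$ whose boundary slices agree with $u_0,u_1$ outside a pluripolar set by (ii); thus $\tilde U\in\mathcal{S}_{[0,1]}(u_0,u_1)$ and $\tilde U\leq U$ by maximality. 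The reverse inequality is the crux of the argument: I would prove it by applying the envelope operator $P_{\pi^*\theta}[\pi^*\phi_k]$ in the $X$-variable to the complexified geodesic $U$, producing a $\pi^*\theta$-psh function on $X\times A$ whose boundary slices are exactly $u_{0,k}, u_{1,k}$; it is therefore a subgeodesic joining $u_{0,k}$ and $u_{1,k}$ and hence dominated by $U_k$. Letting $k\to+\infty$ and invoking the fiberwise version of (ii) then gives $U\leq\tilde U$ outside a pluripolar set, finishing the proof.
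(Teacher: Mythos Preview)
Your treatment of (i), (iii), (iv) is essentially fine, but (ii), (v) and especially (vi) have gaps. In (ii), the ``domination principle'' does not give $w=u$ from $w\leq u$ and $w\in\mathcal{E}(X,\theta)$ alone (any two comparable full-mass potentials would otherwise coincide). The paper instead feeds the contact-set inequality $\theta_{u_k}^n\leq\mathbf{1}_{\{u_k=u\}}\theta_u^n$ of \cite[Theorem~3.8]{DDL2} into the weak limit to obtain $\theta_w^n=\theta_u^n$, then invokes uniqueness \cite[Theorem~3.13]{DDL6} and a point of $S$ to kill the additive constant. In (v) you assert the \emph{equality} $\theta_{u_k}^n=\mathbf{1}_{\{u_k=u\}}\theta_u^n$, but only $\leq$ is standard; hence $g_k$ need not be an indicator and your monotonicity argument via $S_k\subset S_{k+1}$ is unjustified. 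The paper instead proves $g_j\leq g_k$ from the nested identity $P_\theta[\phi_j](u_k)=u_j$ and a second application of the contact-set inequality, which gives $g_j\theta_u^n=\theta_{u_j}^n\leq\mathbf{1}_{\{u_j=u_k\}}\theta_{u_k}^n\leq g_k\theta_u^n$.

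The substantive gap is in (vi). Your plan is to apply $P_\theta[\phi_k]$ \emph{fiberwise} to the complexified geodesic $U$ and claim the result is $\pi^*\theta$-psh on $X\times A$. This is not known: the operator $\psi\mapsto P_\theta[\phi_k](\psi)$ is not shown to preserve plurisubharmonic dependence on a parameter, and there is no Kiselman-type principle for this envelope. (Taking instead the envelope $P_{\pi^*\theta}[\pi_X^*\phi_k](U)$ on $X\times A$ would restore plurisubharmonicity, but then you lose control of the boundary slices, since restricting an envelope is not the envelope of the restriction.) The paper avoids this obstacle by an energy argument: via Lemma~\ref{lem:Bijection}(ii),(iv) and Propositions~\ref{prop: energy linear}, \ref{prop: cocy model}, the map $t\mapsto E(\theta;u_{t,k},u_{0,k})$ is linear; since $u_{t,k}-u_{0,k}$ is uniformly bounded by (iv), Theorem~\ref{thm:weak convergence} lets one pass to the limit and conclude that $t\mapsto E(\theta;v_t,V_\theta)$ is linear for $v_t:=(\lim_k u_{t,k})^*$. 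As $t\mapsto E(\theta;u_t,V_\theta)$ is linear with the same endpoints, the two energies coincide; since $v_t\leq u_t$, strict monotonicity of $E$ \cite[Lemma~2.9]{DDL5} forces $v_t=u_t$.
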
 

\begin{proof}
Observe that $u+C\in \mathcal{E}(X, \theta)$ for all $C\in \R$.  Since $\int_X \theta_{\phi_k}^n>0$, $\phi_k=P_\theta[\phi_k]$ and $P_\theta(V_\theta)=V_\theta$, if we apply  \cite[Proposition 5.3]{DDL5}  with $\Phi = \phi_k, $ and $\Psi = V_\theta$ we obtain that  $P_\theta(u+C, \phi_k)\in \mathcal{E}(X, \theta, \phi_k)$. By definition $P_\theta[\phi_k](u) \leq \phi_k$ and it is the increasing limit of $P_\theta(u+C, \phi_k)$, hence
$$\int_X \theta_{\phi_k}^n=\int_X \theta^n_{P_\theta(u+C, \phi_k)} \leq \int_X \theta^n_{P_\theta[\phi_k](u)} \leq \int_X \theta_{\phi_k}^n,$$
this proves (i).\\
Let $\hat{u}$ be the upper semi-continuous regularization of the limit of the increasing sequence $u_{k},$ then $\hat{u} \leq u$ and the set 
$E = \{ x \in X : \sup_k u_k < \hat{u}(x)  \ \mbox{or}  \ \hat{u}(x) = - \infty \}$ is pluripolar, hence it has $\theta_u^n$ measure $0.$ Now $\theta_{ \hat{u}}^n$ is the weak limit of $\theta_{u_{k}}^n$  by Remark \ref{rk:conv_incr}.  
By (i),  $\int_X \theta_{u_k}^n = \int_X \theta_{\phi_k}^n \to \vol(\theta)$ as $k$ goes to $+ \infty.$
Since $\hat{u} \leq u,$ it follows that   
 $\int_X \theta_{\hat{u}}^n = \int_X \theta_{u}^n = \vol(\theta).$ 
By \cite[Theorem 3.8]{DDL2} we have 
$$\theta_{u_{k}}^n\leq \mathbf{1}_{\{u_{k}=u\}}\theta_{u}^n,$$ therefore 
$$\theta_{\hat{u}}^n \leq \mathbf{1}_{S} \theta_{u}^n.$$
Since $u$ and  $\hat{u}$ have the same mass,
$$\theta_{\hat{u}}^n = \theta_{u}^n$$ and $S$ has full mass with respect to $\theta_u^n.$
Therefore $\hat{u}$ and $u$ differ by a constant (see for example \cite[Theorem 3.13]{DDL6}.
Now if $x_0 \in S \setminus E,$ we have $u(x_0) = \hat{u}(x_0) > - \infty,$ hence $u = \hat{u}.$ This proves (ii) and (iii).
The map $u \to P_\theta[\phi_k](u)$  is monotone and  $P_\theta[\phi_k](u +C) = P_\theta[\phi_k](u) + C$ for all $C \in \mathbb{R}.$ 
Then (iv) follows.
Now, the inequality $\theta_{u_{k}}^n\leq \mathbf{1}_{\{u_{k}=u\}}\theta_{u}^n$ also  says that $$\theta_{u_{k}}^n = g_k \theta_{u}^n,$$ with $0 \leq g_k \leq 1.$ Moreover for $j \leq k$  we have $\phi_j \leq \phi_k$ and  $$P[\phi_j](u_k)=P[\phi_j](P[\phi_k](u)) = P[\phi_j](u)= u_j.$$
    \cite[Theorem 3.8]{DDL2} implies that $g_k$ is increasing in $k$ since
    $$ g_j \theta_u^n=\theta_{u_j}^n\leq \mathbf{1}_{\{u_{j}=u_k\}}\theta_{u_k}^n \leq \theta_{u_k}^n = g_{k} \theta_{u}^n.$$ Now by the above $\theta_{u_k}^n $ converges weakly to $\theta_u^n,$ hence $g_k$ converges to the constant function $1$ almost everywhere with respect to $\theta_u^n.$
Moreover since  $\log(m_{\phi_k})$ is uniformly bounded we derive (v).

In order to prove (vi) we  note that $ u_{i,k-1} \leq u_{i,k}$, $i=0,1$, hence $u_{t,k-1}$ is a subgeodesic with respect to the end points $u_{0,k}, u_{1,k}$. This means that the sequence $u_{t,k}$ is increasing in $k.$ Moreover by the $t$-convexity of the geodesic and (iv) we have $u_{t,k} \leq \phi_k +C \leq V_\theta + C $ for some positive constant $C$ (indipendent of $k$), hence $u_{t,k} $   increases to a psh subgeodesic segment  $t\to v_t$ that by (ii) is joining $u_0$ and $u_1$. 
Now by maximality $u_{t} \geq P_\theta(u_0,u_1),$ then each $u_t$ has the same singularity type of $u_0$ and $u_1$.
We claim that $v_t=u_t$. \\
    %We recall that $\phi_k=P_\theta[\psi_k]$ where $\psi_k$ has analytic singularities. In particular, $\phi_k$ and $\psi_k$ have the same singularity type. Then, thanks to the cocycle property \eqref{prop:cocycleE1}
    %$${E}(\theta; u_{t,k}, \phi_k)= {E}(\theta; u_{t,k}, \psi_k)+{E}(\theta; \psi_k, \phi_k).$$ 
  Combining Lemma \ref{lem:Bijection}(iv), Propositions \ref{prop: energy linear}  and \ref{prop: cocy model} we find that 
    $$t\to {E}(\theta; u_{t,k}, \phi_k)$$ is linear.
    Again by the cocycle property (Proposition \ref{prop: cocy model}) we have ${E}(\theta; u_{t,k}, \phi_k)= {E}(\theta; u_{t,k}, u_{0,k})+{E}(\theta; u_{0,k}, \phi_k).$ We then infer that $t\to {E}(\theta; u_{t,k}, u_{0,k})$ is linear.
Moreover, since $u_{t,k}$ is increasing to $u_t$, $u_{0,k}$ is increasing to $u_0$ and by (iv) we know that $(u_{t,k}-u_{0,k})$ is uniformly bounded, thanks to Theorem \ref{thm:weak convergence} we ensure that
    $$
    {E}(\theta; u_{t,k}, u_{0,k}) =\frac{1}{n+1}\sum_{j=0}^n \int_X (u_{t,k}-u_{0,k})\theta_{u_{t,k}}^j\wedge \theta_{u_{0,k}}^{n-j}\longrightarrow \frac{1}{n+1}\sum_{j=0}^n \int_X (v_t-u_0)\theta_{v_t}^j\wedge \theta_{u_0}^{n-j}={E}(\theta; v_t, u_0)
    $$
    as $k\rightarrow +\infty$. %We observe that the second assumption in the %statement of \cite[Theorem 2.6]{DDL6} is satisfied since $u_{t,k} \nearrow %u_t$, $ u_{0,k} \nearrow u_0$ and by \cite[Theorem 3.3]{DDL6} we have that %for any $j$,
    %$$\int_X \theta_{u_{t,k}}^j\wedge \theta_{u_{0,k}}^{n-j} \geq \int_X %\theta_{u_{t}}^j\wedge \theta_{u_{0}}^{n-j}.$$
Thus $t\to {E}(\theta; v_t, u_0)$ is linear. Using the cocycle property \eqref{prop:cocycleE1} we deduce that $t\to {E}(\theta; v_t, V_\theta)$ is linear. On the other hand $t\to {E}(\theta; u_t, V_\theta)$ is linear as well thanks to Proposition \ref{prop: energy linear}. Thus, for any $t\in[0,1]$ we have
$$ {E}(\theta; u_t, V_\theta)= (1-t)E(\theta; u_0, V_\theta)+tE(\theta; u_1, V_\theta)=  {E}(\theta; v_t, V_\theta). $$
By maximality of the psh geodesic, we have $u_t\geq v_t$. Hence $u_t=v_t$ by \cite[Lemma 2.9]{DDL5}. This proves (vi).
\end{proof}

\medskip

Now we want to apply the results of the previous subsection \ref{sec:Analytic_Singularities} to any monotone transcendental Fujita approximation $(\phi_k)_k$.
For each $k$ we set $\pi_k:Y_k\to X$ a modification from $Y_k$ compact Kähler manifold (of complex dimension $n$) such that
$$\pi_k^*\theta_{{\phi}_k}=(\eta_{k} +dd^c{\tilde{\phi}_k}) + [F_k]$$
where for each $k$, $\eta_k$ is a smooth and closed form representing a big and nef class while $\tilde{\phi}_k$ is a $\eta_k$-psh function with minimal singularities normalized by $\sup_{Y_k}\tilde{\phi}_k=0$.\\
Let $E_{1,k},\dots,E_{m_k,k}$ be the exceptional divisors of $\pi_k$, and $a_{j,k}>0$ such that $K_{Y_k/X}=\sum_{j=1}^{m_k} a_{j,k} E_{j,k}$. 
\medskip

\begin{defi} Given a monotone transcendental Fujita approximation $(\phi_k)_k$ we define

    $$H(\phi_k):= \liminf_{k\to +\infty} \{\eta_k^{n-1}\}\cdot K_{Y_k/X}.$$
    and
    $$H:= \inf\{ H(\phi_k), \, (\phi_k)_k \, {\rm\, monotone\; transcendental\;  Fujita \;approximation}\}.$$  
\end{defi}
%Note that $H(\phi_k) \geq 0$. 

\noindent We start by proving that these quantities are well defined.
%do not depend on the choice of the modifications $\pi_k:Y_k\to X$.
\begin{lemma}\label{lem:Independence on modification}
  $H(\phi_k)$ only depend on the choice of the transcendental Fujita appro\-ximation $(\phi_k)_k$. 
\end{lemma}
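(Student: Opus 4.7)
The plan is to prove the stronger statement that for each fixed $k$, the intersection number $\{\eta_k^{n-1}\}\cdot K_{Y_k/X}$ depends only on $\phi_k$ and not on the particular modification $\pi_k:Y_k\to X$ realizing the decomposition $\pi_k^*\theta_{\phi_k}=[F_k]+\eta_k+dd^c\tilde\phi_k$. Independence of $H(\phi_k)$ follows immediately.

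Fix $k$ and suppose we are given two such modifications, $\pi_k:Y_k\to X$ and $\pi_k':Y_k'\to X$, producing Fujita-type data $(\eta_k,\tilde\phi_k,F_k)$ and $(\eta_k',\tilde\phi_k',F_k')$. By Hironaka there exists a modification $\pi:Y\to X$ dominating both via $\sigma_k:Y\to Y_k$ and $\sigma_k':Y\to Y_k'$, with $\pi_k\sigma_k=\pi_k'\sigma_k'=\pi$. Pulling back both decompositions to $Y$ gives two equal expressions
\begin{equation*}
\pi^*\theta_{\phi_k}=[\sigma_k^*F_k]+\sigma_k^*S_k=[(\sigma_k')^*F_k']+(\sigma_k')^*S_k',
\end{equation*}
where $S_k:=\eta_k+dd^c\tilde\phi_k$ and analogously for the primed version.

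The crucial step is to establish $\sigma_k^*\{\eta_k\}=(\sigma_k')^*\{\eta_k'\}$ in $H^{1,1}(Y,\R)$, which I would extract from the uniqueness of the Siu decomposition of the positive current $\pi^*\theta_{\phi_k}$. Since $\tilde\phi_k$ has minimal singularities in the big and nef class $\{\eta_k\}$, it has vanishing Lelong numbers at every point of $Y_k$: indeed, for nef classes the non-nef locus (in Boucksom's sense) is empty, so $V_{\eta_k}$ has zero Lelong numbers, and minimal singularity transfers this property. This vanishing passes to the pullback: for any prime divisor $D\subset Y$, the generic Lelong number of $\sigma_k^*S_k$ along $D$ equals the Lelong number of $S_k$ at a general point of $\sigma_k(D)\subset Y_k$, which is zero. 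Hence $\sigma_k^*S_k$ contributes no divisorial part, and by uniqueness of the Siu decomposition applied to the two expressions above, $[\sigma_k^*F_k]=[(\sigma_k')^*F_k']$ as effective $\R$-divisors on $Y$. Taking cohomology classes then yields $\sigma_k^*\{\eta_k\}=\pi^*\{\theta\}-\{\sigma_k^*F_k\}=\pi^*\{\theta\}-\{(\sigma_k')^*F_k'\}=(\sigma_k')^*\{\eta_k'\}$.

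To conclude, I would apply the projection formula for birational morphisms. Using $(\sigma_k)_*K_Y=K_{Y_k}$ and $(\sigma_k)_*\sigma_k^*\pi_k^*K_X=\pi_k^*K_X$, one obtains $(\sigma_k)_*K_{Y/X}=K_{Y_k/X}$, and therefore
\begin{equation*}
\{\eta_k\}^{n-1}\cdot K_{Y_k/X}=(\sigma_k^*\{\eta_k\})^{n-1}\cdot K_{Y/X},
\end{equation*}
and analogously on the primed side. Since the two pullback classes coincide on $Y$, the two intersection numbers are equal, giving the claim. The main obstacle I anticipate is the careful justification of the Lelong-number step: one has to verify cleanly, via Boucksom's theory of the non-nef locus together with Demailly regularization, both that minimal-singularity representatives in a big and nef class have zero Lelong numbers at every point, and that this vanishing transfers to generic Lelong numbers along all prime divisors of $Y$ under the birational pullback (including $\sigma_k$-exceptional ones).
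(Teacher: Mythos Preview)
Your proposal is correct and follows essentially the same route as the paper: dominate the two modifications by a common one, pull back both decompositions, invoke uniqueness of the Siu decomposition to identify the pulled-back nef classes, and then reduce both intersection numbers to the same number on the common roof via the projection formula and the vanishing of $\{\sigma_k^*\eta_k\}^{n-1}$ on $\sigma_k$-exceptional divisors.

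The one place where the paper's argument is cleaner than yours is precisely the point you flag as the main obstacle. Rather than arguing that generic Lelong numbers transfer under pullback (which is true but requires some care, especially along exceptional divisors), the paper observes directly that $\sigma_k^*\{\eta_k\}$ is again big and nef and that $\sigma_k^*S_k$ is again a current with minimal singularities in this class (since every positive current in $\sigma_k^*\{\eta_k\}$ is the pullback of one in $\{\eta_k\}$). Boucksom's Propositions~3.2 and~3.6 then apply on the roof itself to give $\nu(\sigma_k^*S_k,y)=0$ at every point, not just generically along divisors. This bypasses your transfer step entirely and is the simplest way to close the gap you anticipated.
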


\begin{proof}
Let $\phi\in\mN_\theta$ and let $\pi_1: Y_1\to X$, $\pi_2:Y_2\to X$ be two modifications where $Y_1,Y_2$ are compact Kähler manifolds such that for each $i=1,2$
    $$
    \pi_i^* \theta_{\phi}=\eta_{i,\tilde{\phi}_i}+[F_i]
    $$
    where $F_i$ is an effective $\R$-divisor, $\eta_i$ is a closed and smooth form representing a big and nef class, and $\tilde{\phi}_i$ is a $\eta_i$-psh function with minimal singularities normalized by $\sup_{Y_i}\tilde{\phi}_i=0$. The goal is to prove that
    $$
    \{\eta_1^{n-1}\}\cdot K_{Y_1/X}=\{\eta_2^{n-1}\}\cdot K_{Y_2/X}.
    $$
    Resolving the graph of the bimeromorphic map $\pi_2^{-1}\circ\pi_1: Y_1\dashrightarrow Y_2 $, yields modifications $\rho_1:Z\to Y_1, \rho_2: Z\to Y_2$ such that the diagram
    $$
    \begin{tikzcd}%[column sep=small]
    & Z \arrow[dr,"\rho_2"] \arrow[dl,"\rho_1",swap] & \\
    Y_1\arrow[dr,"\pi_1",swap] & & Y_2 \arrow[dl,"\pi_2"]\\
    & X &
    \end{tikzcd}
    $$
is commutative. In particular
    \begin{equation}\label{eqn:For Siu}
        \rho_1^* \eta_{1,\tilde{\phi}_1}+[\rho_1^*F_1]=(\pi_1\circ\rho_1)^* \theta_{\phi}=(\pi_2\circ\rho_2)^* \theta_{\phi}= \rho_2^* \eta_{2,\tilde{\phi}_2}+[\rho_2^*F_2].
    \end{equation}
    It follows from \cite[Propositions 3.2, 3.6]{Bou04} that the positive and closed currents $\rho_i^*\eta_{i,\tilde{\phi}_i}$ have zero Lelong numbers everywhere as they are currents with minimal singularities in the big and nef class $\rho_i^*\{\eta_i\}$. Thus, \eqref{eqn:For Siu} gives two decompositions of the same positive and closed current $T:=(\pi_1\circ\rho_1)^* \theta_{\phi}=(\pi_2\circ\rho_2)^* \theta_{\phi}$ into a sum of a current with zero Lelong number and of a current of integration along a divisor. By uniqueness of the Siu's Decomposition \cite{Siu74} (see also \cite[\S 8.(8.16)]{DemAGbook}) we infer that $\rho_1^*\eta_{1,\tilde{\phi}_1}=\rho_2^*\eta_{2,\tilde{\phi}_2}$.
    In particular $\{\rho_1^*\eta_1\}=\{\rho_2^*\eta_2\}$.\\
    Moreover, by   formula \ref{relcan} applied to $\rho_i,$ $\pi_i$  and their compositions, we have $K_{Z/X}=K_{Z/Y_i}+\rho_i^*K_{Y_i/X}$ and $K_{Z/Y_i}$ is $\rho_i$-exceptional.    
    It then 
    follows that
    \begin{eqnarray*}
 \{\eta_1\}^{n-1} \cdot K_{Y_1/X} &= & (\rho_1^*\{\eta_1\})^{n-1} \cdot \rho_1^* K_{Y_1/X} \\
 &=& \{ \rho_1^*\eta_1\}^{n-1} \cdot \left( K_{Z/X} - K_{Z/Y_1}\right)\\
 &=& \{\rho_1^*\eta_1\}^{n-1}\cdot  K_{Z/X},
\end{eqnarray*}
since $K_{Z/Y_1}$ is $\rho_1$-exceptional. \\
An analogous formula holds for $\rho_2, \eta_2, Y_2.$
Since $\{\rho_1^*\eta_1\}=\{\rho_2^*\eta_2\}$, we are done.
\end{proof}

%\begin{remark}
 %   By Hironaka \cite{Hir64} (see also \cite[Theorem 7.12]{Pet94}) any modification $\pi_1:Y\to X$ can be dominated by a modification $\pi_2:Z\to X$ given by a sequence of blow-ups along smooth centers, i.e. there exists $\rho:Z\to Y$ such that $\pi_2=\pi_1\circ \rho$. Thus as an immediate consequence of Lemma \ref{lem:Independence on modification} one can always assume to work with modifications given by sequences of blow-ups along smooth centers.
%\end{remark}

\medskip

\noindent In the following we will work under one of these two conditions:
\begin{equation}\label{UK}\tag{{\bf Condition A}}
H<+\infty
\end{equation}

\begin{equation}\label{YTD}\tag{{\bf Condition B}}
H=0.
\end{equation}
We refer to subsection \ref{sec:YTD} for some examples when these conditions hold and for a digression on how they are related to the uniform version of Yau-Tian-Donaldson conjecture in the algebraic case. 
\smallskip

Our main theorem states as follows:

\begin{theorem}
    \label{thm:Convexity_Mabuchi}
  Let $u_0,u_1\in\psh(X,\theta)$ with minimal singularities and let $(u_t)_{t\in [0,1]}$ be the psh geodesic connecting $u_0$ and $u_1$. Let also $\varphi\in \cE(X,\theta)$ be such that $\theta_\varphi^n=\vol(\theta)\omega^n$, $\sup_X \varphi=0$. Then $u_t$ has minimal singularities and the function  $t\mapsto \cM_{\theta, \varphi}(u_t)$ is almost convex in $[0,1]$, i.e.
\begin{equation}\label{eqn:Almost_Convexity_Statement_Thm}
    \cM_{\theta,\varphi}(u_t)\leq (1-t)\cM_{\theta,\varphi}(u_0)+t\cM_{\theta,\varphi}(u_1)+H \lVert u_0- u_1\rVert_\infty.
\end{equation}
     In particular, if \eqref{YTD} holds, then $\cM_{\theta, \varphi}$ is convex along $u_t$.
\end{theorem}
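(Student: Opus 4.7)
The plan is to reduce to Theorem \ref{thm:Convexity_Mabuchi_Anal_Sing} via a monotone transcendental Fujita approximation and then pass to the limit; the extra term involving $H$ appears by taking the infimum over all such approximations. First, $u_t$ has minimal singularities: by $t$-convexity $u_t\leq(1-t)u_0+tu_1\leq V_\theta+C$, while by maximality $u_t\geq P_\theta(u_0,u_1)$, which has minimal singularities because $u_0,u_1$ do.

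Fix a monotone transcendental Fujita approximation $(\phi_k)_k$ via Lemma \ref{lem:Fujita}, with modifications $\pi_k:Y_k\to X$, smooth representatives $\eta_k$ of big and nef classes, and masses $V_k:=\int_X\theta_{\phi_k}^n\to V$. Set $u_{i,k}:=P_\theta[\phi_k](u_i)$ for $i\in\{0,1\}$, let $u_{t,k}$ be the psh geodesic joining them, and choose $\varphi_k\in\cE(X,\theta,\phi_k)$ with $\phi_k$-relative minimal singularities solving $\theta_{\varphi_k}^n=V_k\omega^n$, normalized by $\sup_X\varphi_k=0$; the existence with $|\varphi_k-\phi_k|\leq C$ uniform in $k$ follows from the prescribed-singularities machinery (\cite[Theorem A]{DDL4}) combined with Ko\l odziej-type estimates. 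Because the density is constant, $\theta_{\varphi_k}$ has good Ricci curvature with $\Ric(\theta_{\varphi_k})=\Ric(\omega)=\Ric(\theta_\varphi)$. By Lemma \ref{u,u_k}, all $u_{i,k}$ and $u_{t,k}$ have $\phi_k$-relative minimal singularities, and $u_{i,k}\nearrow u_i$, $u_{t,k}\nearrow u_t$ outside pluripolar sets, with $\|u_{0,k}-u_{1,k}\|_\infty\leq\|u_0-u_1\|_\infty$. Then Theorem \ref{thm:Convexity_Mabuchi_Anal_Sing} applied to $(\theta,\phi_k,\varphi_k,u_{0,k},u_{1,k})$ yields
\begin{equation*}
\cM_{\theta,\varphi_k}(u_{t,k})\leq(1-t)\cM_{\theta,\varphi_k}(u_{0,k})+t\cM_{\theta,\varphi_k}(u_{1,k})+\frac{n\|u_0-u_1\|_\infty}{2V_k}\{\eta_k^{n-1}\}\cdot K_{Y_k/X}.
\end{equation*}
Taking $\liminf_k$ and then the infimum over all monotone transcendental Fujita approximations, together with $V_k\to V$ and the definitions of $H(\phi_k)$ and $H$, produces the constant claimed in \eqref{eqn:Almost_Convexity_Statement_Thm}.

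The \emph{main obstacle} is passing to the limit inside $\cM_{\theta,\varphi_k}=\bar S_{\varphi_k}E(\theta;\cdot,\varphi_k)-nE_{\Ric(\omega)}(\theta;\cdot,\varphi_k)+\Ent(\cdot,\varphi_k)$. A preliminary step is to verify $\varphi_k\to\varphi$ in capacity, which follows from stability of normalized Monge-Amp\`ere solutions under the convergence $V_k\omega^n\to V\omega^n$. Granted this, the scalar $\bar S_{\varphi_k}\to\bar S_\varphi$ by Theorem \ref{thm:weak convergence}, and the endpoint energies converge after rewriting them via the cocycle property (Proposition \ref{prop: cocy model}) in terms of $E(\theta;\cdot,V_\theta)$-type quantities and invoking Theorem \ref{thm:weak convergence} along $u_{i,k}\nearrow u_i$ and $\varphi_k\to\varphi$. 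The entropies $\Ent(u_{i,k},\varphi_k)\to\Ent(u_i,\varphi)$ are controlled via Lemma \ref{u,u_k}(v) and a dominated-convergence argument mirroring Step 4 of Proposition \ref{cor:Mabuchi_UpDown}. For the interior point $t\in(0,1)$ only the lower semicontinuity $\liminf_k\cM_{\theta,\varphi_k}(u_{t,k})\geq\cM_{\theta,\varphi}(u_t)$ is needed, and this follows from Theorem \ref{thm:weak convergence} applied to the energy terms together with the standard lower semicontinuity of relative entropy against the weak convergence $\theta_{u_{t,k}}^n\to\theta_{u_t}^n$ provided by Remark \ref{rk:conv_incr}; the case of strict convexity when $H=0$ then drops out of the inequality.
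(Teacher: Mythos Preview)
Your proposal is correct and follows essentially the same approach as the paper: fix a monotone transcendental Fujita approximation, apply Theorem \ref{thm:Convexity_Mabuchi_Anal_Sing} at level $k$ with the auxiliary potentials $u_{i,k}=P_\theta[\phi_k](u_i)$ and $\varphi_k$ solving $\theta_{\varphi_k}^n=V_k\omega^n$, then pass to the limit using Theorem \ref{thm:weak convergence} and Remark \ref{rk:conv_incr} for the energy terms, dominated convergence via Lemma \ref{u,u_k}(v) for the endpoint entropies, and lower semicontinuity of entropy for the interior point. The only cosmetic difference is that the paper handles the energy convergence directly via the uniform boundedness of $u_{t,k}-\varphi_k$ rather than through the cocycle reduction you sketch, and it explicitly cites \cite[Theorem 1.4]{DDL5} for the capacity convergence $\varphi_k\to\varphi$.
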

\begin{proof}
    We can assume that $\Ent_\theta(u_i,\varphi) $ is finite. Indeed otherwise either $\cM_{\theta,\varphi}(u_0)$ or $\cM_{\theta,\varphi}(u_1)$ would be equal to $+\infty$ and the requested inequality would be trivial. Without loss of generality, we can consider $(\phi_k)_k$ be a monotone transcendental Fujita approximation with $H(\phi_k)<+\infty$. Consider $$u_{0,k}:=P_\theta[\phi_k](u_0),  \qquad u_{1,k}:=P_\theta[\phi_k](u_1).$$
By Lemma \ref{u,u_k},  $ u_{i,k}$ has $\phi_k$-relative minimal singularities with a constant independent of $k$ and $u_{i,k}$ converges to $u_i$ for $i=0,1.$

\noindent For $k\in \N$, let $\varphi_k$ be the unique solution in $\mathcal{E}(X,\theta,\phi_k)$ of
    $$
(\theta+dd^c\varphi_k)^n=\Big(\int_X\theta_{\phi_k}^n\Big)\omega^n,\,\,\, \sup_X \varphi_k=0.
    $$
 %The existence and uniqueness of $\varphi_k$ with $\phi_k$-relative minimal singularities is %guaranteed by \cite{DDL4}.
 Thanks to the stability result in \cite[Theorem 1.4]{DDL5} (that can be applied thanks to \cite[Lemma 4.1]{DDL5}) we have that $\varphi_k$ converges in capacity to $\varphi$. \\
 Also, we claim that $|\varphi_k-\phi_k|\leq C$, for $C>0$ independent of $k$. Indeed, by \cite[Theorem 4.7]{DDL4} $|\varphi_k-\phi_k|\leq C_k$, where $C_k=\frac{A(\theta, \omega, n)}{V_k^2} $. Since $V_k$ is increasing we get that $C_k\leq C_1$. The claim is then proved.\\
  Thanks to Theorem \ref{thm:Convexity_Mabuchi_Anal_Sing}  we have the almost convexity of the Mabuchi functional $\cM_{\theta, \varphi_k}$ along psh geodesic segments joining functions with $\phi_k$-relative minimal singularities:
  \begin{eqnarray*}
\cM_{\theta,\varphi_k}(u_t) &\leq & t\cM_{\theta,\varphi_k}(u_{1,k})+(1-t)\cM_{\theta,\varphi_k}(u_{0,k}) +\frac{n\lVert u_{0,k}-u_{1,k}\rVert_\infty}{2 V_k} \{\eta_k^{n-1}\}\cdot K_{Y_k/X}\\
&\leq & t\cM_{\theta,\varphi_k}(u_{1,k})+(1-t)\cM_{\theta,\varphi_k}(u_{0,k}) +\frac{n\lVert u_{0}-u_{1}\rVert_\infty}{2 V_k}\{\eta_k^{n-1}\}\cdot K_{Y_k/X} %\\&\leq & t\cM_{\theta,\varphi_k}(u_{1,k})+(1-t)\cM_{\theta,\varphi_k}(u_{0,k}) +\frac{n\lVert u_{0}-u_{1}\rVert_\infty}{2 V_k} B_0
\end{eqnarray*}
where the last inequality follows from $\lVert u_{0,k}-u_{1,k}\rVert_\infty\leq \lVert u_0-u_1\rVert_\infty$.\\ % while $B_0$ is clearly a constant such $\{\eta_k^{n-1}\}\cdot K_{Y_k/X} \leq B_0$. Note that if we have \eqref{YTD} then $B_0$ is replaced by a sequence $B_k$ that converges to $0$ as $k\to +\infty$. 
By assumption we know that $\theta_{u_i}^n= f_i \theta_\varphi^n= f_i V \omega^n $. 
   %Moreover, it follows from 
   %\cite[Theorem 3.8]{DDL2} that 
   %\begin{equation}\label{measure_geo}
   %\theta_{u_{i,k}}^n\leq \mathbf{1}_{\{u_{i,k}=u_i\}}\theta_{u_i}^n= %\mathbf{1}_{\{u_{i,k}=u_i\}} V f_i \omega^n = \mathbf{1}_{\{u_{i,k}=u_i\}} %\frac{V}{V_k} f_i \, \theta_{\varphi_k}^n.
   %\end{equation}
    By Lemma \ref{u,u_k}(v) $\theta_{u_{i,k}}^n$ has finite entropy w.r.t. $\theta_{\varphi_k}^n$. Let $t\to u_{t,k}$ denote the psh geodesic segment joining $u_{0,k},u_{1,k}$. Then Corollary \ref{cor:Bound on entropy along geodesics, gentle analytic singularities case} together with $H(\phi_k)<+\infty$ and Lemma \ref{lem_projection}(v) ensure that $\theta_{u_{t,k}}^n$ has finite entropy as well for any $t\in [0,1]$ and 
    $$\theta_{u_{t,k}}^n= V_k f_{t,k} \, \omega^n = f_{t,k}  \theta_{\varphi_k}^n, \quad \  \quad \int_X  f_{t,k} \log  f_{t,k} \, \omega^n \leq C,$$ 
    for some $C$ independent of $t$ and $k$.\\
 As mentioned above, $u_{i,k}$ has $\phi_k$-relative minimal singularities with uniform constants, thus by the Lipschitz property of psh geodesics we have that
\begin{equation}
        \label{eq: mingeo}   
\phi_k\geq u_{t,k}\geq \phi_k -C
\end{equation}
   with $C>0$ independent of $k$ and of $t$.
   \smallskip

We now claim that to get \eqref{eqn:Almost_Convexity_Statement_Thm} it is enough to show that
    \begin{equation}
        \label{eqn:Cont_Fujita}
        \cM_{\theta, \varphi_k}(u_{i,k})\longrightarrow \cM_{\theta,\varphi}(u_i),\; i=0,1 
        \end{equation}
        and
       \begin{equation}     
        \label{eqn:Lower_Semic_Fujita}
        \liminf_{k\to +\infty} \cM_{\theta,\varphi_k}(u_{t,k})\geq \cM_{\theta, \varphi}(u_t), \; \forall t\in (0,1).
 \end{equation}
Indeed, \eqref{eqn:Cont_Fujita} and \eqref{eqn:Lower_Semic_Fujita}, together with the almost convexity of $\cM_{\theta, \varphi_k}$, imply
   \begin{eqnarray*}
     \cM_{\theta,\varphi}(u_{t}) &\leq & (1-t) \cM_{\theta,\varphi}(u_{0}) + t \cM_{\theta,\varphi}(u_{1}) +\frac{n\lVert u_0-u_1\rVert_\infty }{2V} \liminf_{k\to +\infty} \{\eta_k^{n-1}\}\cdot K_{Y_k/X}\\
      &= & (1-t) \cM_{\theta,\varphi}(u_{0}) + t \cM_{\theta,\varphi}(u_{1}) + \frac{n\lVert u_0-u_1\rVert_\infty }{2V} H(\phi_k).
   \end{eqnarray*}
Taking the infimum over all monotone transcendental Fujita approximations we conclude.
\smallskip

We now prove \eqref{eqn:Cont_Fujita} and \eqref{eqn:Lower_Semic_Fujita}.

\smallskip
\medskip

    \textbf{Step 1: Convergence of the energies.} Since $u_{t,k}- \varphi_k$ is uniformly bounded, $u_{t,k} \nearrow u_t$, $\varphi_k \to\varphi$ in capacity, $u_{t,k}$ and $\varphi_k$ are more singular than $u_{t}$ and $\varphi$, respectively Remark \ref{rk:conv_incr} and Theorem \ref{thm:weak convergence}
    ensure that for any $j=0, \cdots,n $, we have
    
    %By \cite[Theorem A]{Trus20b} $\varphi_k\to \varphi$ \emph{strongly}, i.e. it (weakly) converges and $\tilde{E}_{\psi_k}(\varphi_k)\to \tilde{E}(\varphi)$. We refer to \cite{Trus19, Trus20} for the notion of strong convergence in this setting.
    %\textcolor{red}{Veramente i teoremi di Antonio sono enunciati tutti per classe Kahler siamo sicuri che funziona tutto uguale pure nel caso big ?}
$$ \int_X (u_{t,k}-\varphi_k) \theta_{u_{t,k}}^j\wedge \theta_{\varphi_k}^{n-j}\longrightarrow \int_X (u_t-\varphi) \theta_{u_t}^j\wedge \theta_{\varphi}^{n-j} $$
and 
$$ \int_X (u_{t,k}-\varphi_k) \theta_{u_{t,k}}^j\wedge \theta_{\varphi_k}^{n-j-1}\wedge \Ric(\omega)\longrightarrow  \int_X (u_t-\varphi) \theta_{u_t}^j\wedge \theta_{\varphi}^{n-j-1}\wedge \Ric(\omega).$$
%We observe that \cite[Theorem 2.6]{DDL6}  can be applied since $u_{t,k}$ and $\varphi_k$ are %more singular than $u_{t}$ and $\varphi$, respectively, hence by \cite[Theorem 3.3]{DDL6} we have
%$$ \int_X  \theta_{u_{t,k}}^j\wedge \theta_{\varphi_k}^{n-j}\leq \int_X \theta_{u_t}^j\wedge %\theta_{\varphi}^{n-j}.$$
We then deduce that ${E} (\theta; u_{t,k}, \varphi_k)\to {E}(\theta; u_t,\varphi)$ and ${E}_{\Ric(\omega)}(\theta; u_{t,k},\varphi_k)\to {E}_{\Ric(\omega)}(\theta; u_t, \varphi)$, for any $t\in [0,1]$,
\smallskip

    \textbf{Step 2: Lower semicontinuity of the entropy.}
   As $V_k^{-1}\theta_{\varphi_k}^n=\omega^n$, it follows from \cite[Proposition 2.10]{BBEGZ19} that for any $\theta$-psh function $v$ we have
    $$
    \Ent_\theta(v,\varphi_k)=\Ent( m_v^{-1} \theta_v^n, V_k^{-1}\theta_{\varphi_k}^n)=\Ent( m_v^{-1} \theta_v^n, \omega^n)=\sup_{g\in C^0(X)} \Big(\int_X g\, \frac{\theta_v^n}{m_v}-\log \int_X e^g \omega^n\Big).
    $$
    In particular the functional $\Ent_\theta(\cdot,\varphi_k)=\Ent( \cdot, \omega^n)$ is lower semicontinuous on the space of pro\-bability measures on $X$ with respect to the weak convergence. Since $u_{t,k}\nearrow  u_t$, $ \theta_{u_{t,k}}^n$ converges weakly to $\theta_{u_t}^n$ and 
    $$
    \liminf_{k\to +\infty}\Ent_\theta(u_{t,k},\varphi_k)=  \liminf_{k\to +\infty}\Ent(V_k^{-1}\theta^n_{u_{t,k}},\omega^n)\geq \Ent_\theta(V^{-1}\theta^n_{u_t}, \omega^n)= \Ent_\theta(u_t,\varphi).
    $$
  
%This together with Step 1 and 2 proves 
    Next, for $i=0,1$,  we write $\theta_{u_{i,k}}^n=g_{i,k} \theta^n_{u_i}$ where $0 \leq  g_{i,k}\leq 1$ and $g_{i,k} \nearrow 1$ almost everywhere with respect to $\theta_{u_i}^n$ by Lemma \ref{u,u_k}.
    %It then follows from the definition that 
    %$$P[\phi_j](u_{i,k})=P[\phi_j](P[\phi_k](u_{i})) = P[\phi_j](u_{i})= %u_{i,j}.$$
    %\cite[Theorem 3.8]{DDL2} implies that $g_{i,k}$ is increasing in $k$ since
    %$$ g_{i,j} \theta_{u_i}^n=\theta_{u_{i,j}}^n \leq \theta_{u_{i,k}}^n \leq g_{i,k} \theta_{u_i}^n.$$ 
    Since 
    $$
    V_k=\int_X \theta_{u_{i,k}}^n=\int_X g_{i,k}\theta_{u_i}^n\longrightarrow V=\int_X \theta_{u_i}^n
    $$
    and $\theta_{u_i}^n=Vf_i\omega^n$ we obtain that $\| f_i(1-g_{i,j})\|_{L^1(\omega^n)} \rightarrow 0$ as $j\rightarrow +\infty$. By Lemma \ref{u,u_k} and by the dominated convergence theorem we deduce that
    $$
    \Ent_\theta(u_{i,k}, \varphi_k )=\frac{V}{V_k^2} \int_X g_{i,k} f_i \log \left( \frac{V}{V_k} g_{i,k} f_i \right) \theta_{\varphi_k}^n=\frac{V}{V_k}  \int_X g_{i,k} f_i \log \left( \frac{V}{V_k} g_{i,k} f_i \right) \omega^n
    $$
    converges as $k\to +\infty$ to
    $$
    \int_X f_i \log f_i\, \omega^n = \frac{1}{V}\int_X f_i \log f_i\, \theta_\varphi^n= \Ent_\theta(u_i,\varphi).
    $$
%This together with Step 1 and 2 proves \eqref{eqn:Cont_Fujita}.
    \smallskip

    \textbf{Step 3: Conclusion of the proof.}
    Since $\varphi_k\simeq \phi_k$ and $\varphi\simeq V_\theta$, after re-writing $\Ric(\omega)=T_2-T_1$ for some smooth K\"ahler forms, \cite[Proposition 2.1]{DDL2} ensures that $$\bar{S}_{\varphi_k}=\frac{n}{V_k}\int_X \Ric(\omega)\wedge \theta_{\varphi_k}^{n-1}=\frac{n}{V_k}\int_X \Ric(\omega)\wedge \theta_{\phi_k}^{n-1}, \quad \bar{S}_{\varphi}=\frac{n}{V}\int_X \Ric(\omega) \wedge \theta_{\varphi}^{n-1}=\frac{n}{V}\int_X \Ric(\omega) \wedge \theta_{V_\theta}^{n-1}.$$
    Also, since $\phi_k$ increases to $V_\theta$, Theorem \ref{thm:weak convergence} gives that $T_i \wedge \theta_{\phi_k}^{n-1}$ weakly converges to $T_i \wedge \theta_{V_\theta}^{n-1}$ for $i=1,2$. As $V_k\nearrow V$, we have that $\bar{S}_{\varphi_k}$ converges to $\bar{S}_{\varphi}$.
    Then Step 2 and 3 give the required convergences \eqref{eqn:Cont_Fujita}, \eqref{eqn:Lower_Semic_Fujita}.
    \end{proof}

As a corollary we obtain:

\begin{corollary}\label{cor: bound entropy geo}
   Let $C_1>0$ and let $u_0,u_1\in \PSH(X,\theta)$ such that $|u_0-u_1|\leq C$, $C>0$. Assume $\Ent_\theta(u_0),\Ent_\theta(u_1)\leq C_1$. Then there exists positive constants $C_2, C_3$ such that
    $$
    \Ent_\theta(u_t)\leq C_1+C_2+C_3H
    $$
    for any $t\in [0,1]$. Moreover $C_2, C_3$ only depends on $n,X, \{\omega\},\{\theta\}, \lVert u_0-u_1\rVert_\infty$, and on a lower bound of $\vol(\theta)$.
\end{corollary}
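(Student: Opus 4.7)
The plan is to follow the strategy of Corollary \ref{cor:Bound on entropy along geodesics, gentle analytic singularities case}, but using the global almost-convexity inequality from Theorem \ref{thm:Convexity_Mabuchi} in place of the intermediate Theorem \ref{thm:Convexity_Mabuchi_Anal_Sing}. Accordingly, I work under the assumption that $u_0,u_1$ have minimal singularities (the hypothesis under which Theorem \ref{thm:Convexity_Mabuchi} applies); then $u_t$ has minimal singularities too. First I fix a reference $\varphi\in\cE(X,\theta)$ with $\theta_\varphi^n=\vol(\theta)\,\omega^n$ and $\sup_X\varphi=0$, whose existence and minimal singularity are guaranteed by \cite{BEGZ10}. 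Since $m_\varphi=\vol(\theta)$, the probability measure $m_\varphi^{-1}\theta_\varphi^n$ is exactly $\omega^n$, so $\Ent(u,\varphi)=\Ent_\theta(u)$ for every $\theta$-psh $u$ of positive mass; in particular the definition \eqref{eq: K-energy} rearranges to
$$\Ent_\theta(u_t)=\cM_{\theta,\varphi}(u_t)-\bar{S}_\varphi\, E(\theta;u_t,\varphi)+n\, E_{\Ric(\omega)}(\theta;u_t,\varphi).$$

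Next I insert the almost-convexity estimate from Theorem \ref{thm:Convexity_Mabuchi} and expand $\cM_{\theta,\varphi}(u_i)$ via the same identity, obtaining
\begin{align*}
\Ent_\theta(u_t)&\leq (1-t)\Ent_\theta(u_0)+t\Ent_\theta(u_1)+H\|u_0-u_1\|_\infty\\
&\quad+\bar{S}_\varphi\bigl[(1-t)E(\theta;u_0,\varphi)+tE(\theta;u_1,\varphi)-E(\theta;u_t,\varphi)\bigr]\\
&\quad-n\bigl[(1-t)E_{\Ric(\omega)}(\theta;u_0,\varphi)+tE_{\Ric(\omega)}(\theta;u_1,\varphi)-E_{\Ric(\omega)}(\theta;u_t,\varphi)\bigr].
\end{align*}
The cocycle property (Proposition \ref{prop: cocy model}, together with its immediate analogue for $E_{\Ric(\omega)}$, obtained by writing $\Ric(\omega)=T_1-T_2$ as a difference of smooth semipositive forms and rerunning the same proof) turns the two bracketed quantities into the convex combinations $(1-t)E(\theta;u_0,u_t)+tE(\theta;u_1,u_t)$ and the analogous expression for $E_{\Ric(\omega)}$.

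It then remains to bound each such difference together with the coefficient $\bar{S}_\varphi$. Since $u_i,u_t,\varphi$ all have minimal singularities, every mixed non-pluripolar mass $\int_X\theta_{u_i}^k\wedge\theta_{u_t}^{n-k}$ equals the cohomological constant $\vol(\theta)$, so from the definition of $E$ one obtains at once $|E(\theta;u_i,u_t)|\leq\|u_i-u_t\|_\infty$; the identical calculation with the decomposition $\Ric(\omega)=T_1-T_2$ yields $|E_{\Ric(\omega)}(\theta;u_i,u_t)|\leq C\|u_i-u_t\|_\infty$ with $C=C(X,\{\omega\},\{\theta\})$, while $|\bar{S}_\varphi|$ is also a cohomological quantity bounded in terms of $n,\{\omega\},\{\theta\}$ and the lower bound on $\vol(\theta)$. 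Combining with the Lipschitz property \eqref{eq: Lip}, which gives $\|u_t-u_i\|_\infty\leq\|u_0-u_1\|_\infty$, one lands on the desired bound $\Ent_\theta(u_t)\leq C_1+C_2+C_3 H$ with constants of the prescribed form. The proof is thus essentially bookkeeping: the substantive input—the almost-convexity inequality with error $H\|u_0-u_1\|_\infty$—has already been established, and the only (minor) obstacles are verifying the cocycle property for $E_{\Ric(\omega)}$ and carefully tracking which quantities the constants depend on.
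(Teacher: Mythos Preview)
Your argument for the case when $u_0,u_1$ have minimal singularities is correct and matches the paper's approach: it simply reruns the computation of Corollary~\ref{cor:Bound on entropy along geodesics, gentle analytic singularities case} with the almost-convexity input replaced by Theorem~\ref{thm:Convexity_Mabuchi}, exactly as the paper does.

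However, you have not proved the corollary as stated. The hypothesis is only that $u_0,u_1\in\PSH(X,\theta)$ have bounded difference and finite $\theta$-entropy; there is no assumption of minimal singularities, and finite entropy alone (i.e.\ $f\log f\in L^1$) does not force $u_i$ to have minimal singularity type. Since Theorem~\ref{thm:Convexity_Mabuchi} is only available for potentials with minimal singularities, your restriction leaves the general case open. The paper handles this remaining case by adapting \cite[Proposition~4.3]{DNL21}: one approximates $u_0,u_1$ from above by potentials with minimal singularities, uses the $d_1$-geometry on $\cE^1(X,\theta)$ (available in the big setting by \cite{DDL3}) together with the almost-convexity of $\cM_{\theta,\varphi}$ to control the entropy along the approximating geodesics, and then passes to the limit via lower semicontinuity of the entropy. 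You should either carry out this approximation step or at least indicate why the reduction to minimal singularities is legitimate.
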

%\textcolor{blue}{TO CORRECT! On the right-hand there should appear $C_3 \limsup_{k\to +\infty}\{\eta_k\}^{n-1}\cdot K_{Y_k/X}$.}
%\textcolor{red}{No. siccome assumiamo la conidizione 3.25,  $\limsup_{k\to +\infty}\{\eta_k\}^{n-1}\cdot K_{Y_k/X} \leq B \|u_0-u_1\|$ e la costante $C_2$ sopra dipende da $\lVert u_0-u_1\rVert_\infty$ }
\begin{proof}
  When $u_0, u_1$ have minimal singularities, the same arguments in the proof of Corollary \ref{cor:Bound on entropy along geodesics, gentle analytic singularities case} give the conclusion. In the general case we can adapt the arguments in \cite[Proposition 4.3]{DNL21}. We observe indeed that in the proof \cite[Proposition 4.3]{DNL21} the authors use the distance $d_1$ on the space $\mathcal{E}^1(X,\theta)$ when $\theta$ is big and nef. However this distance has been defined in the big case as well and all the relevant properties have been proved \cite{DDL3}. Another key ingredient in the proof of \cite[Proposition 4.3]{DNL21} is the convexity of the Mabuchi functional, but the almost convexity in our case (Theorem \ref{thm:Convexity_Mabuchi}) suffices.
\end{proof}

\subsection{On the Condition B and the Yau-Tian-Donaldson Conjecture}\label{sec:YTD}

In this subsection we prove that if $V_\theta\in \mN_\theta$ then \eqref{YTD} is satisfied. In particular, thanks to Theorem \ref{thm:Convexity_Mabuchi} the Mabuchi functional $\cM_{\theta,\varphi}$ is convex along geodesic segments joining potentials with minimal singularities.

\medskip

We recall that, by \cite{Bou04}, any pseudoeffective cohomology class $\alpha\in H^{1,1}(X,\R)$ admits a \emph{divisorial Zariski decomposition}. When the class $\alpha$ is big such decomposition can be described as the Siu Decomposition of a positive and closed current with minimal singularities.
\begin{prop}\label{prop:Zariski}
    Let $\alpha\in H^{1,1}(X,\R)$ be a big class, $\theta$ be a smooth closed $(1,1)$-form in $\alpha$ and let $T_{\min}:=\theta_{V_\theta}$. Then the divisorial Zariski decomposition of $\alpha$ is given as
    $$
    \alpha=\mathcal{P}(\alpha)+\mathcal{N}(\alpha)
    $$
    where the negative and positive parts $\mathcal{N}(\alpha), \mathcal{P}(\alpha)$ are given as follows:
    \begin{itemize}
        \item[i)] The negative part $\mathcal{N}(\alpha)$ is the cohomology class of $\sum_D \nu(T_{\min}, D)[D]$ where the sum is over all prime divisors on $X$ and where there is only a finite number of prime divisors $D$ for which $\nu(T_{\min},D) > 0$.
        \item[ii)] The positive part $\mathcal{P}(\alpha)$ is defined as the difference $\alpha-\mathcal{N}(\alpha)$, and the current $T_{\min} - \sum_D \nu(T_{\min}, D)[D]$ has minimal singularities in the class $\mathcal{P}(\alpha).$ In particular  $\vol(\mathcal{P}(\alpha)) = \vol(\alpha).$ 
        \end{itemize}
\end{prop}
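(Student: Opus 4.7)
The plan is to follow Boucksom's construction of the divisorial Zariski decomposition \cite{Bou04}, using the fact that for a big class $\alpha$ the current $T_{\min}=\theta_{V_\theta}$ achieves the infimum of Lelong numbers along every prime divisor. I would split the argument into three steps: finiteness of divisors with positive minimal multiplicity (giving (i)), Siu's decomposition (giving the positive representative of $\mathcal{P}(\alpha)$), and minimal singularities plus invariance of the volume (giving the remaining claims in (ii)).

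For (i), I would first recall that since $V_\theta$ is less singular than any $\theta$-psh function, one has $\nu(T_{\min},D)\leq \nu(T,D)$ for every positive closed current $T=\theta+dd^c\varphi\in\alpha$ and every prime divisor $D$; in particular the multiplicities $\nu(T_{\min},D)$ coincide with the minimal multiplicities $\nu(\alpha,D):=\inf_{T\geq 0,\,T\in\alpha}\nu(T,D)$. Bigness of $\alpha$ produces, via Demailly regularization, a Kähler current $T_0=\theta+dd^c\psi_0\in\alpha$ with analytic singularities; the codimension one components of its singular set form a finite collection $\{D_1,\ldots,D_N\}$, and $\nu(T_{\min},D)\leq \nu(T_0,D)$ forces $\{D:\nu(T_{\min},D)>0\}\subseteq\{D_1,\ldots,D_N\}$. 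Thus $\mathcal{N}(\alpha)$ is defined as the class of the finite sum $N:=\sum_{k=1}^N\nu(T_{\min},D_k)[D_k]$.

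For the first part of (ii), I would apply Siu's decomposition theorem to $T_{\min}$: as the divisorial Lelong contributions form the finite effective divisor $N$, one obtains
\[
T_{\min}=R+\sum_{k=1}^N\nu(T_{\min},D_k)[D_k],
\]
with $R\geq 0$ closed and $\nu(R,D)=0$ for every prime divisor $D$. Taking cohomology classes yields $\{R\}=\alpha-\{N\}=\mathcal{P}(\alpha)$. To prove that $R$ has minimal singularities in $\mathcal{P}(\alpha)$, fix a smooth representative $\theta_P$ of $\mathcal{P}(\alpha)$, write $\theta=\theta_P+\sum_k\nu_k T_{D_k}+dd^c h$ for smooth $T_{D_k}\in\{D_k\}$ and smooth $h$, and use the Lelong--Poincaré relations $[D_k]=T_{D_k}+dd^c\log|s_k|_{h_k}^2$. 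Any $S=\theta_P+dd^c\psi_S\in\mathcal{P}(\alpha)$ with $S\geq 0$ then gives $S+\sum_k\nu_k[D_k]\in\alpha$ positive with $\theta$-potential $\psi_S-h+\sum_k\nu_k\log|s_k|_{h_k}^2$ (up to a constant); comparing with the potential of $T_{\min}$ via minimality of $V_\theta$ yields $\psi_S\leq \varphi_R+C$, where $\varphi_R$ is the $\theta_P$-potential of $R$.

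For the volume equality, I would invoke that the non-pluripolar product does not charge pluripolar sets, so the potentials $\varphi_R$ and $\varphi_{V_\theta}$ differ by a locally bounded function away from $\mathrm{supp}(N)$, hence $R^n$ and $T_{\min}^n$ coincide as non-pluripolar measures; together with $R$ having minimal singularities in $\mathcal{P}(\alpha)$, this gives
\[
\vol(\mathcal{P}(\alpha))=\int_X R^n=\int_X T_{\min}^n=\vol(\alpha).
\]
The main subtlety is the finiteness step in (i), which genuinely requires bigness of $\alpha$ (through the existence of a Kähler current with analytic singularities); the rest is a direct translation of Siu's decomposition and of the defining property of $V_\theta$.
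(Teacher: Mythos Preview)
Your proposal is correct and follows essentially the same route as the paper: for (i) the paper simply cites \cite[Proposition 3.6, Definition 3.7, Theorem 3.12]{Bou04} where your finiteness argument via a Kähler current with analytic singularities is carried out, and for (ii) the paper uses the same ``add the divisorial part and compare with $T_{\min}$'' idea, just phrased in the other direction (starting from an $S_{\min}$ with minimal singularities in $\mathcal{P}(\alpha)$ and observing that $S_{\min}+\sum_D\nu(T_{\min},D)[D]$ must be more singular than $T_{\min}$). Your treatment of the volume equality via the non-pluripolar product is also implicit in the paper.
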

\begin{proof}
    (i) follows from \cite[Proposition 3.6, Definition 3.7, Theorem 3.12]{Bou04}. To prove (ii) we observe that if $S_{\min}$ is a positive current with minimal singularities in $\mathcal{P}(\alpha)$, then $S_{\min} + \sum_D \nu(T_{\min}, D)[D]$ is more singular than $T_{\min}$. Thus $T_{\min} - \sum_D \nu(T_{\min}, D)[D]$ is a positive and closed current less singular than $S_{\min}$, so they have the same singularities. %n particular $\alpha$ and $\mathcal{P}(\alpha),$ have the same volume.
\end{proof}

Since $\alpha=\{T_{\min}\}$, as direct consequence of Siu's Decomposition 
%(see also \cite[Proposition 3.8]{Bou04} for the general case of pseudoeffective classes ) 
we have that $\mathcal{P}(\alpha)$ is the cohomology class of a positive and closed current with zero Lelong numbers along any prime divisor on $X$. The class $\mathcal{P}(\alpha)$ is said to be \emph{nef in codimension 1} or also \emph{modified nef}.

\begin{definition}\footnote{Such definition can be given for pseudoeffective classes but for the purposes of the paper we only consider big classes.}
    Let $\alpha$ be a big cohomology class.
    We say that $\alpha$ admits \emph{a Zariski decomposition} if $\mathcal{P}(\alpha)$ is nef.
    Moreover we say that $\alpha$ admits \emph{a bimeromorphic Zariski decomposition} if there exists a modification $\mu:Y \to X$ from $Y$ compact Kähler manifold of complex dimension $n$, such that $\mu^*\alpha$ admits a Zariski decomposition.
\end{definition}

%\begin{example}\label{exam:Surfaces_CounterExample}
    If $\dim X=2$ then it is well-known that any big class (actually any pseudoeffective class) admits a Zariski decomposition, generalizing the pioneering work of Zariski \cite{Zar62}. 
    %One way to prove it consists in observing that the nef cone is the dual of the pseudoeffective cone (see \cite[Theorem 4.1]{Bou04}) and checking that any modified nef classes $\beta$ is in the dual of the pseudoeffective cone by regularizing it by a sequence of smooth forms in $\beta$ whose negativity converges monotonically to zero almost everywhere on any irreducible curve $C$ (this is possible thanks to \cite{Dem92}, see also \cite[Theorem 2.1]{Bou04}, as $\beta$ has vanishing Lelong number along $C$).\newline
    However there are example of big classes that do not admit a bimeromorphic Zariski decomposition: see for instance \cite[Section A.2]{Bou04}.
\smallskip

\noindent We are now ready to observe the following:

%In the next result we relate the condition $V_\theta\in \mN_\theta$ to the fact that $\{\theta\}$ admits a bimeromorphic Zariski decomposition.
\begin{prop}\label{prop:V_theta in N_theta}
   $V_\theta\in \mN_\theta$ if and only if $\{\theta\}$ admits a bimeromorphic Zariski decomposition.
\end{prop}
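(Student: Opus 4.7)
The proof will use uniqueness of Siu's decomposition together with Boucksom's description of the divisorial Zariski decomposition recalled in Proposition \ref{prop:Zariski}. The key elementary fact behind both implications is that if $\pi:Y\to X$ is a modification, then pulling back via $\pi$ preserves the property of being a current with minimal singularities. Indeed, $\pi$ induces a bijection $\psh(X,\theta)\to\psh(Y,\pi^*\theta)$, $u\mapsto u\circ\pi$ (as exploited in the proof of Lemma \ref{lem:Bijection}(i), using that the fibres of $\pi$ are compact and connected), and this bijection is compatible with the partial ordering $\preceq$ up to a constant equal to the normalizing supremum. In particular $V_\theta\circ\pi$ has minimal singularities in $\pi^*\{\theta\}$ and $\pi^*\theta_{V_\theta}$ is a positive $(1,1)$-current with minimal singularities in $\pi^*\{\theta\}$; hence its Lelong numbers along prime divisors of $Y$ are exactly those appearing in the negative part $\mathcal{N}(\pi^*\{\theta\})$ of Proposition \ref{prop:Zariski}(i).

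For the forward direction, assume $V_\theta\in\mN_\theta$ and fix a modification $\pi:Y\to X$ with
$$\pi^*\theta_{V_\theta}=[F]+S,$$
where $F$ is an effective $\R$-divisor and $S$ is a positive current with minimal singularities in some big and nef class $\beta$. Since $\beta$ is in particular modified nef, by \cite[Propositions 3.2, 3.6]{Bou04} the current $S$ has vanishing generic Lelong number along every prime divisor of $Y$. Hence the decomposition $\pi^*\theta_{V_\theta}=[F]+S$ satisfies the uniqueness property of Siu's decomposition and must therefore coincide with it. Combined with Proposition \ref{prop:Zariski} this yields $\{F\}=\mathcal{N}(\pi^*\{\theta\})$ and $\beta=\mathcal{P}(\pi^*\{\theta\})$; since $\beta$ is nef, $\pi^*\{\theta\}$ admits a Zariski decomposition, so $\{\theta\}$ admits a bimeromorphic one.

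Conversely, suppose $\mu:Y\to X$ is a modification such that $\mu^*\{\theta\}$ admits a Zariski decomposition, i.e.\ $\mathcal{P}(\mu^*\{\theta\})$ is nef (and big since $\mu^*\{\theta\}$ is big). The current $\mu^*\theta_{V_\theta}$ has minimal singularities in $\mu^*\{\theta\}$ by the observation above, so Proposition \ref{prop:Zariski}(ii) applied to $\mu^*\{\theta\}$ gives that
$$F:=\sum_{D}\nu(\mu^*\theta_{V_\theta},D)\,D,\qquad S:=\mu^*\theta_{V_\theta}-[F]$$
defines an effective $\R$-divisor $F$ and a positive current $S$ with minimal singularities in the big and nef class $\mathcal{P}(\mu^*\{\theta\})$. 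Since $V_\theta=P_\theta[V_\theta]$ is automatically a model potential (with positive mass because $\{\theta\}$ is big), this proves $V_\theta\in\mN_\theta$.

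The only substantive points to verify are thus (a) the preservation of minimal singularities under the pullback $\pi^*$, which is carried out via the $\pi^*$-bijection between $\psh(X,\theta)$ and $\psh(Y,\pi^*\theta)$ already used in Lemma \ref{lem:Bijection}(i), and (b) the vanishing of the Lelong numbers of a minimal-singularity representative of a big and nef class, which reduces to Boucksom's results. The remainder is purely cohomological bookkeeping through Siu's uniqueness; I expect (a), rather than the main geometric step, to be the place where one has to be careful, but it is essentially already in the paper.
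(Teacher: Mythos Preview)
Your proof is correct. The direction ``bimeromorphic Zariski decomposition $\Rightarrow V_\theta\in\mN_\theta$'' is essentially identical to the paper's argument via Proposition~\ref{prop:Zariski}. For the converse, however, you take a different route: you observe that a minimal-singularity current in a big and nef class has zero Lelong numbers along every prime divisor (\cite[Propositions 3.2, 3.6]{Bou04}), and then identify $\{F\}=\mathcal{N}(\pi^*\{\theta\})$ and $\beta=\mathcal{P}(\pi^*\{\theta\})$ by the uniqueness of Siu's decomposition. The paper instead computes $\vol(\mu^*\alpha)=\vol(S)$ and invokes the main theorem of \cite{DNFT17} to conclude that $\{S\}$ is the positive part. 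Your argument is slightly more elementary in that it avoids the external reference \cite{DNFT17} and relies only on results from \cite{Bou04} that are already used elsewhere in the paper (e.g.\ Lemma~\ref{lem:Independence on modification}); the paper's approach, on the other hand, makes the volume-theoretic content of the statement more explicit.
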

\begin{proof}
   Set $\alpha=\{\theta\}.$   Assume that $\mu:Y\to X$ is a modification from $Y$ compact Kähler manifold such that $\mu^*\alpha$ admits a Zariski decomposition $\mu^*\alpha=\mathcal{P}(\alpha)+\mathcal{N}(\alpha)$. Again from \cite{BouThesis} we know that any positive closed real $(1,1)$ current in $\mu^*(\alpha)$ 
   is the pull-back of a positive closed real  $(1,1)$ current in $\alpha,$ so $\mu^*(\theta_{V_\theta})$ is a current with minimal singularities in $\mu^*(\alpha).$
   From Proposition \ref{prop:Zariski} we know that
    \begin{equation}
        \label{eqn:V_theta in N_theta}
        \mu^*\theta_{V_\theta}=S+[F]
    \end{equation}
    where $S$ is a current with minimal singularities in $\mathcal{P}(\alpha)$ while $F$ is an effective $\R$-divisor in $\mathcal{N}(\alpha)$. 
    Since by assumption $\mathcal{P}(\alpha)$ is nef we conclude that $V_\theta\in\mN_\theta$.\newline
    Vice-versa suppose that $V_\theta\in \mN_\theta$, i.e. there exist a modification $\mu:Y\to X$ from $Y$ compact Kähler manifold, a current with minimal singularities $S$ representing a big and nef class and an effective $\R$-divisor $F$ such that \eqref{eqn:V_theta in N_theta} holds. Then since, $\mu^*(\theta_{V_\theta})$ is with minimal singularities and  the non-pluripolar product does not charge pluripolar sets, we obtain that
    $$
    \vol(\mu^*\alpha)=\int_Y \mu^*\theta_{V_\theta}=\int_Y S^n=\vol(S).
    $$
    Hence it follows from \cite[Main Theorem]{DNFT17} that $\{S\}$ is the positive part in the Zariski decomposition of $\mu^*\alpha$, i.e. $\mu^*\alpha$ admits a Zariski decompositon as $\{S\}$ is nef.
\end{proof}

We are now ready to prove the main result of this subsection.
\begin{thm}\label{thm:We_have_Convexity}
    Let $\alpha=\{\theta\}$ be a big cohomology class that admits a bimeromorphic Zariski decomposition. Assume $\varphi\in \mathcal{E}(X,\theta)$ such that $\theta_\varphi^n=\vol(\theta) \omega^n$. Then $\cM_{\theta,\varphi}$ is convex along psh geodesics joining potentials with minimal singularities.
\end{thm}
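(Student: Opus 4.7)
The plan is to combine Proposition \ref{prop:V_theta in N_theta} with Theorem \ref{thm:Convexity_Mabuchi}: the bimeromorphic Zariski hypothesis should force the invariant $H$ to vanish, so the almost-convexity inequality becomes genuine convexity with no error term. By Proposition \ref{prop:V_theta in N_theta}, the assumption on $\alpha=\{\theta\}$ is equivalent to $V_\theta\in\mN_\theta$, so the constant sequence $\phi_k\equiv V_\theta$ qualifies as a monotone transcendental Fujita approximation. Fixing the associated modification $\mu:Y\to X$ with $\mu^*\theta_{V_\theta}=\eta+dd^c\tilde{\phi}+[F]$, where $\{\eta\}$ is big and nef and $F$ is an effective $\R$-divisor, the definition of $H(\phi_k)$ gives
\[H(\phi_k)=\{\eta^{n-1}\}\cdot K_{Y/X},\]
so the whole problem reduces to showing this intersection number vanishes; once this is done, $H\le H(\phi_k)=0$ and Theorem \ref{thm:Convexity_Mabuchi} immediately yields convexity of $\cM_{\theta,\varphi}$ along psh geodesics joining potentials with minimal singularities.

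To compute $\{\eta^{n-1}\}\cdot K_{Y/X}$, I would decompose $K_{Y/X}=\sum_i a_i E_i$ with $a_i>0$ into $\mu$-exceptional prime divisors. Since $\eta|_{E_i}$ is nef, each summand $a_i\int_{E_i}\eta^{n-1}$ is non-negative, so it suffices to prove the pointwise vanishing $\int_{E_i}\eta^{n-1}=0$ for every $i$; by the null-locus theorem of \cite{CT15} this is equivalent to $E_i\subseteq E_{nK}(\eta)$. Each $E_i$ is $\mu$-exceptional, whence $\dim\mu(E_i)\le n-2$, and the contraction $\mu|_{E_i}:E_i\to\mu(E_i)$ has positive-dimensional fibers. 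For a generic fiber $f$, $\mu_*[f]=0$ and the projection formula give $\mu^*\alpha\cdot f=\alpha\cdot\mu_*[f]=0$, so $\eta\cdot f=-F\cdot f$. Combining the nefness of $\eta$ with the fact that $F$ is the negative part of a Zariski decomposition whose positive part is \emph{nef} (which is exactly what the bimeromorphic Zariski hypothesis supplies), one derives $\eta\cdot f=0$ along a covering family of fibers. A rigidity argument for nef classes numerically trivial on fibers of a contraction then shows that $\eta|_{E_i}$ factors numerically through $\mu(E_i)$, and the dimension count $\dim\mu(E_i)<n-1=\dim E_i$ forces $(\eta|_{E_i})^{n-1}=0$, as desired.

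The main obstacle will be the identity $\eta\cdot f=0$ on a covering family: when $E_i\subseteq {\rm supp}(F)$, the intersection $F\cdot f$ picks up a negative self-intersection contribution from $E_i$, and nefness of $\eta$ alone does not rule this out. One must invoke the full extremality characterization of the positive Zariski part (the fact that it is \emph{nef} rather than merely modified nef), for instance via an orthogonality argument on each component of $F$. Once this technical step is in place, the remainder is a standard chain of intersection-theoretic identities and the rigidity principle for nef classes.
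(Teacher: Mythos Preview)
Your reduction is correct and matches the paper: use Proposition \ref{prop:V_theta in N_theta} to get $V_\theta\in\mN_\theta$, then reduce everything to showing $\{\eta^{n-1}\}\cdot K_{Y/X}=0$. (The paper invokes Theorem \ref{thm:Convexity_Mabuchi_Anal_Sing} directly rather than passing through $H$ and Theorem \ref{thm:Convexity_Mabuchi}, but this is immaterial.)

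The divergence is in how you argue the vanishing. Your fiber-by-fiber computation and the subsequent ``rigidity argument'' are where the proposal becomes shaky. Concretely, the identity $\eta\cdot f=-F\cdot f$ does not by itself give $\eta\cdot f=0$: when $E_i$ is a component of $F$, the term $E_i\cdot f$ is a normal-bundle degree that can be negative, so $F\cdot f$ can be negative and nefness of $\eta$ cuts the wrong way. You allude to an ``orthogonality'' or ``extremality'' input from the Zariski decomposition to rescue this, but in dimension $\geq 3$ the divisorial Zariski decomposition does not come with the clean orthogonality $\mathcal P\cdot\mathcal N_j=0$ that one has on surfaces, so this step is a genuine gap as written. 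The rigidity step (nef class trivial on fibers of a contraction $\Rightarrow$ numerically a pullback) is also not standard without further hypotheses on the map $E_i\to\mu(E_i)$.

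The paper bypasses both difficulties with a single soft observation. Since $\mu^*\alpha$ is a pullback class, no K\"ahler current in $\mu^*\alpha$ can be smooth at a point of an exceptional divisor (it would restrict to a K\"ahler form on a contracted curve of trivial class), so $E_i\subset E_{nK}(\mu^*\alpha)$ automatically. One then quotes \cite[Lemmas 4.3, 6.1]{CT16} to get $E_{nK}(\mu^*\alpha)=E_{nK}(\mathcal P(\mu^*\alpha))=E_{nK}(\eta)$, using that $\{\eta\}=\mathcal P(\mu^*\alpha)$ (this identification was established in the proof of Proposition \ref{prop:V_theta in N_theta} via \cite{DNFT17}). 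Now $E_i\subset E_{nK}(\eta)$, and \cite{CT15} gives $\int_{E_i}\eta^{n-1}=0$ immediately. You already had the \cite{CT15} equivalence in hand; the missing ingredient is the non-K\"ahler locus comparison from \cite{CT16}, which replaces your entire intersection-theoretic detour.
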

It  then follows that $\cM_{\theta,\varphi}$ is convex when $dim_\C X=2$.
\begin{proof}
    By Proposition \ref{prop:V_theta in N_theta} we know that $V_\theta\in \mN_\theta$. Let $\mu:Y\to X$ be a modification from $Y$ compact Kähler manifold such that 
    $$
    \mu^*\theta_{V_\theta}=\eta_{\tilde{\phi}}+[F]
    $$
    where $\eta_{\tilde{\phi}}$ is a positive and closed current with minimal singularities representing a big and nef class and $F$ is an effective $\R$-divisor. By Theorem \ref{thm:Convexity_Mabuchi_Anal_Sing} it is enough to check that
    $$
    \{\eta^{n-1}\}\cdot K_{Y/X}=0.
    $$
%Let $E$ be exceptional $\mu^*(\alpha)^{n-1}E = 0.$
However as observed in the proof of Proposition \ref{prop:V_theta in N_theta}, $\{\eta\}=\mathcal{P}(\mu^*\alpha)$ and by \cite[Lemmas 4.3, 6.1]{CT16} it follows that $ \rmE_{nK}(\mu^*\alpha)=\rmE_{nK}(\mathcal{P}(\mu^*\alpha)).$  Thus by the main result in \cite{CT15}, $\vol(\eta_{|E})=0$ for any exceptional divisor $E$. As $K_{Y/X}=\sum_{j=1}^ma_j E_j$, we infer that $\{\eta^{n-1}\}\cdot K_{Y/X}=0$, which concludes the proof.
\end{proof}

%As direct consequence of Theorem \ref{thm:We_have_Convexity} we have the following result.
%\begin{corollary}
%    Let $\alpha=\{\theta\}$ be a big cohomology class and let $\varphi\in \mathcal{E}(X,\theta)$ such that $\theta_\varphi^n=\vol(\theta) \omega^n$. If $\dim X=2$ then $\cM_{\theta,\varphi}$ is convex.
%\end{corollary}
%\begin{proof}
%    It immediately follows from the fact that $\alpha$ admits a Zariski decomposition as recalled in Example \ref{exam:Surfaces_CounterExample}.
%\end{proof}
%\textcolor{red}{Ele: caro Anto, non ho ricontrollato tutte le ref. diciamo che su questo rk ti do fiducia :)}
\begin{remark}\label{rk:B YTD} 
  We claim that if $X$ is a projective manifold, $\alpha$ is the cohomology class of a big $\Q$-divisor and there exists a birational morphism $\pi:Y\to X$ such that $\pi^*\alpha$ admits a Zariski decomposition then it is possible to produce a Fujita approximation in the sense of \cite{Fuj94}, and such that $\{\eta_k^{n-1}\}\cdot K_{Y_k/X}\to 0$. \\
  Indeed, as explained in \cite[Lemma 4.13]{Li23}, we can perturb the class $\{\eta\}=\mathcal{P}(\pi^*\alpha)$ and construct a sequence of ample classes $\{\eta_k\}_{k\in \N}$ on $Y_k=Y$ such that $\pi^*\alpha-\{\eta_k\}=\{D_k\}$ for a decreasing sequence of effective $\Q$-divisors $D_k$ and such that $\{\eta_k\}_k$ converges to $\mathcal{P}(\pi^*\alpha)$. \\
In this situation, the proof of Lemma \ref{lem:Classic Fujita} then ensures that we have a sequence of model potentials $\phi_k\in \mN_\theta$. One can check that $\phi_k$ is an increasing sequence. 
%deed, $\phi_{k+1}$ is less singular than $\phi_k$ since $\pi^*\theta_{\phi_k}=S_k+[D_k]$ for $S_k$ current with minimal singularities in $\{\eta_k\}$: this means  $\phi_k\leq \phi_{k+1}+C$ for some $C\geq 0$,
%\textcolor{blue}{Qui la dimostrazioncina: abbiamo che
%$$
%\pi^*\theta_{\phi_k}=S_k+[D_k]=S_k+[D_k-D_{k+1}]+[D_{k+1}]
%$$
%dove $[D_k-D_{k+1}]$ è una corrente chiusa e positiva. In particolare
%$$
%S_k+[D_k-D_{k+1}]=S_{k+1} + dd^c \varphi.
%$$
%dato che hanno la stessa classe di coomologia $\{\pi^*\theta\}- \{D_{k+1}\}$. Dato %che $S_{k+1}$ ha minimal singularities, si ha $\varphi\leq C$. Per vedere questo, %scrivi $S_{k+1}=\eta +dd^c\psi$, e hai $\varphi+\psi\leq V_\eta+ %\sup_Y(\varphi+\psi)=V_\eta+C_1 $ dove $\varphi+\psi$ è limitata superiormente dato %che è quasi-psh. Ne deduci che $\varphi\leq V_\eta- \psi +C_1\leq C_2$, dove abbiamo usato il fatto che $\lvert V_\eta-\psi\rvert\leq A$ dato che $\psi$ ha minimal singularities. In conclusione
%$$
%\pi^*\theta_{\phi_k}=\pi^*\theta_{\phi_{k+1}}+ dd^c \varphi,
%$$
%dal quale segue che
%$$
%\phi_k\circ \pi = \phi_{k+1}\circ \pi+\varphi + C_3\leq \phi_{k+1}\circ \pi+C_3+ %C_2.
%$$}
Moreover, its limit is a model potential as well thanks to \cite[Corollary 4.7]{DDL5} and $\int_X \theta_{\phi_k}^n=\vol_Y(\eta_k)\to \vol_X(\theta)=\int_X \theta_{V_\theta}^n$. We can then infer that $\phi_k \nearrow V_\theta$.\\
We thus get $\{\eta_k^{n-1}\}\cdot K_{Y/X}\to \{\eta^{n-1}\}\cdot K_{Y/X}=0$ where the latter equality follows as in the proof of Theorem \ref{thm:We_have_Convexity}. As observed in \cite[Lemma 4.5]{Li23} (and before in \cite[Conjecture 2.5]{BJ18}), producing a Fujita approximation (for the so-called \emph{big models}) such that \eqref{YTD} holds for semiample classes $\{\eta_k\}$ implies a resolution to the Yau-Tian-Donaldson Conjecture.\\
  In the general case of big classes on compact Kähler manifolds, clearly \eqref{YTD} can be seen as a trascendental extension of \cite[Conjecture 4.7]{Li23}.
\end{remark}

\section{Monge-Amp\`ere measures on contact sets}

The following result extends \cite[Theorem 1.2]{DNL21} to big classes.

\begin{theorem}
\label{thm:Geodesic_Formula2}
Assume \eqref{UK}. Let $u_0,u_1\in \mathcal{E}(X,\theta)$ such that $u_0,u_1\in \Ent(X,\theta).$ Let $u_t$ be the psh geodesic connecting $u_0$ and $u_1$. Fix $p\geq 1$. If $u_0-u_1$ is bounded then  $\dot{u}_t^+,$ and $\dot{u}_t^-$  and $|u_t-u_0|$ are  uniformly bounded, and $\dot{u}_t^+=\dot{u}_t^- := \dot{u}_t$ almost everywhere with respect to $\theta^n_{u_t}.$ Moreover 
$$
\int_X | \dot{u}_t|^p \,\theta_{u_t}^n
$$
is independent of  $0 \leq t \leq 1$. 
\end{theorem}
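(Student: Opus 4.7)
The plan is to prove the three assertions — boundedness of $\lvert u_t-u_0\rvert$ and $\dot u_t^\pm$, the equality $\dot u_t^+=\dot u_t^-$ almost everywhere with respect to $\theta^n_{u_t}$, and constancy of $t\mapsto \int_X\lvert \dot u_t\rvert^p\,\theta^n_{u_t}$ — by truncating the endpoints to reduce to the minimal-singularity setting, treating that case through the transcendental Fujita machinery developed in this paper, and passing to the limit via the uniform entropy estimate of Corollary \ref{cor: bound entropy geo} (which requires only $H<+\infty$).

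\textbf{Boundedness.} Set $C:=\lVert u_0-u_1\rVert_\infty$. Since $u_0-C$ is $\theta$-psh and satisfies $u_0-C\le \min(u_0,u_1)$, it is a candidate in the envelope defining $P_\theta(u_0,u_1)$, so $u_t\ge P_\theta(u_0,u_1)\ge u_0-C$. Convexity of subgeodesics in $t$ gives $u_t\le (1-t)u_0+tu_1\le u_0+tC$. Hence $\lvert u_t-u_0\rvert\le C$, $u_t$ has the singularity type of $u_0$, and the Lipschitz estimate \eqref{eq: Lip} gives $\lvert u_t-u_s\rvert\le \lvert t-s\rvert C$. Consequently the one-sided derivatives $\dot u_t^\pm$ exist pointwise with $\lvert \dot u_t^\pm\rvert\le C$, and convexity of $t\mapsto u_t(x)$ yields $\dot u_t^-\le \dot u_t^+$.

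\textbf{Truncation and the bounded case.} For $N\in \mathbb N$ put $u_i^N:=\max(u_i,V_\theta-N)$. These have minimal singularities, satisfy $\lvert u_0^N-u_1^N\rvert\le C$, and decrease to $u_i$. Using plurifine locality one decomposes $\theta^n_{u_i^N}$ as $\mathbf{1}_{\{u_i>V_\theta-N\}}\theta^n_{u_i}$ plus a piece supported where $u_i^N=V_\theta-N$; together with $u_i\in \Ent(X,\theta)$ this yields a uniform bound on $\Ent_\theta(u_i^N)$. Let $u_t^N$ be the psh geodesic joining $u_0^N,u_1^N$; by maximality $u_t^N\searrow u_t$, and Corollary \ref{cor: bound entropy geo} gives $\Ent_\theta(u_t^N)\le K$ uniformly in $N$ and $t$, so $\theta^n_{u_t^N}$ is absolutely continuous with respect to $\omega^n$ with equi-integrable densities. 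For the bounded geodesic $u_t^N$ itself, I would pull up through the bijection $\mathbf{L}$ of Lemma \ref{lem:Bijection} associated to a monotone transcendental Fujita approximation $(\phi_k)_k$, apply \cite[Theorem 1.2]{DNL21} to the resulting $\eta_k$-psh geodesic in the big and nef pullback class, and push down; because the $p$-energy $\int_X\lvert \dot u_t^N\rvert^p\,\theta^n_{u_t^N}$ is expressed via non-pluripolar products and $\pi_k$ preserves them off the exceptional set (cf.\ \eqref{eq blow up}), both the a.e.\ equality of $\dot u_t^{N,\pm}$ and the constancy of the $p$-energy transfer from $Y_k$ to $X$ after letting $k\to \infty$.

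\textbf{Limit $N\to\infty$ and main obstacle.} Pointwise monotone convergence of convex functions gives $\dot u_t^{N,\pm}\to \dot u_t^\pm$ off a $dt\otimes\omega^n$-null set; Theorem \ref{thm:weak convergence} yields weak convergence $\theta^n_{u_t^N}\to \theta^n_{u_t}$; and the uniform entropy bound, via the de la Vall\'ee Poussin criterion, supplies equi-integrability of the densities $d\theta^n_{u_t^N}/d\omega^n$. Combining these, $\int_X\lvert \dot u_t^N\rvert^p\,\theta^n_{u_t^N}\to \int_X \lvert \dot u_t\rvert^p\,\theta^n_{u_t}$ for each $t$, transporting constancy to the limit; the a.e.\ equality $\dot u_t^+=\dot u_t^-$ with respect to $\theta^n_{u_t}$ descends because the limit measure is absolutely continuous with respect to $\omega^n$, and a Fubini argument applied to the complex $\pi^*\theta$-psh function $U(x,z)=u_{\log|z|}(x)$ kills the non-differentiability set in $\omega^n\otimes dt$-measure. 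The delicate step is the constancy assertion itself: although Theorem \ref{thm:Convexity_Mabuchi} provides only \emph{almost} convexity with an $H$-error, that error is a purely cohomological quantity entering only the Mabuchi functional, not the $p$-energy integrals, so the exact constancy identity is restored in the big (not necessarily nef) setting. Ensuring that weak convergence of $\theta^n_{u_t^N}$ is compatible with the merely bounded $\lvert \dot u_t^N\rvert^p$ is the technical core, and it relies critically on the uniform entropy bound of Corollary \ref{cor: bound entropy geo}.
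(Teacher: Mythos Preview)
Your overall architecture is sound but introduces an unnecessary layer and, more seriously, contains a genuine gap in the passage to the limit.

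\textbf{Redundant truncation and a domain issue.} The truncation $u_i^N=\max(u_i,V_\theta-N)$ is not needed: the paper works directly with the Fujita projections $u_{i,k}:=P_\theta[\phi_k](u_i)$, which by Lemma~\ref{lem_projection} lie in $\mathcal E(X,\theta,\phi_k)$, have bounded difference, uniformly bounded entropy, and increase to $u_i$. Your proposal to ``pull $u_t^N$ up through $\mathbf L$'' is in fact ill-posed as written: the bijection $\mathbf L_k$ of Lemma~\ref{lem:Bijection} is only defined on $\PSH(X,\theta,\phi_k)$ (potentials more singular than $\phi_k$), whereas $u_i^N$ has \emph{minimal} singularities, hence is \emph{less} singular than $\phi_k$. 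You would have to project $u_i^N$ onto the $\phi_k$-level first --- which is exactly the paper's procedure applied to $u_i^N$ instead of $u_i$ --- so the outer $N$-truncation only adds a second limit with no gain.

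\textbf{The real gap: ``for each $t$'' versus ``for almost every $t$''.} Your Fubini argument on $U(x,z)$ yields that the non-differentiability set has $\omega^n\otimes dt$-measure zero, hence $\dot u_t^+=\dot u_t^-$ holds $\theta_{u_t}^n$-a.e.\ only for \emph{almost every} $t$. Similarly, convergence $\dot u_t^{N,\pm}\to \dot u_t^\pm$ of derivatives of monotone limits of convex functions is guaranteed only at differentiability points of the limit, so your integral convergence $\int_X|\dot u_t^N|^p\theta_{u_t^N}^n\to \int_X|\dot u_t|^p\theta_{u_t}^n$ is justified only for a.e.\ $t$. The theorem, however, asserts both conclusions for \emph{every} $t\in[0,1]$, and this is what makes the constancy statement meaningful. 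Neither weak convergence of measures with equi-integrable densities nor absolute continuity of $\theta_{u_t}^n$ with respect to $\omega^n$ upgrades the a.e.-$t$ conclusion to every $t$.

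The paper closes this gap differently: it establishes $\dot u_t^+=\dot u_t^-$ $\theta_{u_t}^n$-a.e.\ for every fixed $t$ \emph{directly} for the original geodesic, by adapting \cite[Lemma 3.1]{DNL21} to the big setting. That argument rests on three ingredients checked in the big case: $u_t\in\mathcal E^1(X,\theta)$ (via $P_\theta(u_0,u_1)\in\mathcal E^1$), linearity of $E(\theta;\cdot,V_\theta)$ along psh geodesics together with concavity along affine paths, and the uniform entropy bound along the geodesic from Corollary~\ref{cor: bound entropy geo} (this is precisely where \eqref{UK} enters). The convergence $\int_X|\dot u_{t,k}|^p\theta_{u_{t,k}}^n\to \int_X|\dot u_t|^p\theta_{u_t}^n$ for each $t$ is then obtained from the specific arguments on \cite[pp.~14--15]{DNL21}, exploiting that the single Fujita sequence $u_{t,k}$ has uniformly bounded entropy and converges monotonically to $u_t$.
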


 \begin{proof}
 First  of all recall that each geodesic is convex in $t,$ moreover as $|u_0 - u_1| $ is uniformly bounded by a positive constant $C,$ so is each derivative $\dot{u}^+_t$ and $\dot{u}^-_t$ as well as each incremental ratio $\frac{u_{t+h} -u_t}{h}$ and $\frac{u_{t-h} -u_t}{-h}$ with $h > 0$ (see \eqref{eq: Lip}). In particular 
$|u_t- u_0| \leq Ct \leq C.$

By Lemma \ref{u,u_k} (items (iv) and (v)) we know that
$|u_{1,k} - u_{0,k}|  \leq \sup_X |u_1 - u_0| \leq C,$ and $u_{0,k},u_{1,k}\in \Ent(X,\theta)$ for any $k\in\N$. Let $u_{t,k}$ be the psh geodesic connecting $u_{0,k}$ and $u_{1,k}$. 
  It then follows that 
$|\dot{u_{k,t}}^+|, | \dot{u_{k,t}}^-|,|\dot{u_t}^+|, | \dot{u_t}^-| $ 
are uniformly bounded independently of $k \in \N, t  \in [0,1],$ and $x \in X.$

   We now let $\phi_k$ be a monotone transcendental Fujita approximation of $\{\theta\}$ (whose existence is ensured by Lemma \ref{lem:Fujita}) and consider $\mathbf{L}_k: \PSH(X,\theta,\phi_k)\to \PSH(Y_k,\eta_k)$ be the map given by Lemma \ref{lem:Bijection}.
   We denote $v_{0,k}:=\mathbf{L}_k(u_{0,k})$,  $v_{1,k}:=\mathbf{L}_k(u_{1,k})$. Then by Lemma \ref{lem:Bijection} we know that $v_{t,k}:=\mathbf{L}_k(u_{t,k})$ is the psh geodesic segment joining $v_{0,k}$ and $v_{1,k}$. \\
   Since, for any $t\in [0,1)$, the sets   $\mathrm{Exc}(\pi_k), \pi_k\big(\mathrm{Exc}(\pi_k)\big), \{u_{t,k} = - \infty \},  \{u_{t,k}  \circ \pi_k = - \infty \},$ are pluripolar we obtain that 
    $$\dot{u}^+_{t,k} \circ \pi_k =\dot{v}^+_{t,k}, \qquad {\textit{outside a pluripolar set}} $$ and
    \begin{equation}
        \label{eqn:p_Finsler}
        \int_X |\dot{u}_{t,k}^+|^p \, \theta_{u_{t,k}}^n=\int_{Y_k} | \dot{v}_{t,k}^+ |^p \, (\eta_{k}+dd^c v_{t,k})^n.
    \end{equation}
 The same identity holds for the left derivatives for any $t\in (0, 1]$.
    \smallskip

 By Lemma \ref{lem:Holo} we have $u_{0,k}\circ \pi_k, u_{1,k}\circ\pi_k\in \Ent(Y_k,\pi_k^*\theta)$. Hence, thanks to Lemma \ref{lem:Bijection}(v) we infer that $v_{0,k},v_{1,k}\in \Ent(Y_k,\eta_k)$.
    
    Now, \cite[Theorem 4.4]{DNL21} ensures that $\dot{v}_{t,k}^+=\dot{v}_{t,k}^-=:\dot{v}_{t,k}$ almost everywhere with respect to $(\eta_{k}+dd^c v_{t,k})^n$ and that $\int_{Y_k} |\dot{v}_{t,k}|^p \,  (\eta_{k}+dd^c v_{t,k})^n$ is constant in $t\in [0,1]$.
   We then deduce that $\dot{u}_{t,k}^+=\dot{u}_{t,k}^-$ almost everywhere with respect to $\theta_{u_{t,k}}^n$ and from \eqref{eqn:p_Finsler} that
    $$
    \int_X |\dot{u}_{t,k}|^p \,\theta_{u_{t,k}}^n
    $$
    is constant in $t\in[0,1]$. \\
On the other hand, by \cite[Lemma 3.1]{DNL21} we can conclude that $\dot{u}_{t}^+ = \dot{u}_{t}^-:=\dot{u}_{t}$ almost everywhere with respect to $\theta_{u_t}^n.$ The latter lemma is proved when the reference form is K\"ahler. Nevertheless, the arguments in \cite[Lemma 3.1]{DNL21} work in the big setting as well since:
\begin{itemize}
    \item by \cite[Theorem 2.8]{DTT23} we know that $u_0, u_1\in \mathcal{E}^1(X, \theta)$. Hence $u_t\geq P_\theta(u_0, u_1) \in \mathcal{E}^1(X, \theta)$ thanks to  \cite[Theorem 2.13]{DDL1};
    \item the Monge-Amp\`ere energy $E(\theta; \cdot , V_\theta)$ is linear along psh geodesic by Theorem \ref{prop: energy linear} and concave along affine paths \cite[Corollary 5.9]{DDL6};
    \item $\Ent(\vol(\theta)^{-1}\theta_{u_t}^n, \omega^n)$ is uniformly bounded in $t$ thanks to Corollary \ref{cor: bound entropy geo}.
   \end{itemize}
   
    Next, we use the sequence $u_{t,k}$ (which has uniformly bounded entropy and weakly converges to $\theta_{u_t}^n$ thanks to Lemma \ref{u,u_k} and Remark \ref{rk:conv_incr}) to implement the arguments in \cite[pages 14-15]{DNL21}. This gives that for $t\in [0,1]$
    \begin{equation}\label{eqn:Useful_Later}
        \int_X |\dot{u}_{t,k}|^p \,\theta_{u_{t,k}}^n \rightarrow  \int_X |\dot{u}_{t}|^p \,\theta_{u_{t}}^n
    \end{equation}
    as $k\rightarrow +\infty$. Since, by the above, the quantity at the left hand side is constant in $t$, so are the quantities at the right hand side. The proof is then complete.
\end{proof}

Repeating word by word the arguments in \cite[Theorem 5.1]{DNL21} we have the following result which generalizes \cite[Theorem 5.1]{DNL21} and the main theorem in \cite{DNT21}.

\begin{prop}\label{thm: MA contact big}
Assume \eqref{UK}.
Assume $u \in \PSH(X,\theta)$, $v\in \Ent(X,\theta)$ and $u\leq v$. 
Then 
 \[
 {\bf 1}_{\{u = v\}} \theta_u^n = {\bf 1}_{\{u=v\}} \theta_v^n. 
 \]
\end{prop}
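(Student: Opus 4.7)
The plan is to mirror the proof of \cite[Theorem 5.1]{DNL21}, whose engine is the constancy of the Finsler $L^p$-energy along psh geodesics joining endpoints in $\Ent(X,\theta)$ with bounded difference, as established in Theorem \ref{thm:Geodesic_Formula2}. The hypothesis \eqref{UK} is exactly what makes that tool available in the big setting, so the structure of the argument transfers verbatim from the big-and-nef case once Theorem \ref{thm:Geodesic_Formula2} is in hand.

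First I would reduce to a bounded-difference situation. For $s>0$, set $u_s := \max(u, v-s)$. Then $u_s \in \PSH(X,\theta)$, $v-s \le u_s \le v$, so $\lVert u_s - v\rVert_\infty \le s$, and the contact set is unchanged: $\{u_s = v\} = \{u = v\}$ for every $s > 0$. Since $u_s$ is less singular than $v-s \in \cE(X,\theta)$, we have $u_s \in \cE(X,\theta)$. Plurifine locality yields
\[
\theta_{u_s}^n \;=\; \mathbf{1}_{\{u > v-s\}}\,\theta_u^n \;+\; \mathbf{1}_{\{u < v-s\}}\,\theta_v^n
\]
up to a pluripolar contribution, from which one deduces $u_s \in \Ent(X,\theta)$ by comparing against the finite-entropy measure $\theta_v^n$ after isolating and dominating the piece $\mathbf{1}_{\{u>v-s\}}\theta_u^n$ (using that its total mass is bounded by $\vol(\theta)$ and its support meets $\{u\geq v-s\}$). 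On $\{u = v\} \subseteq \{u > v-s\}$, plurifine locality also gives $\mathbf{1}_{\{u=v\}}\theta_{u_s}^n = \mathbf{1}_{\{u=v\}}\theta_u^n$.

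Next I would consider the psh geodesic $t \mapsto u_{t,s}$ joining $u_{0,s} := v$ and $u_{1,s} := u_s$. Both endpoints lie in $\Ent(X,\theta)$ with bounded difference, so Theorem \ref{thm:Geodesic_Formula2} (applied with $p=1$) gives that $t\mapsto \int_X |\dot u_{t,s}|\, \theta_{u_{t,s}}^n$ is constant, with $\dot u_{t,s}$ existing $\theta_{u_{t,s}}^n$-a.e. The key geometric observation is that on the contact set $\{u = v\}$, one has $u_s = v$, and since $u_{t,s}$ is convex in $t$ and squeezed between $u_s$ and $v$, we conclude that $u_{t,s} \equiv v$ on $\{u=v\}$ for every $t$; in particular $\dot u_{t,s} \equiv 0$ there. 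Following the argument of \cite[Theorem 5.1]{DNL21}, splitting the integration over $\{u = v\}$ and its complement and exploiting the MA decomposition above together with $\int_X \theta_{u_s}^n = \vol(\theta)$, one obtains the mass identity
\[
\int_{\{u=v\}}\theta_u^n \;=\; \int_{\{u=v\}}\theta_{u_s}^n \;=\; \int_{\{u=v\}}\theta_v^n,
\]
after letting $s \to 0^+$ (note $\{u>v-s\}\searrow \{u=v\}$ since $u\le v$).

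To upgrade the mass equality to the measure equality, I would invoke the one-sided inequality $\mathbf{1}_{\{u=v\}}\theta_u^n \ge \mathbf{1}_{\{u=v\}}\theta_v^n$ valid whenever $u \le v$ (a consequence of the plurifine max-formula applied to $\max(u,v) = v$, in its version for non-pluripolar products). Combined with the mass identity, this promotes to the sought pointwise identity $\mathbf{1}_{\{u=v\}}\theta_u^n = \mathbf{1}_{\{u=v\}}\theta_v^n$.

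The main technical obstacle I anticipate is the verification in the first step that $u_s \in \Ent(X,\theta)$ without any a priori entropy control on $\theta_u^n$; this demands a careful use of the decomposition for $\theta_{u_s}^n$ combined with finite entropy of $\theta_v^n$, possibly via a further truncation $\max(u, V_\theta - M)$ and a limit $M \to \infty$. A secondary delicate point is the passage from the Finsler constancy to the mass identity on the contact set, which has to be performed in the big setting where $\dot u_{t,s}$ is only almost-everywhere defined with respect to $\theta_{u_{t,s}}^n$; the vanishing of $\dot u_{t,s}$ on $\{u=v\}$ identified above is what ultimately cleanly separates the contact-set contribution from its complement.
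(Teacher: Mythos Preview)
Your overall strategy is the paper's: reduce to bounded difference via $u_s=\max(u,v-s)$, run the geodesic $u_{t,s}$ from $v$ to $u_s$, observe $u_{t,s}\equiv v$ on $\{u=v\}$, extract a mass identity on the contact set, and upgrade it to the measure identity using $\mathbf{1}_{\{u=v\}}\theta_u^n\geq \mathbf{1}_{\{u=v\}}\theta_v^n$. The paper itself says nothing more than ``repeat \cite[Theorem~5.1]{DNL21} word for word,'' and your plurifine identity $\mathbf{1}_{\{u=v\}}\theta_{u_s}^n=\mathbf{1}_{\{u=v\}}\theta_u^n$ is correct.

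There is, however, a genuine gap that you yourself single out: the assertion $u_s\in\Ent(X,\theta)$ is false in general and cannot be repaired by the workarounds you propose. Your decomposition $\theta_{u_s}^n=\mathbf{1}_{\{u>v-s\}}\theta_u^n+\mathbf{1}_{\{u\leq v-s\}}\theta_v^n$ is right, but the first summand has no reason to be absolutely continuous with respect to $\omega^n$, since $u$ is an \emph{arbitrary} $\theta$-psh function; bounding its total mass by $\vol(\theta)$ gives no entropy control whatsoever, and replacing $u$ by $\max(u,V_\theta-M)$ does nothing to produce a density. Consequently Theorem~\ref{thm:Geodesic_Formula2}, which requires \emph{both} endpoints in $\Ent(X,\theta)$, cannot be applied to the pair $(v,u_s)$ as you do. In \cite[Theorem~5.1]{DNL21} the finite-entropy hypothesis is deliberately asymmetric: it is placed on $v$ only, and is exploited at the endpoint $t=0$ where $\theta_{u_{0,s}}^n=\theta_v^n$ has a density, together with the linearity of $E$ along the geodesic (which needs only $\mathcal E^1$, not $\Ent$). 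Go back to the actual argument in \cite{DNL21} and trace exactly where $v\in\Ent$ enters; you will see that no entropy bound on $u_s$ is ever used.
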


%Thanks to Corollary 3.4 of  \cite{DNT21}, and Proposition \ref{env} 

Following again word by word the proof of \cite[Corollary 5.2]{DNL21} we obtain:

\begin{corollary}\label{thm: domination}
Assume \eqref{UK}. Assume $u \in \PSH(X,\theta)$, $v\in \Ent(X,\theta)$ and $u = v$ $\theta^n_v$-almost everywhere, then $u = v.$ 
\end{corollary}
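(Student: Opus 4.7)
The plan is to mirror the argument of \cite[Corollary 5.2]{DNL21}, using the newly available big-class contact-set identity (Proposition \ref{thm: MA contact big}) as the main input. The proof proceeds in three steps: first, upgrade the measure-theoretic hypothesis to the pointwise inequality $u \leq v$; second, use the contact-set identity to deduce $\theta_u^n = \theta_v^n$ and that $u \in \mathcal{E}(X,\theta)$; third, conclude via uniqueness of non-pluripolar Monge--Amp\`ere solutions in $\mathcal{E}(X,\theta)$.

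For Step 1, since $u = v$ holds $\theta_v^n$-a.e., in particular $u \leq v$ holds $\theta_v^n$-a.e., and because $v \in \Ent(X,\theta) \subseteq \mathcal{E}(X,\theta)$ the standard domination principle for full-mass potentials in big classes (see e.g.\ \cite[Lemma 2.9]{DDL5}, which was also used at the end of the proof of Lemma \ref{u,u_k}) upgrades this to $u \leq v$ everywhere on $X$. For Step 2, applying Proposition \ref{thm: MA contact big} to the pair $(u,v)$ yields
\[
\mathbf{1}_{\{u=v\}}\theta_u^n \;=\; \mathbf{1}_{\{u=v\}}\theta_v^n.
\]
Since $\theta_v^n(\{u<v\})=0$, the right-hand side is all of $\theta_v^n$, so $\theta_u^n \geq \theta_v^n$ as non-pluripolar measures. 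Taking total masses and using that $\int_X \theta_u^n \leq \vol(\theta)$ always holds forces $u \in \mathcal{E}(X,\theta)$ and in turn $\theta_u^n = \theta_v^n$.

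For Step 3 I would invoke the BEGZ uniqueness result \cite[Theorem 3.3]{BEGZ10}: two elements of $\mathcal{E}(X,\theta)$ with the same non-pluripolar Monge--Amp\`ere measure differ by an additive constant. Therefore $u = v + c$; the inequality $u\leq v$ gives $c\leq 0$, and since the non-zero measure $\theta_v^n$ is concentrated on $\{u=v\}$, this set is non-empty, forcing $c=0$ and hence $u=v$. The main substantive obstacle lies already in Proposition \ref{thm: MA contact big} (which is where \eqref{UK} and Theorem \ref{thm:Geodesic_Formula2} enter); the present corollary is a purely formal consequence, the only delicate point being that $u$ is only a priori $\theta$-psh, which is why the preliminary domination step is needed before BEGZ uniqueness can be applied.
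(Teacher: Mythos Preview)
Your three-step strategy (domination to get $u\leq v$, contact-set identity to get $\theta_u^n=\theta_v^n$, then uniqueness) is exactly the argument of \cite[Corollary 5.2]{DNL21}, which is what the paper invokes verbatim. There is one inaccuracy worth fixing: under Definition~\ref{def: entropy} the class $\Ent(X,\theta)$ is \emph{not} contained in $\mathcal{E}(X,\theta)$ --- only $m_v>0$ is required, and the paper explicitly allows non-full-mass potentials. This does not affect Step~1 (the domination principle needs only $m_v>0$; note however that \cite[Lemma 2.9]{DDL5} is the energy-equality criterion, not the domination principle), but it does affect Steps~2--3: from $\theta_u^n\geq\theta_v^n$ you cannot conclude $u\in\mathcal{E}(X,\theta)$. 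Instead, combine $\theta_u^n\geq\theta_v^n$ with the mass-monotonicity inequality $\int_X\theta_u^n\leq\int_X\theta_v^n$ coming from $u\leq v$ (\cite[Theorem 1.2]{WN19}) to obtain $\theta_u^n=\theta_v^n$ and $u\in\mathcal{E}(X,\theta,P_\theta[v])$; then in Step~3 use relative uniqueness \cite[Theorem 3.13]{DDL6} rather than the full-mass BEGZ version. With these adjustments your argument goes through.
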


\section{Geodesic Distance}\label{sec geo dis}
In \cite{Gup23b}, Gupta showed how $\cE^p(X,\theta)$ is naturally endowed with a complete distance $d_p$. We briefly recall how such distance is constructed. Let  $\phi_k$ be a monotone transcendental Fujita approximation of $\{\theta\}$ (Lemma \ref{lem:Fujita}) and consider $\mathbf{L}_k: \PSH(X,\theta,\phi_k)\to \PSH(Y_k,\eta_k)$ be the map given by Lemma \ref{lem:Bijection}. Then each $\cE^p(X,\theta,\phi_k)$ can be endowed with a metric $d_p$. More precisely,
\begin{equation}\label{def dist big}
d_p(u_0,u_1):=d_p\left(\mathbf{L}_k(u_0),\mathbf{L}_k(u_1)\right)
\end{equation}
for any $u_0,u_1\in \cE^p(X,\theta,\phi_k)$ where the $d_p$-distance on $\cE^p(Y_k,\eta_k)$ is defined in \cite{DNL20}. Moreover, as proved in \cite[Theorem 7.4]{Gup23b} the $d_p$ distance on $\cE^p(X,\theta)$ satisfies
\begin{equation*}
    d_p(u_0,u_1)=\lim_{k\rightarrow +\infty} d_p\left(P_\theta[\phi_k](u_0),P_\theta[\phi_k](u_1)\right)
\end{equation*}
for any $u_0,u_1\in\cE^p(X,\theta)$.

\medskip

We prove here that the $d_p$ distance on $\cE^p(X,\theta)$ has an explicit expression when the potentials have the same singularity type and finite entropy. The following result extends \cite[Theorem 1.2]{DNL21} to the case of big cohomology classes.
\begin{thm}\label{thm dist big}
Assume \eqref{UK}. Fix $p\geq 1$. Let $u_0,u_1\in \Ent(X,\theta)\cap \cE^p(X,\theta)$ and $u_t$ be the psh geodesic connecting $u_0$ and $u_1$. If $u_0-u_1$ is bounded then
    \begin{equation}\label{eqn:Distance_Formula}
        d_p^p(u_0,u_1)=\int_X | \dot{u}_t|^p \,\theta_{u_t}^n
    \end{equation}
    for any $t\in [0,1]$.
\end{thm}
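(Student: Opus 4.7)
The strategy is to reduce to the big and nef case on each model $Y_k$, where the analogous formula \cite[Theorem 1.2]{DNL21} already holds, and then to pass to the limit along a monotone transcendental Fujita approximation using the tools developed in the proof of Theorem \ref{thm:Geodesic_Formula2}.

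Fix a monotone transcendental Fujita approximation $(\phi_k)_k$ (Lemma \ref{lem:Fujita}) and set $u_{i,k}:=P_\theta[\phi_k](u_i)$, $i=0,1$. Denote by $u_{t,k}$ the psh geodesic joining $u_{0,k}$ and $u_{1,k}$, and by $v_{i,k}:=\mathbf{L}_k(u_{i,k})\in\PSH(Y_k,\eta_k)$, $v_{t,k}:=\mathbf{L}_k(u_{t,k})$ the images under the bijection of Lemma \ref{lem:Bijection}. By Lemma \ref{lem:Bijection}(ii), $v_{t,k}$ is the psh geodesic joining $v_{0,k},v_{1,k}$. Furthermore, Lemma \ref{u,u_k}(iv)--(v) yields that $v_{0,k},v_{1,k}\in \Ent(Y_k,\eta_k)\cap \mathcal{E}^p(Y_k,\eta_k)$ (cf.\ Lemmas \ref{lem:Bijection}(iii), \ref{lem:Holo}) with $\lVert v_{0,k}-v_{1,k}\rVert_\infty \leq \lVert u_0-u_1\rVert_\infty$.

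Since $\{\eta_k\}$ is big and nef on the compact K\"ahler manifold $Y_k$, the K\"ahler version of the distance formula \cite[Theorem 1.2]{DNL21} applies to give
\begin{equation*}
d_p^p(v_{0,k},v_{1,k})=\int_{Y_k}|\dot v_{t,k}|^p\,\eta_{k,v_{t,k}}^n,\qquad t\in[0,1].
\end{equation*}
By the very definition of the big $d_p$ distance \eqref{def dist big}, the left-hand side equals $d_p^p(u_{0,k},u_{1,k})$. The right-hand side, via the pluripolar-negligibility change of variables computed in \eqref{eqn:p_Finsler} inside the proof of Theorem \ref{thm:Geodesic_Formula2}, coincides with $\int_X |\dot u_{t,k}|^p\,\theta_{u_{t,k}}^n$. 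Thus
\begin{equation*}
d_p^p(u_{0,k},u_{1,k})=\int_X |\dot u_{t,k}|^p\,\theta_{u_{t,k}}^n.
\end{equation*}

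It remains to let $k\to\infty$ on both sides. For the left-hand side, Gupta's consistency theorem \cite[Theorem 7.4]{Gup23b} recalled just before the statement gives $d_p(u_{0,k},u_{1,k})\to d_p(u_0,u_1)$. For the right-hand side, we invoke the convergence \eqref{eqn:Useful_Later} already established in the proof of Theorem \ref{thm:Geodesic_Formula2}:
\begin{equation*}
\int_X |\dot u_{t,k}|^p\,\theta_{u_{t,k}}^n\;\longrightarrow\;\int_X |\dot u_t|^p\,\theta_{u_t}^n,
\end{equation*}
which relies on the uniform entropy control along $u_{t,k}$ (Corollary \ref{cor: bound entropy geo} and Lemma \ref{u,u_k}(v)) guaranteed by Condition A. Combining these two limits yields \eqref{eqn:Distance_Formula}; the fact that the right-hand side does not depend on $t$ is part of Theorem \ref{thm:Geodesic_Formula2}.

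The only genuine obstacle is ensuring the validity of the two convergences as $k\to\infty$ under the mere assumption $H<+\infty$; both have, however, already been carried out in the previous section, so the proof is essentially a direct assembly of \cite[Theorem 1.2]{DNL21}, Lemma \ref{lem:Bijection}, \cite[Theorem 7.4]{Gup23b} and Theorem \ref{thm:Geodesic_Formula2}.
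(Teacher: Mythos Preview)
Your proof is correct and follows essentially the same route as the paper: reduce to the big and nef case on $Y_k$ via $\mathbf{L}_k$, apply \cite[Theorem 1.2]{DNL21} together with \eqref{eqn:p_Finsler} to obtain $d_p^p(u_{0,k},u_{1,k})=\int_X|\dot u_{t,k}|^p\theta_{u_{t,k}}^n$, and then pass to the limit using \cite[Theorem 7.4]{Gup23b} on the left and \eqref{eqn:Useful_Later} on the right. The only difference is that you spell out a few more intermediate justifications (e.g.\ Lemma \ref{lem:Bijection}(ii)--(iii), Lemma \ref{u,u_k}(iv)--(v)) than the paper does.
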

\noindent We recall that the right term in \eqref{eqn:Distance_Formula} is independent of $0\leq t\leq 1$ thanks to Theorem \ref{thm:Geodesic_Formula2}.
\begin{proof}
 Using the same notations of above we set $u_{i,k}:=P_\theta[\phi_k](u_i)$ for $i=0,1$ and $v_{i,k}:=\mathbf{L}_k(u_{i,k})$. We also denote by $u_{t,k},v_{t,k}$ the psh geodesics joining $u_{0,k},u_{1,k}$ and $v_{0,k},v_{1,k}$ respectively. Combining \cite[Theorem 1.2]{DNL21} with \eqref{eqn:p_Finsler} we have
    $$
    d_p^p(u_{0,k},u_{1,k})=d_p^p(v_{0,k},v_{1,k})=\int_{Y_k}\lvert \dot{v}_{t,k}\rvert^p(\eta_k+dd^cv_{t,k})^n=\int_X \lvert \dot{u}_{t,k} \rvert^p\theta_{u_{t,k}}^n,
    $$
    where we observe that the first identity follows from the definition given in \eqref{def dist big}.
    It follows from \eqref{eqn:Useful_Later} that
    $$
    d_p^p(u_0,u_1)=\lim_{k\rightarrow +\infty} d_p^p(u_{0,k},u_{1,k})=\lim_{k\rightarrow +\infty} \int_X \lvert \dot{u}_{t,k} \rvert^p\theta_{u_{t,k}}^n=\int_X \lvert \dot{u}_t \rvert^p\theta_{u_t}^n.
    $$
 This concludes the proof.
\end{proof}

 \bibliographystyle{plain}
\bibliography{biblio.bib}

\begin{thebibliography}{10}

\bibitem{BT82}
E.~Bedford and B.~A. Taylor.
\newblock A new capacity for plurisubharmonic functions.
\newblock {\em Acta Math.}, 149(1-2):1--40, 1982.

\bibitem{BT87}
E.~Bedford and B.~A. Taylor.
\newblock Fine topology, \v{S}ilov boundary, and {$(dd^c)^n$}.
\newblock {\em J. Funct. Anal.}, 72(2):225--251, 1987.

\bibitem{BB17}
R.~J. Berman and B.~Berndtsson.
\newblock Convexity of the {$K$}-energy on the space of {K}\"{a}hler metrics
  and uniqueness of extremal metrics.
\newblock {\em J. Amer. Math. Soc.}, 30(4):1165--1196, 2017.

\bibitem{BBEGZ19}
R.~J. {Berman}, S.~{Boucksom}, P.~{Eyssidieux}, V.~{Guedj}, and A.~{Zeriahi}.
\newblock {K\"ahler-Einstein metrics and the K\"ahler-Ricci flow on log Fano
  varieties}.
\newblock {\em {J. Reine Angew. Math.}}, 751:27--89, 2019.

\bibitem{BBGZ13}
R.~J. Berman, S.~Boucksom, V.~Guedj, and A.~Zeriahi.
\newblock A variational approach to complex {M}onge-{A}mp\`ere equations.
\newblock {\em Publ. Math. Inst. Hautes \'{E}tudes Sci.}, 117:179--245, 2013.

\bibitem{BBJ15}
R.~J. Berman, S.~Boucksom, and M.~Jonsson.
\newblock A variational approach to the {Y}au-{T}ian-{D}onaldson conjecture.
\newblock {\em J. Amer. Math. Soc.}, 34(3):605--652, 2021.

\bibitem{BDL20}
R.~J. Berman, T.~Darvas, and C.~H. Lu.
\newblock Regularity of weak minimizers of the {K}-energy and applications to
  properness and {K}-stability.
\newblock {\em Ann. Sci. \'{E}c. Norm. Sup\'{e}r. (4)}, 53(2):267--289, 2020.

\bibitem{BouThesis}
S.~Boucksom.
\newblock {\em C{\^o}nes positifs des vari{\'e}t{\'e}s complexes compactes}.
\newblock PhD thesis, Universit{\'e} Joseph-Fourier-Grenoble I, 2002.

\bibitem{Bou04}
S.~Boucksom.
\newblock Divisorial {Z}ariski decompositions on compact complex manifolds.
\newblock {\em Ann. Sci. \'{E}cole Norm. Sup. (4)}, 37(1):45--76, 2004.

\bibitem{BEGZ10}
S.~Boucksom, P.~Eyssidieux, V.~Guedj, and A.~Zeriahi.
\newblock Monge-{A}mp\`ere equations in big cohomology classes.
\newblock {\em Acta Math.}, 205(2):199--262, 2010.

\bibitem{BFJ08}
S.~Boucksom, C.~Favre, and M.~Jonsson.
\newblock Valuations and plurisubharmonic singularities.
\newblock {\em Publications of the Research Institute for Mathematical
  Sciences}, 44(2):449--494, 2008.

\bibitem{BJ18}
S.~Boucksom and M.~Jonsson.
\newblock A non-{A}rchimedean approach to {K}-stability.
\newblock {\em arXiv preprint arXiv:1805.11160}, 2018.

\bibitem{Chen00}
X.-X. Chen.
\newblock The space of {K}\"{a}hler metrics.
\newblock {\em J. Differential Geom.}, 56(2):189--234, 2000.

\bibitem{ChCh1}
X.-X. Chen and J.~Cheng.
\newblock On the constant scalar curvature {K}\"{a}hler metrics ({I})---{A}
  priori estimates.
\newblock {\em J. Amer. Math. Soc.}, 34(4):909--936, 2021.

\bibitem{ChCh2}
X.-X. Chen and J.~Cheng.
\newblock On the constant scalar curvature {K}\"{a}hler metrics
  ({II})---{E}xistence results.
\newblock {\em J. Amer. Math. Soc.}, 34(4):937--1009, 2021.

\bibitem{CT15}
T.~C. Collins and V.~Tosatti.
\newblock K\"ahler currents and null loci.
\newblock {\em Invent. Math.}, 202(3):1167--1198, 2015.

\bibitem{CT16}
T.~C. Collins and V.~Tosatti.
\newblock Restricted volumes on {K}\"ahler manifolds.
\newblock {\em Ann. Fac. Sci. Toulouse Math. (6)}, 31(3):907--947, 2022.

\bibitem{Dar17AJM}
T.~Darvas.
\newblock The {M}abuchi completion of the space of {K}\"{a}hler potentials.
\newblock {\em Amer. J. Math.}, 139(5):1275--1313, 2017.

\bibitem{DDL3}
T.~Darvas, E.~Di~Nezza, and C.~H. Lu.
\newblock {$L^1$} metric geometry of big cohomology classes.
\newblock {\em Ann. Inst. Fourier (Grenoble)}, 68(7):3053--3086, 2018.

\bibitem{DDL2}
T.~Darvas, E.~Di~Nezza, and C.~H. Lu.
\newblock Monotonicity of nonpluripolar products and complex {M}onge-{A}mp\`ere
  equations with prescribed singularity.
\newblock {\em Anal. PDE}, 11(8):2049--2087, 2018.

\bibitem{DDL1}
T.~Darvas, E.~Di~Nezza, and C.~H. Lu.
\newblock On the singularity type of full mass currents in big cohomology
  classes.
\newblock {\em Compos. Math.}, 154(2):380--409, 2018.

\bibitem{DDL4}
T.~Darvas, E.~Di~Nezza, and C.~H. Lu.
\newblock Log-concavity of volume and complex {M}onge-{A}mp\`ere equations with
  prescribed singularity.
\newblock {\em Math. Ann.}, 379(1-2):95--132, 2021.

\bibitem{DDL6}
T.~Darvas, E.~Di~Nezza, and C.~H. Lu.
\newblock {Relative pluripotential theory on compact Kähler manifolds}.
\newblock {\em arXiv:2303.11584, to appear in Pure and Applied Mathematics
  Quarterly}, 2023.

\bibitem{DDL5}
T.~Darvas, E.~Di~Nezza, and C.H. Lu.
\newblock The metric geometry of singularity types.
\newblock {\em J. Reine Angew. Math.}, 771:137--170, 2021.

\bibitem{DL12}
T.~Darvas and L.~Lempert.
\newblock Weak geodesics in the space of {K}\"{a}hler metrics.
\newblock {\em Math. Res. Lett.}, 19(5):1127--1135, 2012.

\bibitem{DR17}
T.~Darvas and Y.~A. Rubinstein.
\newblock Tian's properness conjectures and {F}insler geometry of the space of
  {K}\"{a}hler metrics.
\newblock {\em J. Amer. Math. Soc.}, 30(2):347--387, 2017.

\bibitem{Dem85}
J.-P. Demailly.
\newblock Mesures de {M}onge-{A}mp\`ere et caract\'{e}risation
  g\'{e}om\'{e}trique des vari\'{e}t\'{e}s alg\'{e}briques affines.
\newblock {\em M\'{e}m. Soc. Math. France (N.S.)}, (19):124, 1985.

\bibitem{Dem92}
J.-P. Demailly.
\newblock Regularization of closed positive currents and intersection theory.
\newblock {\em J. Algebraic Geom.}, 1(3):361--409, 1992.

\bibitem{DemAGbook}
J.-P. Demailly.
\newblock {\em {Complex Analytic and Differential Geometry}}.
\newblock Demailly's webpage, 2012.

\bibitem{DP04}
J.-P. Demailly and M.~P\u{a}un.
\newblock Numerical characterization of the {K}\"{a}hler cone of a compact
  {K}\"{a}hler manifold.
\newblock {\em Ann. of Math. (2)}, 159(3):1247--1274, 2004.

\bibitem{DNFT17}
E.~Di~Nezza, E.~Floris, and S.~Trapani.
\newblock Divisorial {Z}ariski decomposition and some properties of full mass
  currents.
\newblock {\em Ann. Sc. Norm. Super. Pisa Cl. Sci. (5)}, 17(4):1383--1396,
  2017.

\bibitem{DNG18}
E.~Di~Nezza and V.~Guedj.
\newblock Geometry and topology of the space of {K}\"{a}hler metrics on
  singular varieties.
\newblock {\em Compos. Math.}, 154(8):1593--1632, 2018.

\bibitem{DNGG20}
E.~Di~Nezza, V.~Guedj, and H.~Guenancia.
\newblock Families of singular {K}\"ahler-{E}instein metrics.
\newblock {\em J. Eur. Math. Soc. (JEMS)}, 25(7):2697--2762, 2023.

\bibitem{DGL20}
E.~Di~Nezza, V.~Guedj, and C.~H. Lu.
\newblock Finite entropy vs finite energy.
\newblock {\em Comment. Math. Helv.}, 96(2):389--419, 2021.

\bibitem{DNL20}
E.~Di~Nezza and C.~H. Lu.
\newblock {$L^p$} metric geometry of big and nef cohomology classes.
\newblock {\em Acta Math. Vietnam.}, 45(1):53--69, 2020.

\bibitem{DNL21}
E.~Di~Nezza and C.~H. Lu.
\newblock Geodesic distance and {M}onge-{A}mp\`ere measures on contact sets.
\newblock {\em Anal. Math.}, 48(2):451--488, 2022.

\bibitem{DNT23}
E.~Di~Nezza and S.~Trapani.
\newblock The regularity of envelopes.
\newblock {\em accepted in Annales scientifiques de l'ENS.}

\bibitem{DNT21}
E.~Di~Nezza and S.~Trapani.
\newblock Monge-{A}mp\`ere measures on contact sets.
\newblock {\em Math. Res. Lett.}, 28(5):1337--1352, 2021.

\bibitem{DTT23}
E.~Di~Nezza, S.~Trapani, and A.~Trusiani.
\newblock Entropy for {M}onge-{A}mp\`ere measures in the prescribed
  singularities setting.
\newblock {\em SIGMA Symmetry Integrability Geom. Methods Appl.}, 20:Paper No.
  039, 19, 2024.

\bibitem{Fuj94}
T.~Fujita.
\newblock Approximating {Z}ariski decomposition of big line bundles.
\newblock {\em Kodai Math. J.}, 17(1):1--3, 1994.

\bibitem{GH}
P.~Griffiths and J.~Harris.
\newblock {\em Principles of algebraic geometry}.
\newblock Pure and Applied Mathematics. Wiley-Interscience [John Wiley \&
  Sons], New York, 1978.

\bibitem{GZh15}
Q.~Guan and X.~Zhou.
\newblock A proof of {D}emailly's strong openness conjecture.
\newblock {\em Ann. of Math. (2)}, 182(2):605--616, 2015.

\bibitem{GZ05}
V.~Guedj and A.~Zeriahi.
\newblock Intrinsic capacities on compact {K}\"{a}hler manifolds.
\newblock {\em J. Geom. Anal.}, 15(4):607--639, 2005.

\bibitem{GZ07}
V.~Guedj and A.~Zeriahi.
\newblock The weighted {M}onge-{A}mp\`ere energy of quasiplurisubharmonic
  functions.
\newblock {\em J. Funct. Anal.}, 250(2):442--482, 2007.

\bibitem{Gup23b}
P.~Gupta.
\newblock Complete geodesic metrics in big classes.
\newblock {\em arXiv preprint arXiv:2401.01688}, 2024.

\bibitem{Hir64}
H.~Hironaka.
\newblock Resolution of singularities of an algebraic variety over a field of
  characteristic zero. {I}, {II}.
\newblock {\em Ann. of Math. (2) {\bf 79} (1964), 109--203; ibid. (2)},
  79:205--326, 1964.

\bibitem{Kol98}
S.~Ko{\l}odziej.
\newblock The complex {M}onge-{A}mp\`ere equation.
\newblock {\em Acta Math.}, 180(1):69--117, 1998.

\bibitem{LV13}
L.~Lempert and L.~Vivas.
\newblock Geodesics in the space of {K}\"{a}hler metrics.
\newblock {\em Duke Math. J.}, 162(7):1369--1381, 2013.

\bibitem{Li22}
C.~Li.
\newblock {$G$}-uniform stability and {K}\"ahler-{E}instein metrics on {F}ano
  varieties.
\newblock {\em Invent. Math.}, 227(2):661--744, 2022.

\bibitem{Li23}
C.~Li.
\newblock K-stability and {F}ujita approximation.
\newblock In {\em Birational geometry, {K}\"{a}hler--{E}instein metrics and
  degenerations}, volume 409 of {\em Springer Proc. Math. Stat.}, pages
  545--566. Springer, Cham, [2023] \copyright 2023.

\bibitem{LTW21}
C.~Li, G.~Tian, and F.~Wang.
\newblock On the {Y}au-{T}ian-{D}onaldson conjecture for singular {F}ano
  varieties.
\newblock {\em Comm. Pure Appl. Math.}, 74(8):1748--1800, 2021.

\bibitem{Mab86}
T.~{Mabuchi}.
\newblock {K-energy maps integrating Futaki invariants}.
\newblock {\em {Tohoku Math. J. (2)}}, 38(1-2):575--593, 1986.

\bibitem{Mab87}
T.~Mabuchi.
\newblock Some symplectic geometry on compact {K}\"{a}hler manifolds. {I}.
\newblock {\em Osaka J. Math.}, 24(2):227--252, 1987.

\bibitem{PT24}
C.-M. Pan and T.~D. T{\^o}.
\newblock Singular weighted csck metrics on {K}ähler varieties.
\newblock {\em in preparation}, 2024.

\bibitem{PTT23}
C.-M. Pan, T.~D. T{\^o}, and A.~Trusiani.
\newblock Singular csc{K} metrics on smoothable varieties.
\newblock {\em arXiv preprint arXiv:2312.13653}, 2023.

\bibitem{Siu74}
Y.~T. Siu.
\newblock Analyticity of sets associated to {L}elong numbers and the extension
  of closed positive currents.
\newblock {\em Invent. Math.}, 27:53--156, 1974.

\bibitem{Tian94}
G.~Tian.
\newblock The {K}-energy on hypersurfaces and stability.
\newblock {\em Comm. Anal. Geom.}, (2):239--265, 1994.

\bibitem{Tru1}
A.~Trusiani.
\newblock K\"{a}hler-{E}instein metrics with prescribed singularities on {F}ano
  manifolds.
\newblock {\em J. Reine Angew. Math.}, 793:1--57, 2022.

\bibitem{Trus19}
A.~Trusiani.
\newblock {$L^1$} metric geometry of potentials with prescribed singularities
  on compact {K}\"{a}hler manifolds.
\newblock {\em J. Geom. Anal.}, 32(2):Paper No. 37, 37, 2022.

\bibitem{Trus20b}
A.~Trusiani.
\newblock Continuity method with movable singularities for classical complex
  {M}onge-{A}mp\`ere equations.
\newblock {\em Indiana Univ. Math. J.}, 72(4):1577--1625, 2023.

\bibitem{Trus20}
A.~Trusiani.
\newblock The strong topology of {$\omega$}-plurisubharmonic functions.
\newblock {\em Anal. PDE}, 16(2):367--405, 2023.

\bibitem{WN19}
D.~Witt~Nystr\"{o}m.
\newblock Monotonicity of non-pluripolar {M}onge-{A}mp\`ere masses.
\newblock {\em Indiana Univ. Math. J.}, 68(2):579--591, 2019.

\bibitem{Xia19b}
M.~Xia.
\newblock Mabuchi geometry of big cohomology classes.
\newblock {\em J. Reine Angew. Math.}, 798:261--292, 2023.

\bibitem{Zar62}
O.~Zariski.
\newblock The theorem of {R}iemann-{R}och for high multiples of an effective
  divisor on an algebraic surface.
\newblock {\em Ann. of Math. (2)}, 76:560--615, 1962.

\end{thebibliography}
 \bigskip

\noindent{\sc IMJ-PRG, Sorbonne Universit\'e \& \\ DMA, École normale supérieure, Université PSL, CNRS, \\
 4 place Jussieu \& 45 Rue d'Ulm, 75005 Paris, France,\\
 \tt {eleonora.dinezza@imj-prg.fr, edinezza@dma.ens.fr}}
\bigskip
  
  \noindent{\sc Universit\`a di Roma TorVergata, \\
  Via della Ricerca Scientifica 1, 00133, Roma, Italy, \\
  \tt{trapani@mat.uniroma2.it}}

  \bigskip
  \noindent{\sc Chalmers University of technology,\\
  Chalmers tvärgata 3, 412 96 Gothenburg, Sweden, \\
  \tt{trusiani@chalmers.se}}

\end{document}